\newtheorem*{thm*}{Theorem}
\newtheorem{thm}{Theorem}[section]
\newtheorem{cor}[thm]{Corollary}
\newtheorem{lem}[thm]{Lemma}
\newtheorem{prop}[thm]{Proposition}
\theoremstyle{definition}
\newtheorem{defn}[thm]{Definition}
\theoremstyle{remark}
\numberwithin{equation}{section}
\DeclareSymbolFont{bbold}{U}{bbold}{m}{n}
\DeclareSymbolFontAlphabet{\mathbbold}{bbold}
\newcommand{\N}{\mathbb{N}}
\newcommand{\Z}{\mathbb{Z}}
\newcommand{\R}{\mathbb{R}}
\newcommand{\id}{\mathrm{id}}
\newcommand{\acts}{\curvearrowright}
\newcommand{\Stab}{\mathrm{Stab}}
\newcommand{\pmp}{p{$.$}m{$.$}p{$.$}}
\newcommand{\dbar}{d}
\newcommand{\dHam}{d_{\mathrm{Ham}}}
\newcommand{\pv}{\bar{p}}
\newcommand{\qv}{\bar{q}}
\newcommand{\cF}{\mathcal{F}}
\newcommand{\cJ}{\mathscr{I}}
\newcommand{\cP}{\mathcal{P}}
\newcommand{\cQ}{\mathcal{Q}}
\newcommand{\nm}{\mathcal{N}}
\newcommand{\dom}{\mathrm{dom}}
\newcommand{\rng}{\mathrm{rng}}
\newcommand{\salg}{\sigma \text{-}\mathrm{alg}}
\newcommand{\ralg}{\sigma \text{-}\mathrm{alg}^{\text{red}}}
\newcommand{\sH}{\mathrm{H}}
\newcommand{\rh}{h^{\mathrm{Rok}}}
\newcommand{\Int}[2]{\mathcal{I}_{#1}(#2)}
\newcommand{\Bnd}[2]{\partial_{#1}(#2)}
\newcommand{\Borel}{\mathcal{B}}
\newcommand{\ksh}{h^{\mathrm{KS}}}
\renewcommand{\:}{\,:\,}
\newcommand{\res}{\restriction}
\begin{document}

\title[Krieger's finite generator theorem for countable groups I]{Krieger's finite generator theorem for actions of countable groups I}
\author{Brandon Seward}
\address{Courant Institute of Mathematical Sciences, New York University, 251 Mercer Street, New York, NY 10003, U.S.A.}
\email{b.m.seward@gmail.com}
\keywords{generating partition, finite generator, Krieger's finite generator theorem, entropy, sofic entropy, f-invariant, non-amenable}
\subjclass[2010]{37A15, 37A35}

\begin{abstract}
For an ergodic {\pmp} action $G \acts (X, \mu)$ of a countable group $G$, we define the Rokhlin entropy $\rh_G(X, \mu)$ to be the infimum of the Shannon entropies of countable generating partitions. It is known that for free ergodic actions of amenable groups this notion coincides with classical Kolmogorov--Sinai entropy. It is thus natural to view Rokhlin entropy as a close analogue to classical entropy. Under this analogy we prove that Krieger's finite generator theorem holds for all countably infinite groups. Specifically, if $\rh_G(X, \mu) < \log(k)$ then there exists a generating partition consisting of $k$ sets.
\end{abstract}
\maketitle

\section{Introduction}

Let $(X, \mu)$ be a standard probability space, meaning $X$ is a standard Borel space with Borel $\sigma$-algebra $\Borel(X)$ and $\mu$ is a Borel probability measure. Let $G$ be a countable group, and let $G \acts (X, \mu)$ be a probability-measure-preserving ({\pmp}) action. For a collection $\xi \subseteq \Borel(X)$, we let $\salg_G(\xi)$ denote the smallest $G$-invariant $\sigma$-algebra containing $\xi$. A countable Borel partition $\alpha$ is \emph{generating} if $\salg_G(\alpha) = \Borel(X)$ (equality modulo $\mu$-null sets). The \emph{Shannon entropy} of a countable Borel partition $\alpha$ is
$$\sH(\alpha) = \sum_{A \in \alpha} - \mu(A) \cdot \log(\mu(A)).$$
A \emph{probability vector} is a finite or countable ordered tuple $\pv = (p_i)$ of positive real numbers which sum to $1$ (a more general definition will appear in Section \ref{SECT PRELIM}). We write $|\pv|$ for the length of $\pv$ and $\sH(\pv) = \sum - p_i \cdot \log(p_i)$ for the Shannon entropy of $\pv$.

Countable Borel partitions, and generating partitions in particular, have long played an important role in classical entropy theory. Generating partitions greatly simplify the definition and computation of Kolmogorov--Sinai entropy, and the proofs of two of the most well known results in entropy theory, Sinai's factor theorem and Ornstein's isomorphism theorem, relied upon deep, intricate constructions in which partitions played a starring role. Furthermore, generating partitions are more than merely a tool in entropy theory, but in fact are intimately connected with the notion of entropy itself. This fact is demonstrated by the following fundamental theorems of Rokhlin and Krieger.

\begin{thm*}[Rokhlin's generator theorem \cite{Roh67}, 1967]
If $\Z \acts (X, \mu)$ is a free ergodic {\pmp} action then its Kolmogorov--Sinai entropy $\ksh_\Z(X, \mu)$ satisfies
$$\ksh_\Z(X, \mu) = \inf \Big\{ \sH(\alpha) \: \alpha \text{ is a countable generating partition} \Big\}.$$
\end{thm*}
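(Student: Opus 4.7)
The proof splits into two inequalities. The easy direction $h_\Z(X,\mu) \leq \sH(\alpha)$ for every countable generating partition $\alpha$ is immediate from the Kolmogorov--Sinai theorem: one has $h_\Z(X,\mu) = h_\Z(X,\mu,\alpha) \leq \sH(\alpha)$, and taking the infimum gives the bound. For the reverse direction, I will assume $h := h_\Z(X,\mu) < \infty$ (the other case is trivial) and, given $\epsilon > 0$, construct a generating partition $\alpha$ with $\sH(\alpha) \leq h + \epsilon$. The guiding idea is to use a Rokhlin tower of large height $N$ to spread the information of a fixed finite-entropy generating partition $\beta$ across long time windows, so that the entropy per symbol approaches $h$, with only a $\log(N)/N$ overhead for position encoding.

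First fix a countable Borel partition $\beta$ that generates $\mathcal{B}(X)$ and has $\sH(\beta) < \infty$; such $\beta$ exists because $X$ is standard Borel. For large $N$, the Shannon--McMillan--Breiman theorem produces a collection $\mathcal{G}$ of atoms of $\beta_0^{N-1} := \bigvee_{i=0}^{N-1} T^{-i}\beta$ with $|\mathcal{G}| \leq e^{N(h+\epsilon/2)}$ and total mass $> 1-\delta$, for any prescribed $\delta > 0$. I will choose $\delta$ small later (depending on $\sH(\beta)$). Then apply Rokhlin's lemma to obtain $B \subseteq X$ with $B, TB, \ldots, T^{N-1}B$ disjoint and $\mu\bigl(\bigsqcup_{i=0}^{N-1} T^i B\bigr) > 1 - \delta$.

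Define the partition $\alpha$ by assigning: a distinct symbol $s_C$ to each $B \cap C$ for $C \in \mathcal{G}$; a single common symbol $*$ to $\bigsqcup_{i=1}^{N-1} T^i(B \cap \bigcup \mathcal{G})$; and on the error set $E$ (the tower complement together with the bad columns $\bigsqcup_i T^i(B \setminus \bigcup \mathcal{G})$) refine by $\beta$ itself. To verify that $\alpha$ generates, observe: if $\alpha(x)$ is one of the $\beta$-labels, then $\beta(x)$ is encoded directly. Otherwise $\alpha(x) \in \{*\} \cup \{s_C : C \in \mathcal{G}\}$, and the most recent time $k \in [-(N-1), 0]$ at which $\alpha(T^k x) \in \{s_C : C \in \mathcal{G}\}$, say with value $s_C$, identifies both the column $C$ containing $x$ and the height $i := -k$ of $x$ within it; hence $\beta(x)$ is the $i$-th coordinate of the $\beta_0^{N-1}$-name $C$. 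So $\salg_\Z(\alpha) \supseteq \salg_\Z(\beta) = \mathcal{B}(X)$.

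For the entropy bound, Jensen's inequality bounds the contribution of the $s_C$-atoms by $\mu(B)\log(|\mathcal{G}|/\mu(B)) \leq h + \epsilon/2 + \log(N)/N$; the $*$-symbol has mass close to $1$, contributing $o(1)$ as $N \to \infty$; and the $\beta$-refinement on $E$ contributes $\sum_{A \in \beta} -\mu(A \cap E)\log\mu(A \cap E)$, which tends to $0$ as $\mu(E) \to 0$ by a dominated-convergence argument that uses $\sH(\beta) < \infty$ and the monotonicity of $-x\log x$ on $[0,1/e]$. Choosing $\delta$ small enough (depending on $\sH(\beta)$ and $\epsilon$) and then $N$ large yields $\sH(\alpha) \leq h + \epsilon$. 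The main technical obstacle is reconciling the generation argument with the entropy bound on the error set, which is why the order of quantifiers matters: $\beta$ is chosen first, then $\delta$, then $N$.
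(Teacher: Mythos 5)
The paper does not prove this statement; it quotes Rokhlin's 1967 theorem (citing [Roh67]) as classical background, and later uses it as a black box inside Proposition~9.1. So there is no proof in the paper to compare against, and your proposal must be judged on its own. The overall strategy you describe — combine the Shannon--McMillan--Breiman theorem with a Rokhlin tower of height $N$, label the base of each ``typical'' column with a symbol from an alphabet of size roughly $e^{N h}$, fill the rest of the column with a single blank, and refine by $\beta$ on a small error set — is indeed the standard route to this theorem. However, there are two genuine gaps.

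First, the claim that a countable \emph{generating} partition $\beta$ with $\sH(\beta)<\infty$ ``exists because $X$ is standard Borel'' is false. For a non-atomic $(X,\mu)$, the $\sigma$-algebra $\salg(\beta)$ generated by a single countable partition $\beta$ is atomic and can never equal $\mathcal{B}(X)$; the only way $\beta$ can be $\Z$-generating is through $\salg_\Z(\beta)$, i.e., by using the dynamics. The existence of \emph{some} countable $\Z$-generating partition for a free ergodic $\Z$-action is Rokhlin's 1963 countable generator theorem, and it gives no entropy control. Producing a countable $\Z$-generating partition of \emph{finite} entropy is essentially the nontrivial half of the very theorem you are proving (and it is the part that costs all the work in the classical proofs). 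Assuming it at the outset is circular. The usual way around this is either a preliminary reduction (e.g., nested Rokhlin towers of increasing heights $N_1\ll N_2\ll\cdots$ that encode successive finite coarsenings $\beta_m$ of an arbitrary countable generator, with geometric control on the conditional entropies), or to invoke Krieger's theorem — but the latter postdates 1967 and would again be circular in spirit.

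Second, the plain Rokhlin lemma does not control the error set $E$. You take $\mathcal{G}$ with $\mu\bigl(X\setminus\bigcup\mathcal{G}\bigr)<\delta$ and a tower base $B$ with $\mu(B)\approx 1/N$, but the measure of the bad columns is $N\,\mu\bigl(B\setminus\bigcup\mathcal{G}\bigr)$, and the only a~priori bound is $\mu\bigl(B\setminus\bigcup\mathcal{G}\bigr)\le \mu\bigl(X\setminus\bigcup\mathcal{G}\bigr)<\delta$, giving $\mu(E)\lesssim N\delta$, which is not small once $N$ grows. One needs a strengthened Rokhlin lemma (Alpern-type, or ``uniform'') producing a base $B$ that is (approximately) independent of the finite partition $\beta_0^{N-1}$, so that $\mu\bigl(B\setminus\bigcup\mathcal{G}\bigr)\approx\mu(B)\cdot\mu\bigl(X\setminus\bigcup\mathcal{G}\bigr)<\delta/N$ and hence $\mu(E)=O(\delta)$. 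Without that independence the dominated-convergence estimate on $\sum_{A\in\beta}-\mu(A\cap E)\log\mu(A\cap E)$ has nothing to bite on. Once both of these points are repaired, the remaining estimates — the Jensen bound $\mu(B)\log(|\mathcal{G}|/\mu(B))\le h+\epsilon/2+\log N/N$, the $o(1)$ contribution from $*$, and the generation argument via looking back to the last $s_C$ symbol — are sound, and the order of quantifiers ($\beta$, then $\delta$, then $N$) is the right one.
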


\begin{thm*}[Krieger's finite generator theorem \cite{Kr70}, 1970]
If $\Z \acts (X, \mu)$ is a free ergodic {\pmp} action and $\ksh_\Z(X, \mu) < \log(k)$ then there exists a generating partition $\alpha$ consisting of $k$ sets.
\end{thm*}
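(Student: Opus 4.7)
The plan is to combine Rokhlin's generator theorem with a Rokhlin-tower coding argument, together with a marker construction, to produce a $k$-set partition that exactly generates.

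First, apply Rokhlin's generator theorem to obtain a countable generating partition $\beta$ with $\sH(\beta) < \log(k)$. Fix $\epsilon > 0$ satisfying $\sH(\beta) < \log(k) - 3\epsilon$ and set $h = h_\Z(X, \mu) \le \sH(\beta)$. Since $\sH(\beta) < \infty$, the small atoms of $\beta$ may be merged into a single ``junk'' atom, yielding a finite partition $\beta'$ arbitrarily close to $\beta$ in the partition metric and with $\sH(\beta') < \log(k) - 2\epsilon$.

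Next, invoke the Shannon--McMillan--Breiman theorem applied to $T$ and $\beta'$: for all sufficiently large $n$ there exists a collection $\mathcal{T}_n$ of atoms of $\bigvee_{i=0}^{n-1} T^{-i}\beta'$ of total measure at least $1 - \epsilon$ and cardinality at most $e^{n(h + \epsilon)} < k^n$. Fix an injection $\phi \colon \mathcal{T}_n \hookrightarrow \{1, \dots, k\}^n$ from the typical $n$-names into $k$-symbol words. Use Rokhlin's tower lemma to produce a tower $\{B, TB, \dots, T^{n-1}B\}$ covering all but $\epsilon$ of $X$; by passing to an appropriate measurable subbase we may assume that $\beta'$-names of length $n$ are constant on each column. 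Define a $k$-valued Borel partition $\alpha$ by coloring the column over $y \in B$ with the string $\phi(\beta'^n(y))$ whenever $\beta'^n(y) \in \mathcal{T}_n$, and using any fixed $k$-set partition on the exceptional set. Injectivity of $\phi$ guarantees that on the typical portion of the tower the $n$-name of $\alpha$ at the base level recovers $\beta'^n(y)$, so $\alpha$ generates $\beta$ up to a set of measure $O(\epsilon)$.

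The main obstacle is upgrading this \emph{approximate} recovery into exact generation; in particular, one must identify the tower base $B$ from $\alpha$-names alone. Krieger's remedy is a diagonal marker--filler construction: perform the above at an increasing sequence of scales $n_1 \ll n_2 \ll \cdots$, each time reserving a short marker pattern inside the entropy slack $\log(k) - \sH(\beta')$; ergodicity of $\Z \acts (X, \mu)$ ensures that almost every orbit sees each marker pattern infinitely often, which pins down the tower structure at every scale and allows decoding of $\beta'$, hence of $\beta$. Passing to a limit of $k$-set partitions that agree on sets of measure tending to $1$ produces a $k$-set partition $\alpha$ with $\salg_\Z(\alpha) \supseteq \salg_\Z(\beta) = \mathcal{B}(X)$. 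The delicate point—and the heart of the proof—is to design the markers so as to synchronize towers at all scales simultaneously without ever exceeding the hard entropy budget imposed by $h < \log(k)$.
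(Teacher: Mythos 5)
Your proposal outlines the classical Krieger strategy for $\Z$ and captures its structure faithfully: start from a countable generating partition of small Shannon entropy via Rokhlin's generator theorem, truncate to a finite partition, use the Shannon--McMillan--Breiman theorem to bound the number of typical $n$-names by $k^n$, build a Rokhlin tower, inject typical names into $\{1,\dots,k\}^n$, and then iterate over scales with a marker--filler scheme to promote approximate coding to exact generation. That is essentially correct, though the last paragraph hides the real work: the marker--filler synchronization across infinitely many scales while never exceeding the entropy budget $\log k$ is the heart of Krieger's proof and requires careful bookkeeping you have only gestured at. The paper, however, takes a genuinely different route. It deliberately \emph{avoids} the classical Rokhlin lemma and SMB theorem, since neither exists for general countable groups, and instead works inside the pseudo-group $[[E_G^X]]$ of the orbit equivalence relation. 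Proposition \ref{PROP RLEM} constructs a finite subequivalence relation with prescribed near-uniform statistics that is generated by a pseudo-group element expressible from two \emph{small} sets $S_1, S_2$; this simultaneously plays the roles of your Rokhlin tower and your SMB estimate. The counting that you delegate to SMB is replaced by direct Stirling-formula estimates (Section \ref{SECT DIST}), and the marker step that you invoke to locate tower bases is replaced by Sections \ref{SECT ACT}--\ref{SECT ALG}, which build a special pre-partition from which $S_1, S_2$ (and hence the whole tower structure) can be reconstructed, using only a tiny portion $M$ of the space. Your approach is shorter and more transparent for $\Z$ but does not generalize; the paper's approach is longer but works for every countably infinite group (even for non-free actions), yields the relative version (Theorem \ref{INTRO THM2}), and gives exact prescribed marginals $\mu(A_i)=p_i$ rather than merely $|\alpha|=k$, none of which your sketch delivers.
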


Both of the above theorems were later superseded by the following result of Denker.

\begin{thm*}[Denker \cite{De74}, 1974]
If $\Z \acts (X, \mu)$ is a free ergodic {\pmp} action and $\pv$ is a finite probability vector with $\ksh_\Z(X, \mu) < \sH(\pv)$, then for every $\epsilon > 0$ there is a generating partition $\alpha = \{A_0, \ldots, A_{|\pv|-1}\}$ with $|\mu(A_i) - p_i| < \epsilon$ for every $0 \leq i < |\pv|$.
\end{thm*}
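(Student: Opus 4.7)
My plan is to combine a Rokhlin-tower coding with an entropy-counting argument that exploits the gap $\sH(\pv) - h_\Z(X,\mu) > 0$. Write $k = |\pv|$ and fix $\delta > 0$ small enough that $h_\Z(X,\mu) + 3\delta < \sH(\pv)$. By Rokhlin's generator theorem, choose a countable generating partition $\beta$ with $\sH(\beta) < h_\Z(X,\mu) + \delta$. Pick a very large $N$ and a Rokhlin tower with base $B$ and height $N$ whose levels cover all but $\epsilon/10$ of $X$. The Shannon--McMillan--Breiman theorem applied to $\beta$ gives a collection $\Omega$ of atoms of $\beta^N := \bigvee_{i=0}^{N-1} T^{-i}\beta$ with $|\Omega| \le e^{N(h_\Z(X,\mu) + 2\delta)}$ and $\mu\bigl(\bigsqcup_{C \in \Omega} C\bigr) \ge 1 - \epsilon/10$. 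On the other hand, Stirling's formula gives at least $e^{N(\sH(\pv) - \delta)}$ words $w \in \{0, \ldots, k-1\}^N$ whose letter frequencies approximate $\pv$ to within $\epsilon/4$. Since $h_\Z(X,\mu) + 2\delta < \sH(\pv) - \delta$, I can injectively assign to each $C \in \Omega$ such a frequency-controlled word $w(C)$.

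Next, I would define $\alpha = \{A_0, \ldots, A_{k-1}\}$ on the tower by placing $T^i(B \cap C)$ into $A_{w(C)_i}$ for $C \in \Omega$ and $0 \le i < N$, and distribute the remaining ``bad'' portion (atoms outside $\Omega$ together with the tower complement) among the $A_j$'s with relative measures $p_j$ via a Borel isomorphism of the bad set with a scaled copy of $[0,1]$. The letter-frequency bounds on the codewords $w(C)$ together with the small measure of the bad set then yield $|\mu(A_j) - p_j| < \epsilon$ for every $j$, establishing the distribution half of the conclusion.

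The main obstacle is verifying that $\alpha$ generates, since from the $\alpha$-name of $x$ one must reconstruct the $\beta$-name. The scheme is to scan the $\alpha$-name for windows matching some codeword $w(C)$, recover $C$ via the injectivity of $C \mapsto w(C)$, and read off $N$ consecutive coordinates of the $\beta^N$-name of $x$; by ergodicity almost every orbit visits the good base $B \cap \bigsqcup_{C \in \Omega} C$ in both time directions, so the entire $\beta$-name is recovered $\mu$-almost everywhere. The subtlety is that the $\alpha$-name does not mark the tower base, so spurious codeword matches and boundary effects (returns to $B$ that land outside $\Omega$ or fall in the tower complement) must be excluded. I would address this by prepending to each $w(C)$ a distinguishing marker of length $o(N)$, engineered so that the marker pattern cannot appear at an interior position of any codeword; this costs only $o(1)$ in entropy rate and is absorbed by the slack $3\delta$. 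Once parsing is unambiguous, $\salg_\Z(\alpha) \supseteq \salg_\Z(\beta) = \mathcal{B}(X)$, completing the proof.
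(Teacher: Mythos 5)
Your proposal follows the classical $\Z$-specific route (Rokhlin tower, Shannon--McMillan--Breiman, Stirling counting), which is genuinely different from the paper's: the paper deliberately works inside the pseudo-group $[[E_G^X]]$ to dispense with both of those $\Z$-specific tools and proves a stronger statement, valid for all countable groups, with a relative version and with exact marginals $\mu(A_i)=p_i$. Within your framework, however, there is a real gap in the generation step. The $\alpha$-name of $x$ determines $\beta(T^t x)$ only at those $t$ for which $T^t x$ lies in a good column of the tower. The complementary \emph{bad set} --- the Rokhlin residual together with the columns over atoms outside $\Omega$ --- has positive measure (on the order of $\epsilon/5$), and you partition it among the $A_j$'s by an arbitrary Borel isomorphism with an interval, which carries no information about $\beta$. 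By the pointwise ergodic theorem, for a.e.\ $x$ the set of integers $t$ with $T^t x$ in the bad set has positive lower density, so a positive-density portion of the $\beta$-name of $x$ is \emph{not} a function of its $\alpha$-name, and in general $\salg_\Z(\alpha)\subsetneq\salg_\Z(\beta)=\mathcal{B}(X)$. The assertion that the full $\beta$-name is recovered ``because a.e.\ orbit visits the good base in both directions'' confuses infinite recurrence with density-one coverage. (Concretely: for a Bernoulli shift with canonical generator $\beta$, take $\alpha$ agreeing with $\beta$ off a positive-measure set $S$ and $\beta$-independent on $S$; then $\alpha$ does not generate.)

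This is exactly where the classical proofs of Krieger and Denker become iterative: one builds a sequence of towers of growing height and a sequence of partitions $\alpha_1,\alpha_2,\ldots$, each a small perturbation of the previous one that corrects the residual error, with the perturbations summable so that the limit is a genuine generator with the prescribed distribution. Your one-pass construction yields only an \emph{approximate} generator and supplies no mechanism for eliminating the remaining error. (The marker issue you flag is a secondary matter: designing a length-$o(N)$ pattern that cannot recur inside codewords, while keeping $|\{w(C)\}|$ above $e^{N(\sH(\pv)-\delta)}$ and the letter frequencies within $\epsilon/4$ of $\pv$, takes real work but is standard.) It is instructive to see how the paper avoids iteration altogether: Proposition~4.5 manufactures, inside the pseudo-group, a finite subequivalence relation $E_\theta$ of constant class size $n$ that covers \emph{all} of $X$ (so there is no tower residual) and in which \emph{every} class already has the desired empirical distribution of $\xi'$ and $\beta$ (so there are no bad columns), together with Borel sets $S_1,S_2$ of total measure $<\epsilon$ from which $\theta$ and its transversal are $\salg_G(\{S_1,S_2\})$-expressible. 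Corollary~6.2 then perturbs the coding pre-partition so that $S_1,S_2\in\ralg_G(\alpha')$, whence the decoding is valid on all of $X$ and a single pass suffices --- which is also what makes the exact equalities $\mu(A_i)=p_i$ attainable.
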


Grillenberger and Krengel \cite{GK76} obtained a further strengthening of these results which roughly says that, under the assumptions of Denker's theorem, one can control the joint distribution of $\alpha$ and finitely many of its translates. In particular, they showed that under the assumptions of Denker's theorem there is a generating partition $\alpha$ with $\mu(A_i) = p_i$ for every $0 \leq i < |\pv|$.

Over the years, Krieger's theorem acquired much fame and underwent various generalizations. In 1972, Katznelson and Weiss \cite{KaW72} outlined a proof of Krieger's theorem for free ergodic actions of $\Z^d$. Roughly a decade later, \v{S}ujan \cite{Su83} stated Krieger's theorem for amenable groups but only outlined the proof. The first proof for amenable groups to appear in the literature was obtained in 1988 by Rosenthal \cite{Ros88} who proved Krieger's theorem under the more restrictive assumption that $\ksh_G(X, \mu) < \log(k-2) < \log(k)$. This was not improved until 2002 when Danilenko and Park \cite{DP02} proved Krieger's theorem for amenable groups under the assumption $\ksh_G(X, \mu) < \log(k-1) < \log(k)$. It is none-the-less a folklore unpublished result that Krieger's theorem holds for amenable groups, i.e. if $G \acts (X, \mu)$ is a free ergodic {\pmp} action of an amenable group and $\ksh_G(X, \mu) < \log(k)$ then there is a generating partition consisting of $k$ sets. Our much more general investigations here yield this as a consequence. We believe that this is the first explicit proof of this fact.

Krieger's theorem was also generalized to a relative setting. The relative version of Krieger's theorem for $\Z$ actions was first proven by Kifer and Weiss \cite{KiW02} in 2002. It states that if $\Z \acts (X, \mu)$ is a free ergodic {\pmp} action, $\cF$ is a $\Z$-invariant sub-$\sigma$-algebra, and the relative entropy satisfies $\ksh_\Z(X, \mu | \cF) < \log(k)$, then there is a Borel partition $\alpha$ consisting of $k$ sets such that $\salg_\Z(\alpha) \vee \cF = \mathcal{B}(X)$. This result was later extended by Danilenko and Park \cite{DP02} to free ergodic actions of amenable groups under the assumption that $\cF$ induces a class-bijective factor.

Rokhlin's theorem was generalized to actions of abelian groups by Conze \cite{C72} in 1972 and was just recently extended to amenable groups by Seward and Tucker-Drob \cite{ST14}. Specifically, if $G \acts (X, \mu)$ is a free ergodic {\pmp} action of an amenable group then the entropy $\ksh_G(X, \mu)$ is equal to the infimum of $\sH(\alpha)$ over all countable generating partitions $\alpha$. Denker's theorem on the other hand has not been extended beyond actions of $\Z$. 

In this paper we consider arbitrary countable groups, but we are particularly interested in the case of non-amenable groups. This is due to the recent breathtaking development of an entropy theory for actions of certain non-amenable groups. Specifically, groundbreaking work of Bowen in 2008 \cite{B10b}, followed with improvements by Kerr and Li \cite{KL11a}, has created the notion of \emph{sofic entropy} for {\pmp} actions of sofic groups. We remind the reader that the class of sofic groups contains the countable amenable groups, and it is an open question if every countable group is sofic. Sofic entropy in fact extends Kolmogorov--Sinai entropy as the two notions coincide for actions of amenable groups \cite{B12,KL13}. The sofic entropy of a Bernoulli shift $(L^G, \lambda^G)$ is equal to the Shannon entropy of its base $\sH(\lambda)$ \cite{B10b,KL11b}. Consequently, for sofic groups containing an infinite amenable subgroup, sofic entropy classifies Bernoulli shifts up to isomorphism \cite{Or70a,Or70b,OW87,St75}. Bowen has also made significant progress on classifying Bernoulli shifts over general countable groups \cite{B12b}, but a full classification does not yet exist.

This paper is motivated by some of the challenges facing sofic entropy theory. For instance, is Sinai's factor theorem true: do free ergodic actions of positive sofic entropy factor onto Bernoulli shifts? Is the Ornstein isomorphism theorem true: are Bernoulli shifts over sofic groups classified up to isomorphism by their sofic entropy? For both questions, the classical proofs involving elaborate constructions of partitions cannot be carried out. This is because vital properties of actions of amenable groups such as the Rokhlin lemma, the Shannon--McMillan--Breiman theorem, and the monotone decreasing property of entropy under factor maps are simply false for non-amenable groups. Additionally, the formula for sofic entropy involves counting auxiliary objects which are external to the original action, making it unclear if the language of sofic entropy is sufficiently rich to uncover the delicate constructions of partitions which are needed for answering these questions. We also do not know if sofic entropy satisfies either the Rokhlin generator theorem or the Krieger finite generator theorem. These are pertinent questions since sofic entropy is easier to define, compute, and understand when there exists a finite generating partition. Finally, a significant challenge to sofic entropy is that it is restricted to the realm of actions of sofic groups (in fact, to the realm of ``sofic actions''). If countable, non-sofic groups exist, how will we understand and classify their Bernoulli shifts?

Motivated by these issues, we introduce a new notion of entropy which is defined for all {\pmp} actions of all countable groups. For a {\pmp} action $G \acts (X, \mu)$ we define the \emph{Rokhlin entropy} to be
$$\rh_G(X, \mu) = \inf \Big\{ \sH(\alpha | \cJ) : \alpha \text{ is a countable partition and } \salg_G(\alpha) \vee \cJ = \mathcal{B}(X) \Big\},$$
where $\cJ$ is the $\sigma$-algebra of $G$-invariant Borel sets. We study this invariant in this three-part series, but in the present paper we only consider ergodic actions, in which case the Rokhlin entropy simplifies to
$$\rh_G(X, \mu) = \inf \Big\{ \sH(\alpha) \: \alpha \text{ is a countable Borel generating partition} \Big\}.$$
We name this invariant in honor of Rokhlin's generator theorem. For free actions of amenable groups, Rokhlin entropy coincides with Kolmogorov--Sinai entropy \cite{AS,ST14}. Thus Rokhlin entropy is a simple and natural extension of Kolmogorov--Sinai entropy.

More generally, if $\cF \subseteq \Borel(X)$ is a $G$-invariant $\sigma$-algebra, then we define the \emph{Rokhlin entropy of $G \acts (X, \mu)$ relative to $\cF$}, denoted $\rh_G(X, \mu | \cF)$, to be
$$\inf \Big\{ \sH(\alpha | \cF \vee \cJ) \: \alpha \text{ is a countable Borel partition and } \salg_G(\alpha) \vee \cF \vee \cJ = \mathcal{B}(X) \Big\}.$$
Again, $\cJ$ will always be trivial in this paper because we will only consider ergodic actions. We refer the reader to Section \ref{SECT PRELIM} for the definition of the conditional Shannon entropy $\sH(\alpha | \cF)$, but we remark that when $\cF = \{X, \varnothing\}$ we have $\sH(\alpha | \cF) = \sH(\alpha)$. In particular $\rh_G(X, \mu | \{X, \varnothing\}) = \rh_G(X, \mu)$.

We show in Proposition \ref{PROP RELROK} that for free ergodic actions of amenable groups relative Rokhlin entropy coincides with relative Kolmogorov--Sinai entropy (this is extended to non-ergodic actions in Part III \cite{AS}). We also observe in Proposition \ref{PROP OE} that $\rh_G(X, \mu | \cF)$ is invariant under orbit equivalences for which the orbit-change cocycle is $\cF$-measurable, generalizing a similar property of Kolmogorov--Sinai entropy discovered by Rudolph and Weiss \cite{RW00}.

Our main theorem is the following finite generator theorem which applies to all ergodic actions of countably infinite groups. (We in fact prove a stronger result; see Theorem \ref{INTRO THM2}).

\begin{thm} \label{INTRO THM1}
Let $G$ be a countably infinite group acting ergodically, but not necessarily freely, by measure-preserving bijections on a non-atomic standard probability space $(X, \mu)$. Let $\cF$ be a $G$-invariant sub-$\sigma$-algebra. If $\pv = (p_i)$ is any finite or countable probability vector with $\rh_G(X, \mu | \cF) < \sH(\pv)$, then there is a Borel partition $\alpha = \{A_i \: 0 \leq i < |\pv|\}$ with $\mu(A_i) = p_i$ for every $0 \leq i < |\pv|$ and with $\salg_G(\alpha) \vee \cF = \Borel(X)$.
\end{thm}

This theorem greatly supersedes previous work of the author in \cite{S12} which, under the assumption $\rh_G(X, \mu) < \infty$, constructed a finite generating partition without any control over its cardinality or distribution. The major difficulty which the present work overcomes is that all prior arguments for controlling the cardinality and distribution of generating partitions rely critically upon the classical Rokhlin lemma and Shannon--McMillan--Breiman theorem, and these tools do not exist for actions of general countable groups.

We remark that in order for a partition $\alpha$ to exist as described in Theorem \ref{INTRO THM1}, it is necessary that $\rh_G(X, \mu | \cF) \leq \sH(\pv)$. So the above theorem is optimal since in general there are actions where the infimum $\rh_G(X, \mu)$ is not achieved, such as free ergodic actions which are not isomorphic to any Bernoulli shift \cite{S14a}.

If $\rh_G(X, \mu) < \log(k)$ then using $\pv = (p_0, \ldots, p_{k-1})$ where each $p_i = 1 / k$ we obtain the following generalization of the (relative) Krieger finite generator theorem:

\begin{cor} \label{INTRO CORK}
Let $G$ be a countably infinite group acting ergodically, but not necessarily freely, by measure-preserving bijections on a non-atomic standard probability space $(X, \mu)$, and let $\cF$ be a $G$-invariant sub-$\sigma$-algebra. If $\rh_G(X, \mu | \cF) < \log(k)$, then there is a partition $\alpha$ with $|\alpha| = k$ and $\salg_G(\alpha) \vee \cF = \Borel(X)$.
\end{cor}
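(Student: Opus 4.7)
The plan is to simply specialize Theorem \ref{INTRO THM1} to the uniform probability vector on $k$ atoms. Set $\pv = (p_0, \ldots, p_{k-1})$ with $p_i = 1/k$ for each $0 \leq i < k$. Then $\pv$ is a legitimate finite probability vector (its entries are positive and sum to $1$), and a direct computation gives
$$\sH(\pv) = -\sum_{i=0}^{k-1} \frac{1}{k} \log\!\left(\frac{1}{k}\right) = \log(k).$$
The hypothesis $\rh_G(X,\mu) < \log(k)$ of the corollary therefore translates exactly into the hypothesis $\rh_G(X,\mu) < \sH(\pv)$ of Theorem \ref{INTRO THM1}.

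Applying Theorem \ref{INTRO THM1} with this choice of $\pv$ produces a Borel generating partition $\alpha = \{A_i \: 0 \leq i < k\}$ with $\mu(A_i) = 1/k$ for every $i$. Since each prescribed weight $1/k$ is strictly positive, none of the $A_i$ is null, and the defining property of a partition forces the $A_i$ to be pairwise disjoint and hence pairwise distinct as sets. Consequently $|\alpha| = k$ as required.

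There is no real obstacle here; the content of the corollary lies entirely in Theorem \ref{INTRO THM1}, and the corollary is nothing more than the observation that the uniform distribution on $k$ points has Shannon entropy $\log(k)$ and positive weights, so the prescribed partition has exactly $k$ (nontrivial) parts.
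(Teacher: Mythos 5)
Your proof is correct and is exactly the argument the paper gives: the paper itself derives Corollary \ref{INTRO CORK} from Theorem \ref{INTRO THM1} by taking $\pv = (1/k, \ldots, 1/k)$, for which $\sH(\pv) = \log(k)$. The only detail worth keeping is the one you noted, that the prescribed masses $1/k$ are positive, so all $k$ cells are nonempty and distinct.
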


We mention that Corollary \ref{INTRO CORK} is the first version of Krieger's finite generator theorem for non-free actions. Furthermore, we believe that Corollary \ref{INTRO CORK} (together with the Rokhlin generator theorem for amenable groups \cite{ST14}) is the first explicit proof of Krieger's finite generator theorem for free ergodic actions of countable amenable groups. In fact, we obtain the following strong form of Denker's theorem for amenable groups:

\begin{cor}
Let $G$ be a countably infinite amenable group and let $G \acts (X, \mu)$ be a free ergodic {\pmp} action. If $\pv = (p_i)$ is any finite or countable probability vector with $\ksh_G(X, \mu) < \sH(\pv)$ then there exists a generating partition $\alpha = \{A_i \: 0 \leq i < |\pv|\}$ with $\mu(A_i) = p_i$ for every $0 \leq i < |\pv|$.
\end{cor}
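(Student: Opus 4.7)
The plan is very short: the corollary is essentially an immediate combination of Theorem \ref{INTRO THM1} with the Rokhlin generator theorem for amenable groups, proved by Seward and Tucker-Drob in \cite{ST14}. The latter result says that for a free ergodic {\pmp} action $G \acts (X,\mu)$ of a countably infinite amenable group $G$, the Rokhlin entropy $\rh_G(X,\mu)$ coincides with the classical Kolmogorov--Sinai entropy $h_G(X,\mu)$. Once this equality is invoked, the hypothesis $h_G(X,\mu) < \sH(\pv)$ becomes $\rh_G(X,\mu) < \sH(\pv)$, which is exactly the hypothesis of Theorem \ref{INTRO THM1}.

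Concretely, I would proceed in three steps. First, verify that the space $(X,\mu)$ is non-atomic: since $G$ is countably infinite and the action is free and measure-preserving, any atom $x$ would yield an infinite orbit $Gx$ consisting of points of equal positive measure, contradicting $\mu(X) = 1$. Second, apply \cite{ST14} to conclude $\rh_G(X,\mu) = h_G(X,\mu) < \sH(\pv)$. Third, apply Theorem \ref{INTRO THM1} to the ergodic action $G \acts (X,\mu)$ on the non-atomic standard probability space with the vector $\pv$; this immediately produces a generating partition $\alpha = \{A_i \: 0 \leq i < |\pv|\}$ with $\mu(A_i) = p_i$ for every $i$.

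There is no substantive obstacle here; the entire content of the corollary is packaged inside Theorem \ref{INTRO THM1} together with the identification of the two entropies in the amenable case. The main point worth highlighting in the write-up is simply that \cite{ST14} bridges the classical invariant appearing in the statement with the Rokhlin-entropy invariant used throughout this paper, so that the amenable case falls into the general framework developed here.
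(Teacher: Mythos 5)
Your proposal is correct and matches the route the paper intends: the corollary is stated immediately after the remark that it follows from Theorem \ref{INTRO THM1} together with the Rokhlin generator theorem for amenable groups from \cite{ST14}, which identifies $h_G(X,\mu)$ with $\rh_G(X,\mu)$ for free ergodic actions. Your observation that freeness and countable infiniteness of $G$ force $(X,\mu)$ to be non-atomic is the right (and needed) housekeeping step to apply Theorem \ref{INTRO THM1}.
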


Returning to our points of motivation, we point out that Theorem \ref{INTRO THM1} implies that sofic entropy will satisfy the Krieger finite generator theorem provided it satisfies the Rokhlin generator theorem (i.e. provided sofic entropy coincides with Rokhlin entropy). A simple consequence of the definitions is that sofic entropy is bounded above by Rokhlin entropy \cite{AS, B10b}, and its an important open problem to determine if they are equal (assuming a free action with sofic entropy not minus infinity).

Since sofic entropy is a lower bound to Rokhlin entropy, we know that Bernoulli shifts $(L^G, \lambda^G)$ over sofic groups $G$ have Rokhlin entropy $\rh_G(L^G, \lambda^G) = \sH(\lambda)$. Since the definition of Rokhlin entropy does not mention soficity, this suggests the equality $\rh_G(L^G, \lambda^G) = \sH(\lambda)$ may hold for all countably infinite groups. If so, Rokhlin entropy could be capable of classifying Bernoulli shifts over all countably infinite groups up to isomorphism (in particular, over non-sofic groups if they exist). We further investigate this question in Part II \cite{S14a}.

In the long-term, we hope that Rokhlin entropy will not only be useful in its own right, but that its study will develop in parallel with sofic entropy theory and that the two theories will be mutually beneficial to one another. In particular, we hope Rokhlin entropy will enrich the language and framework available for studying entropy-type problems and that it will progress our techniques for constructing partitions, possibly leading the way to generalizations of the many deep results of Ornstein.

A hidden significance of Theorem \ref{INTRO THM1} is that it opens the door to developing a theory of Rokhlin entropy. It should be pointed out that the definition of Rokhlin entropy is both quite natural and immediately suggested by Rokhlin's generator theorem, and the idea of its definition had certainly occurred to researchers beforehand. However, the abstract nature of the definition, an infimum over {\it all} generating partitions, seems to prevent any viable means of study. Our main theorem changes this situation. It reveals, as a consequence, a sub-additive property of Rokhlin entropy. To properly state this property in its strongest form requires additional definitions and is postponed to Part II \cite{S14a}, but we mention here a simple corollary to give an indication.

\begin{cor} \label{INTRO COR1}
Let $G$ be a countably infinite group acting ergodically, but not necessarily freely, by measure-preserving bijections on a non-atomic standard probability space $(X, \mu)$. If $G \acts (Y, \nu)$ is a factor of $G \acts (X, \mu)$ and $\cF$ is the sub-$\sigma$-algebra of $X$ associated to $Y$ then
$$\rh_G(X, \mu) \leq \rh_G(Y, \nu) + \rh_G(X, \mu | \cF).$$
\end{cor}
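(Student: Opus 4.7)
The plan is to combine three ingredients: (i) a partition of $X$ whose $G$-translates together with $\cF$ generate $\mathcal{B}(X)$, with Shannon entropy close to $\rh_G(X, \mu | \cF)$; (ii) a partition of $X$ whose $G$-translates alone generate $\cF$, obtained by pulling back a near-optimal generator of $(Y, \nu)$; and (iii) subadditivity of Shannon entropy on joins, $\sH(\alpha \vee \tilde{\beta}) \leq \sH(\alpha) + \sH(\tilde{\beta})$.

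The key preliminary observation, which is exactly what Theorem \ref{INTRO THM2} supplies, is that
$$\rh_G(X, \mu | \cF) = \inf\Big\{\sH(\alpha) \: \alpha \text{ is a countable Borel partition with } \salg_G(\alpha) \vee \cF = \mathcal{B}(X)\Big\}.$$
The inequality $\leq$ is immediate from $\sH(\alpha | \cF) \leq \sH(\alpha)$. For the reverse, given $\epsilon > 0$ I would choose a finite probability vector $\pv$ with $\rh_G(X, \mu | \cF) < \sH(\pv) < \rh_G(X, \mu | \cF) + \epsilon$ and apply Theorem \ref{INTRO THM2} with $r = 1$. The resulting pre-partition $\alpha$ satisfies $\mu(\bigcup \alpha) = 1$, so it is already a partition; consequently $\ralg_G(\alpha) = \salg_G(\alpha)$, yielding $\salg_G(\alpha) \vee \cF = \mathcal{B}(X)$ and $\sH(\alpha) = \sH(\pv) < \rh_G(X, \mu | \cF) + \epsilon$.

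Given this observation the proof is brief. Assume $\rh_G(Y, \nu)$ and $\rh_G(X, \mu | \cF)$ are both finite (otherwise the inequality is trivial) and fix $\epsilon > 0$. By the definition of Rokhlin entropy on the factor $G \acts (Y, \nu)$, pick a countable Borel generating partition $\beta$ with $\sH(\beta) < \rh_G(Y, \nu) + \epsilon$; its pullback $\tilde{\beta}$ to $X$ satisfies $\sH(\tilde{\beta}) = \sH(\beta)$ and $\salg_G(\tilde{\beta}) = \cF$ modulo null sets. By the observation above, pick a countable Borel partition $\alpha$ of $X$ with $\salg_G(\alpha) \vee \cF = \mathcal{B}(X)$ and $\sH(\alpha) < \rh_G(X, \mu | \cF) + \epsilon$. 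Then $\alpha \vee \tilde{\beta}$ is a countable Borel partition with $\salg_G(\alpha \vee \tilde{\beta}) = \salg_G(\alpha) \vee \cF = \mathcal{B}(X)$, hence it generates $G \acts (X, \mu)$, and subadditivity gives $\sH(\alpha \vee \tilde{\beta}) \leq \sH(\alpha) + \sH(\tilde{\beta}) < \rh_G(X, \mu | \cF) + \rh_G(Y, \nu) + 2\epsilon$. Letting $\epsilon \to 0$ finishes the proof. The only substantive step is the preliminary observation; once Theorem \ref{INTRO THM2} has converted the conditional bound $\sH(\alpha | \cF)$ into a plain Shannon bound $\sH(\alpha)$, joining the two generators and applying subadditivity is routine.
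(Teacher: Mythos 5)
Your proof is correct and takes essentially the same route as the paper's. The paper's proof (of the more general Corollary \ref{COR ADD}) invokes Corollary \ref{COR RELBASIC} rather than Theorem \ref{INTRO THM2} directly, but both formalize the same observation stated after Theorem \ref{INTRO THM2} in the introduction---that the theorem with $r=1$ lets one replace $\sH(\alpha | \cF)$ by $\sH(\alpha)$ in the definition of $\rh_G(X,\mu | \cF)$---and then pull back a near-optimal generator of $Y$, join with the new partition, and apply subadditivity of Shannon entropy.
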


For example, if $\alpha$ and $\beta$ are partitions with $\salg_G(\alpha \vee \beta) = \Borel(X)$, then the above corollary implies $\rh_G(X, \mu) \leq \sH(\alpha) + \sH(\beta | \salg_G(\alpha))$. The inequality in Corollary \ref{INTRO COR1} can be strict, such as when $\rh_G(X, \mu) < \rh_G(Y, \nu)$. A strict inequality is common for actions of non-amenable groups \cite{S13}.

The sub-additive property turns out to be tremendously useful, and it is absolutely critical to our study of Rokhlin entropy in Parts II and III.

\subsection*{Update}
Between this article's first appearance on the arXiv and it reaching its final form, a few developments have occurred.

Rokhlin entropy theory was ultimately successful in providing a framework for generalizing the Sinai factor theorem to all countably infinite groups. Specifically, if $G \acts (X, \mu)$ is a free ergodic action of any countably infinite group $G$, then $G \acts (X, \mu)$ factors onto the Bernoulli shift $G \acts (L^G, \lambda^G)$ whenever $\rh_G(X, \mu) \geq \sH(\lambda)$ \cite{S16a}. In particular, actions which do not admit any finite generating partitions must factor onto all Bernoulli shifts. Also, since sofic entropy is bounded above by Rokhlin entropy, this implies that sofic entropy satisfies the Sinai factor theorem as well.

In \cite{S16} two fairly computable expressions were found which provide upper bounds to Rokhlin entropy. In particular, it was shown that if $G \acts (X, \mu)$ is a free {\pmp} action and $\alpha$ is a generating partition with $\sH(\alpha) < \infty$, then the Rokhlin entropy satisfies
$$\rh_G(X, \mu) \leq \inf_{T \subseteq G} \frac{1}{|T|} \cdot \sH( \textstyle{\bigvee_{t \in T}} t \cdot \alpha),$$
where the infimum is over all finite $T \subseteq G$. When $G$ is amenable, the right-hand side coincides with Kolmogorov--Sinai entropy (it is common to use a sequence of F{\o}lner sets $T$, but this isn't necessary \cite{OW87}). Thus, in some sense this upper bound should be no more difficult to compute in practice than Kolmogorov--Sinai entropy. We point out that this upper bound makes our finite generator theorem, Theorem \ref{INTRO THM1}, easier to apply.

In addition to Parts II and III \cite{S14a,AS} and the papers \cite{S16, S16a} mentioned above, additional study of Rokhlin entropy has been undertaken in \cite{Al, B16, GS15}. The sub-additive property of Rokhlin entropy continues to serve as the foundation for all of these new results.

\subsection*{Outline}
The proof of Theorem \ref{INTRO THM1} is essentially self-contained as it relies on technical constructions carried out by hand. The proof makes significant use of the pseudo-group of the induced orbit-equivalence relation. We review basic properties of the pseudo-group in  Section \ref{SECT PSEUDO}. In Section \ref{SECT INFT} we review and strengthen a construction of the author used in \cite{S12} for replacing countably infinite partitions with finite ones. Sections \ref{SECT PSEUDO} and \ref{SECT INFT} thus reprove the main theorem of \cite{S12} which states that finite Rokhlin entropy implies the existence of a finite generating partition.

The real difficulty of the present work is constructing a generating partition while controlling its cardinality and distribution. The classical Rokhlin lemma and Shannon--McMillan--Breiman theorem were critical to this task in all prior proofs of Krieger's theorem. The important advantage we obtain by working with the pseudo-group is that we are able to develop a replacement to the Rokhlin lemma and the Shannon--McMillan--Breiman theorem which is suitable to our needs. We present this replacement in Section \ref{SECT EQREL}.

A new, significant difficulty is created from our use of the pseudo-group. Specifically, the notion of a ``generating'' partition cannot be expressed in the language of the pseudo-group. Ultimately, we must build a single, efficient partition which both codes information for certain transformations in the pseudo-group, and simultaneously codes information for the action of $G$. An obstacle in this simultaneous-coding problem is that there is no geometric relationship between our pseudo-group transformations and the $G$-action. This is the most challenging part of the proof. The coding machinery needed for this task is presented in Section \ref{SECT ACT}. Then in Section \ref{SECT KRIEGER} we prove the main theorem. Finally, in Section \ref{SECT AMENABLE} we show that relative Rokhlin entropy and relative Kolmogorov--Sinai entropy coincide for free actions of amenable groups.

\subsection*{Acknowledgments}
This research was supported by the National Science Foundation Graduate Student Research Fellowship under Grant No. DGE 0718128. The author thanks his advisor, Ralf Spatzier, for numerous helpful discussions, Tim Austin for many suggestions to improve the paper, and Miklos Ab\'{e}rt and Benjy Weiss for encouraging the author to coin a name for the new invariant studied here. Part of this work was completed while the author attended the Arbeitsgemeinschaft: Sofic Entropy workshop at the Mathematisches Forschungsinstitut Oberwolfach in Germany. The author thanks the MFO for their hospitality and travel support.

\section{Preliminaries} \label{SECT PRELIM}

Every probability space $(X, \mu)$ which we consider will be assumed to be standard. In particular, $X$ will be a standard Borel space. For $\xi \subseteq \mathcal{B}(X)$, we let $\salg(\xi)$ denote the smallest sub-$\sigma$-algebra containing $\xi$ (not to be confused with the notation $\salg_G(\xi)$ from the introduction). At times, we will consider the space of all Borel probability measures on $X$. Recall that the space of Borel probability measures on $X$ has a natural standard Borel structure which is generated by the maps $\lambda \mapsto \lambda(A)$ for $A \subseteq X$ Borel \cite[Theorem 17.24]{K95}.

An action $G \acts (Y, \nu)$ is a \emph{factor} of $G \acts (X, \mu)$ if there exists a measure-preserving $G$-equivariant map $\pi : (X, \mu) \rightarrow (Y, \nu)$. Every factor $\pi : (X, \mu) \rightarrow (Y, \nu)$ is uniquely associated (mod $\mu$-null sets) to a $G$-invariant sub-$\sigma$-algebra $\cF$ of $X$, and conversely every $G$-invariant sub-$\sigma$-algebra $\cF$ of $(X, \mu)$ is uniquely associated (up to isomorphism) to a factor $\pi : (X, \mu) \rightarrow (Y, \nu)$ \cite[Theorem 2.15]{Gl03}. If $\pi : (X, \mu) \rightarrow (Y, \nu)$ is a factor map, then there is an essentially unique Borel map associating each $y \in Y$ to a Borel probability measure $\mu_y$ on $X$ such that $\mu = \int \mu_y \ d \nu(y)$ and $\mu_y(\pi^{-1}(y)) = 1$. We call this the \emph{disintegration} of $\mu$ over $\nu$. Note that for any Borel set $A \subseteq X$, the map $y \mapsto \mu_y(A)$ is Borel.

Let $G \acts (X, \mu)$ be a {\pmp} action, and let $\cF$ be a $G$-invariant sub-$\sigma$-algebra. Let $\pi : (X, \mu) \rightarrow (Y, \nu)$ be the associated factor, and let $\mu = \int \mu_y \ d \nu(y)$ be the disintegration of $\mu$ over $\nu$. For a countable Borel partition $\alpha$ of $X$, the \emph{conditional Shannon entropy} of $\alpha$ relative to $\cF$ is
$$\sH(\alpha | \cF) = \int_Y \sum_{A \in \alpha} - \mu_y(A) \cdot \log(\mu_y(A)) \ d \nu(y) = \int_Y \sH_{\mu_y}(\alpha) \ d \nu(y).$$
If $\cF = \{X, \varnothing\}$ is the trivial $\sigma$-algebra then $\sH(\alpha | \cF) = \sH(\alpha)$. For a countable partition $\beta$ of $X$ we set $\sH(\alpha | \beta) = \sH(\alpha | \salg(\beta))$. We write $\alpha \geq \beta$ if $\alpha$ is finer than $\beta$. We will need the following standard properties of Shannon entropy (proofs can be found in \cite{Do11}):

\begin{lem} \label{LEM SHAN}
Let $(X, \mu)$ be a standard probability space, let $\alpha$ and $\beta$ be countable Borel partitions of $X$, and let $\cF$ and $\Sigma$ be sub-$\sigma$-algebras. Then
\begin{enumerate}
\item[\rm (i)] $\sH(\alpha | \cF) \leq \log |\alpha|$;
\item[\rm (ii)] if $\alpha \geq \beta$ then $\sH(\alpha | \cF) \geq \sH(\beta | \cF)$;
\item[\rm (iii)] if $\Sigma \subseteq \cF$ then $\sH(\alpha | \Sigma) \geq \sH(\alpha | \cF)$;
\item[\rm (iv)] $\sH(\alpha \vee \beta) = \sH(\beta) + \sH(\alpha | \beta) \geq \sH(\beta)$;
\item[\rm (v)] $\sH(\alpha \vee \beta | \cF) = \sH(\beta | \cF) + \sH(\alpha | \salg(\beta) \vee \cF) \leq \sH(\beta | \cF) + \sH(\alpha)$;
\item[\rm (vi)] $\sH(\alpha | \cF) = \sup_\xi \sH(\xi | \cF)$, where the supremum is over all finite partitions $\xi$ coarser than $\alpha$;
\item[\rm (vii)] if $\sH(\alpha) < \infty$ then $\sH(\alpha | \cF) = \inf_\xi \sH(\alpha | \xi)$, where the infimum is over all finite partitions $\xi \subseteq \cF$.
\end{enumerate}
\end{lem}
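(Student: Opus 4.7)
The plan is to lift pointwise facts about Shannon entropy of probability vectors through the disintegration $\mu = \int_Y \mu_y\, d\nu(y)$ provided by the factor $\pi:(X,\mu)\to(Y,\nu)$ associated with $\cF$. Items (i), (ii), and (iv) reduce directly to finite-dimensional facts applied fiberwise and then integrated: for (i), the bound $\sH_{\mu_y}(\alpha)\leq\log|\alpha|$ comes from concavity of $t\mapsto -t\log t$ and Jensen's inequality; for (ii), grouping the cells of $\alpha$ that land in each cell $B\in\beta$ and applying $\sum_i -p_i\log p_i \geq -(\sum_i p_i)\log(\sum_i p_i)$ gives $\sH_{\mu_y}(\alpha)\geq\sH_{\mu_y}(\beta)$ fiberwise; and (iv) is the standard Shannon chain rule by direct computation with the identity
$$-\mu(A\cap B)\log\mu(A\cap B) = -\mu(A\cap B)\log\mu(B) - \mu(B)\cdot\frac{\mu(A\cap B)}{\mu(B)}\log\frac{\mu(A\cap B)}{\mu(B)}.$$
Applying the same chain rule inside the integral yields the equality in (v).

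For (iii), and therefore also the inequality in (v), the key observation is that $\Sigma\subseteq\cF$ corresponds to a chain of factor maps $(X,\mu)\to(Y,\nu)\to(Z,\eta)$, so disintegrating $\nu$ over $\eta$ gives $\mu_z = \int \mu_y\, d\nu_z(y)$ and hence $\mu_z(A)=\int \mu_y(A)\,d\nu_z(y)$ for each $A\in\alpha$. Concavity of $-t\log t$ then yields
$$-\mu_z(A)\log\mu_z(A) \;\geq\; \int -\mu_y(A)\log\mu_y(A)\, d\nu_z(y),$$
and summing over $A\in\alpha$ and integrating over $z$ gives $\sH(\alpha|\Sigma)\geq\sH(\alpha|\cF)$. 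The inequality in (v) follows by specializing this to $\Sigma=\{X,\varnothing\}\subseteq\salg(\beta)\vee\cF$.

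For (vi), enumerate $\alpha = \{A_1,A_2,\ldots\}$ and set $\xi_n = \{A_1,\ldots,A_n,\bigcup_{i>n}A_i\}$; each $\xi_n$ is coarser than $\alpha$ and fiberwise $\sH_{\mu_y}(\xi_n)\nearrow\sH_{\mu_y}(\alpha)$ by inspection of the tail, so monotone convergence lifts this to $\sH(\xi_n|\cF)\nearrow\sH(\alpha|\cF)$, which combined with (ii) gives the supremum identity. For (vii), the inequality $\sH(\alpha|\cF)\leq\sH(\alpha|\xi)$ for finite $\xi\subseteq\cF$ is an instance of (iii). For the reverse, use that on a standard Borel space $\cF$ is countably generated modulo null sets: pick a refining sequence of finite partitions $\xi_n\subseteq\cF$ whose join generates $\cF$. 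The reverse martingale convergence theorem then gives $E[\mathbf{1}_A\mid\xi_n]\to E[\mathbf{1}_A\mid\cF]$ almost surely and in $L^1$ for each $A\in\alpha$; the hypothesis $\sH(\alpha)<\infty$ supplies the uniform integrability needed to exchange the limit with the countable sum defining $\sH(\alpha|\xi_n)$, producing $\sH(\alpha|\xi_n)\to\sH(\alpha|\cF)$ and hence $\inf_\xi \sH(\alpha|\xi)\leq \sH(\alpha|\cF)$.

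The only genuinely nontrivial step is (vii), where one needs countable generation of $\cF$ together with a uniform-integrability argument to push a countable sum through a martingale limit; every other item reduces to Jensen's inequality or the chain rule applied fiberwise under the disintegration, which is why the author simply cites \cite{Do11}.
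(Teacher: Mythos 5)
The paper does not prove this lemma --- it just cites Downarowicz \cite{Do11} --- so there is no ``paper's own proof'' to compare against; the question is simply whether your sketch is sound, and it essentially is: the strategy of proving each inequality fiberwise for the probability vectors $(\mu_y(A))_{A\in\alpha}$ and then integrating against $\nu$ is the natural one and each reduction you make is correct. Two small points deserve attention. First, a terminological slip: in (vii) the filtration $\salg(\xi_n)$ \emph{increases} to $\cF$, so the convergence $E[\mathbf{1}_A\mid\xi_n]\to E[\mathbf{1}_A\mid\cF]$ a.s. and in $L^1$ is the ordinary (forward, L\'evy) martingale convergence theorem, not reverse martingale convergence, which applies to decreasing filtrations. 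Second, the uniform-integrability step you invoke in (vii) is the one genuinely delicate point and is worth being precise about: the fact one needs is the Chung--Neveu maximal inequality, which asserts that if $\sH(\alpha)<\infty$ then the conditional information functions $I_\alpha^{\xi_n}(x)=-\log E[\mathbf{1}_{A(x)}\mid\xi_n](x)$ satisfy $\sup_n I_\alpha^{\xi_n}\in L^1(\mu)$; this dominating function is what lets you pass the a.s.\ limit into the integral and conclude $\sH(\alpha\mid\xi_n)\downarrow\sH(\alpha\mid\cF)$. (One also uses that $\cF$ is countably generated modulo null sets, which you note and which holds because $(X,\mu)$ is standard.) Finally, in (v) the identification $\int_Y\sH_{\mu_y}(\alpha\mid\beta)\,d\nu(y)=\sH(\alpha\mid\salg(\beta)\vee\cF)$ is stated a bit tersely; it rests on the observation that the disintegration of $\mu$ over the factor associated to $\salg(\beta)\vee\cF$ is obtained from the $\mu_y$'s by further conditioning on the cells of $\beta$, which follows from uniqueness of disintegration along the chain of factors. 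None of these are gaps in substance, but the write-up would benefit from spelling them out.
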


Throughout this paper, whenever working with a probability space $(X, \mu)$ we will generally ignore sets of measure zero. In particular, we write $A = B$ for $A, B \subseteq X$ if their symmetric difference is null. We similarly write $\cF = \Sigma$ for sub-$\sigma$-algebras $\cF$, $\Sigma$ if they agree up to null sets. Also, by a partition of $X$ we will mean a countable collection of pairwise-disjoint Borel sets whose union is conull. In particular, we allow partitions to contain the empty set. Similarly, we will use the term \emph{probability vector} more freely than described in the introduction. A probability vector $\pv = (p_i)$ will be any finite or countable ordered tuple of non-negative real numbers which sum to $1$ (so some terms $p_i$ may be $0$). We say that another probability vector $\qv$ is \emph{coarser} than $\pv$ if there is a partition $\cQ = \{Q_j \: 0 \leq j < |\qv|\}$ of the integers $\{0 \leq i < |\pv|\}$ such that for every $0 \leq j < |\qv|$
$$q_j = \sum_{i \in Q_j} p_i.$$

A \emph{pre-partition} of $X$ is a countable collection of pairwise-disjoint subsets of $X$. We say that another pre-partition $\beta$ extends $\alpha$, written $\beta \sqsupseteq \alpha$, if there is an injection $\iota : \alpha \rightarrow \beta$ with $A \subseteq \iota(A)$ for every $A \in \alpha$. Equivalently, $\beta \sqsupseteq \alpha$ if and only if $\cup \alpha \subseteq \cup \beta$ and the restriction of $\beta$ to $\cup \alpha$ coincides with $\alpha$.

For a Borel pre-partition $\alpha$, we define the \emph{reduced $\sigma$-algebra} $\ralg_G(\alpha)$ to be the collection of Borel sets $R \subseteq X$ such that there is a conull $X' \subseteq X$ satisfying:
\begin{quote}
for every $r \in R \cap X'$ and $x \in X' \setminus R$ there is $g \in G$ with $g \cdot r, g \cdot x \in \cup \alpha$ and with $g \cdot r$ and $g \cdot x$ lying in distinct classes of $\alpha$.
\end{quote}
It is a basic exercise to verify that $\ralg_G(\alpha)$ is indeed a $\sigma$-algebra. We note the following basic property.

\begin{lem} \label{LEM EXT}
Let $G \acts (X, \mu)$ be a {\pmp} action, and let $\alpha$ be a pre-partition. If $\beta$ is a pre-partition and $\beta \sqsupseteq \alpha$ then $\ralg_G(\beta) \supseteq \ralg_G(\alpha)$. In particular, if $\beta$ is a partition and $\beta \sqsupseteq \alpha$ then $\salg_G(\beta) \supseteq \ralg_G(\alpha)$.
\end{lem}

\begin{proof}
Fix $R \in \ralg_G(\alpha)$. By definition of $\ralg_G(\alpha)$, there is a conull $X' \subseteq X$ such that for all $r \in R \cap X'$ and $x \in X' \setminus R$ there is $g \in G$ with $g \cdot r, g \cdot x \in \cup \alpha$ and such that $\alpha$ separates $g \cdot r$ and $g \cdot x$. Since the restriction of $\beta$ to $\cup \alpha$ is equal to $\alpha$, we also have that $g \cdot r, g \cdot x \in \cup \beta$ and $\beta$ separates $g \cdot r$ and $g \cdot x$. We conclude that $R \in \ralg_G(\beta)$.
\end{proof}

The definition of reduced $\sigma$-algebra may seem a bit odd at first, but comes about naturally from our work here and will significantly simplify some of the proofs in Part II and Part III \cite{S14a, AS}. A key property of this definition is that if $\beta$ is any partition extending $\alpha$ then one automatically has $\salg_G(\beta) \supseteq \ralg_G(\alpha)$. Another important property is that if $G \acts (Y, \nu)$ is a factor of $(X, \mu)$ via $\phi : (X, \mu) \rightarrow (Y, \nu)$, then for any pre-partition $\alpha$ of $Y$ we have $\phi^{-1}(\ralg_G(\alpha)) \subseteq \ralg_G(\phi^{-1}(\alpha))$. These properties can be quite useful for specialized constructions. For example, one could imagine constructing two pre-partitions $\alpha^1$ and $\alpha^2$ which achieve different goals. If $\cup \alpha^1$ is disjoint from $\cup \alpha^2$, then one can choose a common extension partition $\alpha$ and automatically have $\salg_G(\alpha) \supseteq \ralg_G(\alpha^1) \vee \ralg_G(\alpha^2)$. This type of construction will be performed in Part II.

Below is the statement of the main theorem of this paper in its strongest form. It is a strengthening of Theorem \ref{INTRO THM1} mentioned in the introduction. This theorem is new even in the case $G = \Z$.

\begin{thm} \label{INTRO THM2}
Let $G$ be a countably infinite group acting ergodically, but not necessarily freely, by measure-preserving bijections on a non-atomic standard probability space $(X, \mu)$. Let $\cF$ be a $G$-invariant sub-$\sigma$-algebra of $X$. If $0 < r \leq 1$ and $\pv = (p_i)$ is any finite or countable probability vector with $\rh_G(X, \mu | \cF) < r \cdot \sH(\pv)$, then there is a Borel pre-partition $\alpha = \{A_i \: 0 \leq i < |\pv|\}$ with $\mu(\cup \alpha) = r$, $\mu(A_i) = r \cdot p_i$ for every $0 \leq i < |\pv|$, and $\ralg_G(\alpha) \vee \cF = \mathcal{B}(X)$.
\end{thm}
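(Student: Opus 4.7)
The plan is to build $\alpha$ as the limit of a Cauchy sequence of finite pre-partitions $\alpha_n$, each with the exact target distribution $r \pv$ but with $\ralg_G(\alpha_n) \vee \cF$ capturing successively larger portions of $\mathcal{B}(X)$. First I would invoke the hypothesis to fix a countable Borel partition $\beta$ with $\salg_G(\beta) \vee \cF = \mathcal{B}(X)$ and $\sH(\beta \mid \cF) < r \cdot \sH(\pv)$. Next I would reduce to the case where $\pv$ is finite by choosing $N$ large enough that the truncated vector $(p_0, \ldots, p_{N-2}, \sum_{i \geq N-1} p_i)$ still has entropy strictly greater than $\rh_G(X,\mu \mid \cF)/r$; this is possible by continuity of $\sH$ when $\sH(\pv)$ is finite and by direct choice of $N$ when it is infinite. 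After producing a finite pre-partition realizing the truncated distribution, the last block is repartitioned by a Borel rule, folded into the iterative scheme, to split off the remaining countably many pieces of required measure while preserving the reduced-$\sigma$-algebra condition.

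The core of the argument is a single-step refinement lemma: given a pre-partition $\alpha_n$ with the correct distribution and a Borel set $B \subseteq X$, one can produce $\alpha_{n+1}$ with the same distribution, arbitrarily close to $\alpha_n$ in the partition metric, such that $B$ becomes measurable up to small error with respect to $\ralg_G(\alpha_{n+1}) \vee \cF$. The modification swaps small equal-measure pieces between blocks of $\alpha_n$ inside $\cup \alpha_n$, using the $G$-action to record canonically at each point of $\cup \alpha_n$ whether various $G$-translates lie in $B$. Because the swaps are mass-preserving within each block, the distribution $r \pv$ is maintained exactly. Applied to an enumeration of atoms of finite refinements of $\beta$ under $G$-translates, with error tolerances decaying geometrically, the construction yields a Cauchy sequence; summability is controlled by the entropy gap $r \sH(\pv) - \sH(\beta \mid \cF) > 0$, which upper-bounds the cumulative information that must be encoded.

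The main obstacle is the refinement lemma, specifically the requirement to control the reduced $\sigma$-algebra $\ralg_G$ rather than the ordinary $\salg_G$. The reduced algebra demands that the encoding be invariant under \emph{every} extension of $\alpha_{n+1}$ on $X \setminus \cup \alpha_{n+1}$, a substantially stronger condition than mere recoverability from some extension. Classical Krieger and Denker arguments obtain this robustness from the Rokhlin lemma, producing long towers on which codes are imprinted, together with the Shannon--McMillan--Breiman theorem to count codewords; neither tool is available for a general countable group. I expect the substitute to be a purely combinatorial marker construction performed relative to $\cF$, in which information is encoded along long finite patterns cut from $G$-orbits by $\cF$-measurable markers, together with a measure-theoretic entropy bookkeeping that replaces Shannon--McMillan--Breiman. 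A secondary delicate point, the exactness of the distribution rather than approximation, is handled by insisting that every local modification be a mass-balanced swap within $\cup \alpha_n$, a constraint the encoding scheme must be designed to accommodate.
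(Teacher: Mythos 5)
Your outline takes a genuinely different route from the paper --- an iterative Cauchy-sequence scheme against the paper's one-shot construction --- but it leaves the single hardest step as an unproved black box. The paper proceeds via Theorem \ref{THM PREREL}: it first replaces the countable generating partition by a \emph{finite} one using Proposition \ref{PROP FINGEN}, then constructs the target pre-partition in a single pass by (a) building a finite equivalence relation $E_\theta$ with classes of a uniform size $n$ whose class-frequencies approximate $\mu$ (Proposition \ref{PROP RLEM}, the Rokhlin/SMB surrogate), parametrized by two small ``marker'' sets $S_1, S_2$; (b) Stirling-formula counting (Section \ref{SECT DIST}) to injectively map $\xi'$-names to $\qv$-names with pairwise $\dbar$-separation; and (c) a separate, self-contained coding mechanism (Proposition \ref{PROP CODE}, Corollary \ref{COR CODE}) that simultaneously fixes the distribution to exactly $r\pv$ and forces $S_1, S_2$ into the \emph{reduced} algebra $\ralg_G(\alpha)$. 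Crucially, $S_1, S_2$ are \emph{not} $\cF$-measurable; they are encoded directly into $\alpha$ via a carefully built pattern anchored on a marker set $R$ with the self-identifying structure of Proposition \ref{PROP REC}, which is what makes the information readable from \emph{every} extension of the pre-partition.

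The gap in your proposal is that the ``single-step refinement lemma'' --- equal-measure swaps that render a Borel set approximately $\ralg_G$-measurable --- is asserted, not proved, and is essentially the whole theorem. You acknowledge this (``I expect the substitute to be\ldots''), but the difficulty is precisely there: encoding information so that it is recoverable from \emph{every} partition extending the pre-partition (not merely from some canonical extension) requires the code to be self-locating relative only to $\alpha$'s restriction to $\cup\alpha$, which is exactly what the paper's $R$/$W$-marker machinery in Sections \ref{SECT ACT} and \ref{SECT ALG} is engineered to achieve. Moreover, the passage to the limit is not innocent: $\ralg_G$ is an intersection over all extensions, and approximate $\ralg_G(\alpha_n)$-measurability with geometrically decaying error does not manifestly yield exact $\ralg_G(\alpha)$-measurability for the limiting pre-partition, since small perturbations of $\alpha$ on its support can destroy the recoverability of the code from an arbitrary extension. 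Finally, your handling of countable $\pv$ by iteratively ``splitting off'' pieces is much heavier than needed: since a refinement of $\alpha'$ only enlarges $\ralg_G$ (every extension of the refinement coarsens to an extension of $\alpha'$), the paper simply refines a finite-$\qv$ pre-partition at the very end using non-atomicity.
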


\section{The pseudo-group of an ergodic action} \label{SECT PSEUDO}

For a {\pmp} action $G \acts (X, \mu)$ we let $E_G^X$ denote the induced orbit equivalence relation:
$$E_G^X = \{(x, y) \: \exists g \in G, \ \ g \cdot x = y\}.$$
The \emph{pseudo-group} of $E_G^X$, denoted $[[E_G^X]]$, is the set of all Borel bijections $\theta : \dom(\theta) \rightarrow \rng(\theta)$ where $\dom(\theta), \rng(\theta) \subseteq X$ are Borel and $\theta(x) \in G \cdot x$ for every $x \in \dom(\theta)$. The \emph{full group} of $E_G^X$, denoted $[E_G^X]$, is the set of all $\theta \in [[E_G^X]]$ with $\dom(\theta) = \rng(\theta) = X$ (i.e. conull in $X$).

For every $\theta \in [[E_G^X]]$ there is a Borel partition $\{Z_g^\theta \: g \in G\}$ of $\dom(\theta)$ such that $\theta(x) = g \cdot x$ for every $x \in Z_g^\theta$. Thus, an important fact which we will use repeatedly is that every $\theta \in [[E_G^X]]$ is measure-preserving. We mention that the sets $Z_g^\theta$ are in general not uniquely determined from $\theta$ since the action of $G$ might not be free. It will be necessary to keep record of such decompositions $\{Z_g^\theta\}$ for $\theta \in [[E_G^X]]$. The precise notion we need is the following.

\begin{defn}
Let $G \acts (X, \mu)$ be a {\pmp} action, let $\theta \in [[E_G^X]]$, and let $\cF$ be a $G$-invariant sub-$\sigma$-algebra. We say that $\theta$ is \emph{$\cF$-expressible} if $\dom(\theta), \rng(\theta) \in \cF$ and there is a $\cF$-measurable partition $\{Z_g^\theta \: g \in G\}$ of $\dom(\theta)$ such that $\theta(x) = g \cdot x$ for every $x \in Z_g^\theta$ and all $g \in G$.
\end{defn}

We observe two simple facts on the notion of expressibility.

\begin{lem} \label{LEM EXPMOVE}
Let $G \acts (X, \mu)$ be a {\pmp} action and let $\cF$ be a $G$-invariant sub-$\sigma$-algebra. If $\theta \in [[E_G^X]]$ is $\cF$-expressible and $A \subseteq X$, then $\theta(A) = \theta(A \cap \dom(\theta))$ is $\salg_G(\{A\}) \vee \cF$-measurable. In particular, if $A \in \cF$ then $\theta(A) \in \cF$.
\end{lem}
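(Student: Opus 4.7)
The plan is a direct unpacking of the definition of $\cF$-expressibility. The only tool I need is the fixed $\cF$-measurable decomposition $\{Z_g^\theta \: g \in G\}$ of $\dom(\theta)$ on which $\theta$ agrees with a group element, plus the fact that both $\cF$ and $\salg_G(\{A\})$ are $G$-invariant.

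First, the equality $\theta(A) = \theta(A \cap \dom(\theta))$ is immediate since $\theta$ is only defined on $\dom(\theta)$. So it suffices to show $\theta(A \cap \dom(\theta)) \in \salg_G(\{A\}) \vee \cF$. Using that $\{Z_g^\theta\}$ partitions $\dom(\theta)$ and $\theta \res Z_g^\theta = g \res Z_g^\theta$, I would write
$$\theta(A) \;=\; \bigsqcup_{g \in G} \theta(A \cap Z_g^\theta) \;=\; \bigsqcup_{g \in G} g \cdot (A \cap Z_g^\theta) \;=\; \bigsqcup_{g \in G} (g \cdot A) \cap (g \cdot Z_g^\theta).$$
This is a countable union, so it is enough to show each term belongs to $\salg_G(\{A\}) \vee \cF$. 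Since $\cF$ is $G$-invariant and $Z_g^\theta \in \cF$, we have $g \cdot Z_g^\theta \in \cF$. Since $\salg_G(\{A\})$ is by definition the smallest $G$-invariant $\sigma$-algebra containing $A$, we have $g \cdot A \in \salg_G(\{A\})$. Hence $(g \cdot A) \cap (g \cdot Z_g^\theta) \in \salg_G(\{A\}) \vee \cF$, and the claim follows.

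For the ``in particular'' clause, if $A \in \cF$ then, because $\cF$ is a $G$-invariant $\sigma$-algebra containing $A$ and the null sets, minimality of $\salg_G(\{A\})$ gives $\salg_G(\{A\}) \subseteq \cF$, so $\salg_G(\{A\}) \vee \cF = \cF$ and $\theta(A) \in \cF$.

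There is no real obstacle here; the statement is essentially a bookkeeping lemma. The only subtle point worth flagging is that the decomposition $\{Z_g^\theta\}$ need not be unique when the action of $G$ on $X$ is non-free, but this is harmless because $\cF$-expressibility supplies one specific such decomposition, and the argument above uses only its existence.
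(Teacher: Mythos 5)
Your proof is correct and takes essentially the same approach as the paper's: both decompose $\theta(A)$ as $\bigcup_{g} g \cdot (A \cap Z_g^\theta)$ using the $\cF$-measurable decomposition supplied by expressibility, and then invoke $G$-invariance of $\cF$ and $\salg_G(\{A\})$. You simply spell out the intermediate step $g \cdot (A \cap Z_g^\theta) = (g\cdot A)\cap(g\cdot Z_g^\theta)$, which the paper leaves implicit.
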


\begin{proof}
Fix a $\cF$-measurable partition $\{Z_g^\theta \: g \in G\}$ of $\dom(\theta)$ such that $\theta(x) = g \cdot x$ for all $x \in Z_g^\theta$. Then
\begin{equation*}
\theta(A) = \bigcup_{g \in G} g \cdot (A \cap Z_g^\theta) \in \salg_G(\{A\}) \vee \cF. \qedhere
\end{equation*}
\end{proof}

\begin{lem} \label{LEM EXPGROUP}
Let $G \acts (X, \mu)$ be a {\pmp} action and let $\cF$ be a $G$-invariant sub-$\sigma$-algebra. If $\theta, \phi \in [[E_G^X]]$ are $\cF$-expressible then so are $\theta^{-1}$ and $\theta \circ \phi$.
\end{lem}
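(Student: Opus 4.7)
The plan is to unpack the $\cF$-expressibility hypotheses into explicit $\cF$-measurable decompositions $\{Z_g^\theta\}$ of $\dom(\theta)$ and $\{Z_g^\phi\}$ of $\dom(\phi)$, and in each case construct the required decomposition for $\theta^{-1}$, respectively $\theta\circ\phi$, by pushing these sets around and relabeling by the appropriate group element. Measurability will follow from the $G$-invariance of $\cF$ together with Lemma \ref{LEM EXPMOVE}, which was set up precisely to propagate $\cF$-measurability through applications of $\cF$-expressible maps.

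For $\theta^{-1}$, I would first note that $\dom(\theta^{-1})=\rng(\theta)$ and $\rng(\theta^{-1})=\dom(\theta)$ both lie in $\cF$ by hypothesis. Then I would set $Z_h^{\theta^{-1}} := \theta(Z_{h^{-1}}^\theta) = h^{-1}\cdot Z_{h^{-1}}^\theta$ for each $h\in G$. Because $\theta$ is a bijection and $\{Z_g^\theta\}$ partitions $\dom(\theta)$, the family $\{Z_h^{\theta^{-1}}\}$ partitions $\rng(\theta)$; and since $Z_{h^{-1}}^\theta\in\cF$ and $\cF$ is $G$-invariant, each $Z_h^{\theta^{-1}}\in\cF$. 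Finally, for $y\in Z_h^{\theta^{-1}}$ write $y=\theta(x)=h^{-1}\cdot x$ with $x\in Z_{h^{-1}}^\theta$, so $\theta^{-1}(y) = x = h\cdot y$, verifying the labeling.

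For $\theta\circ\phi$, I would first observe that $\dom(\theta\circ\phi) = \phi^{-1}\bigl(\dom(\theta)\cap\rng(\phi)\bigr)$ and $\rng(\theta\circ\phi) = \theta\bigl(\dom(\theta)\cap\rng(\phi)\bigr)$; both lie in $\cF$ via Lemma \ref{LEM EXPMOVE} applied to $\theta$ and to $\phi^{-1}$ (whose $\cF$-expressibility was just established). For each $k\in G$, I would then define
$$Z_k^{\theta\circ\phi} := \bigcup_{\substack{h,g\in G\\ hg=k}} Z_g^\phi \cap \phi^{-1}(Z_h^\theta).$$
Each $Z_h^\theta\in\cF$, and since $\phi^{-1}$ is $\cF$-expressible, Lemma \ref{LEM EXPMOVE} gives $\phi^{-1}(Z_h^\theta)\in\cF$, so $Z_k^{\theta\circ\phi}\in\cF$. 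Disjointness of these sets, together with the fact that $\{Z_g^\phi\}$ partitions $\dom(\phi)$ and $\{\phi^{-1}(Z_h^\theta)\}$ partitions $\dom(\theta\circ\phi)$, implies that $\{Z_k^{\theta\circ\phi}\}$ partitions $\dom(\theta\circ\phi)$. For $x\in Z_g^\phi\cap \phi^{-1}(Z_h^\theta)$ we compute $(\theta\circ\phi)(x) = \theta(g\cdot x) = hg\cdot x = k\cdot x$, closing the verification.

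There is no substantive obstacle here, only bookkeeping; the one point to watch is that every set-theoretic manipulation of the $Z_g^\theta$'s and $Z_g^\phi$'s must stay inside $\cF$, and this is exactly what Lemma \ref{LEM EXPMOVE} and the $G$-invariance of $\cF$ together guarantee.
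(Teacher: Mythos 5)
Your proof is correct and follows essentially the same route as the paper's: the same decompositions $Z_h^{\theta^{-1}} = h^{-1}\cdot Z_{h^{-1}}^\theta$ and $Z_k^{\theta\circ\phi} = \bigcup_{h} Z_{h^{-1}k}^\phi \cap \phi^{-1}(Z_h^\theta)$, with $\cF$-measurability propagated via $G$-invariance of $\cF$ and Lemma \ref{LEM EXPMOVE}.
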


\begin{proof}
Fix $\cF$-measurable partitions $\{Z_g^\theta \: g \in G\}$ and $\{Z_g^\phi \: g \in G\}$ of $\dom(\theta)$ and $\dom(\phi)$, respectively, satisfying $\theta(x) = g \cdot x$ for all $x \in Z_g^\theta$ and $\phi(x) = g \cdot x$ for all $x \in Z_g^\phi$. Define for $g \in G$
$$Z_g^{\theta^{-1}} = g^{-1} \cdot Z_{g^{-1}}^\theta.$$
Then each $Z_g^{\theta^{-1}}$ is $\cF$-measurable since $\cF$ is $G$-invariant. It is easily checked that $\{Z_g^{\theta^{-1}} \: g \in G\}$ partitions $\rng(\theta)$ and satisfies $\theta^{-1}(x) = g \cdot x$ for all $x \in Z_g^{\theta^{-1}}$. Thus $\theta^{-1}$ is $\cF$-expressible.

Observe that by the previous lemma, $\phi^{-1}(Z_g^\theta) \in \cF$ for every $g \in G$ since $\phi^{-1}$ is $\cF$-expressible. Notice that the sets $Z_g^\phi \cap \phi^{-1}(Z_h^\theta)$ partition $\dom(\theta \circ \phi)$. Define for $g \in G$
$$Z_g^{\theta \circ \phi} = \bigcup_{h \in G} \Big( Z_{h^{-1} g}^\phi \cap \phi^{-1}(Z_h^\theta) \Big).$$
These sets are $\cF$-measurable and pairwise-disjoint and we have $\theta \circ \phi(x) = g \cdot x$ for all $x \in Z_g^{\theta \circ \phi}$.
\end{proof}

With the aid of Lemma \ref{LEM EXPMOVE}, we observe a basic property of relative Rokhlin entropy. The proposition below resembles a theorem of Rudolph and Weiss from classical entropy theory \cite{RW00}. Note that if $G$ and $\Gamma$ act on $(X, \mu)$ with the same orbits then $E_G^X = E_\Gamma^X$ and $[[E_G^X]] = [[E_\Gamma^X]]$. In this situation, we say that $\theta \in [[E_G^X]]$ is $(G, \cF)$-expressible if it is $\cF$-expressible with respect to the $G$-action $G \acts (X, \mu)$.

\begin{prop} \label{PROP OE}
Let $G$ and $\Gamma$ be countable groups, and let $G \acts (X, \mu)$ and $\Gamma \acts (X, \mu)$ be {\pmp} ergodic actions having the same orbits. Suppose that $\cF$ is a $G$ and $\Gamma$ invariant sub-$\sigma$-algebra such that the transformation associated to each $g \in G$ is $(\Gamma, \cF)$-expressible and similarly the transformation associated to each $\gamma \in \Gamma$ is $(G, \cF)$-expressible. Then
$$\rh_G(X, \mu | \cF) = \rh_\Gamma(X, \mu | \cF).$$
\end{prop}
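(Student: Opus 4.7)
The plan is to reduce the equality of the two Rokhlin entropies to a single $\sigma$-algebra identity: for every countable Borel partition $\alpha$ of $X$,
$$\salg_G(\alpha) \vee \cF \ = \ \salg_\Gamma(\alpha) \vee \cF.$$
Granting this, the two infima defining $\rh_G(X,\mu|\cF)$ and $\rh_\Gamma(X,\mu|\cF)$ are taken over the same collection of partitions (namely those for which the above join is $\mathcal{B}(X)$) and the functional being minimized, $\sH(\alpha|\cF)$, depends only on $\mu$ and $\cF$ and not on either group action. Equality of the Rokhlin entropies is then immediate.

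To prove the $\sigma$-algebra identity, I will establish $\salg_G(\alpha) \vee \cF \subseteq \salg_\Gamma(\alpha) \vee \cF$ and invoke symmetry. Fix $g\in G$ and regard the transformation $T_g\colon x\mapsto g\cdot x$ as an element of $[[E_\Gamma^X]]$ via $E_G^X = E_\Gamma^X$. By hypothesis $T_g$ is $(\Gamma,\cF)$-expressible, so Lemma \ref{LEM EXPMOVE} applied inside the $\Gamma$-action gives $g\cdot A \in \salg_\Gamma(\{A\})\vee\cF$ for every Borel $A\subseteq X$. A convenient way to harvest this is to introduce
$$\mathcal{A}_g \ := \ \{B \in \mathcal{B}(X) \: g \cdot B \in \salg_\Gamma(\alpha) \vee \cF\},$$
which is a sub-$\sigma$-algebra of $\mathcal{B}(X)$. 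By the preceding, $\mathcal{A}_g$ contains $\alpha$ as well as each $\Gamma$-translate $\gamma\cdot A$ for $A\in\alpha$, and it contains $\cF$ since $\cF$ is $G$-invariant. Hence $\mathcal{A}_g \supseteq \salg_\Gamma(\alpha)\vee\cF$, which means $\salg_\Gamma(\alpha)\vee\cF$ is closed under the action of $g$. As $g\in G$ was arbitrary, $\salg_\Gamma(\alpha)\vee\cF$ is a $G$-invariant $\sigma$-algebra containing $\alpha\cup\cF$, so it contains the smallest such, $\salg_G(\alpha)\vee\cF$. The reverse inclusion follows identically using the symmetric hypothesis that each $\gamma\in\Gamma$ is $(G,\cF)$-expressible.

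There is no real obstacle here once the identity is isolated; the argument is essentially mechanical. The only points requiring care are that one must keep careful track of which group is doing the translating at each step, and that both directions of the expressibility hypothesis are needed — with only one direction one would obtain a single inequality between the Rokhlin entropies. Note also that ergodicity plays no role in this argument; it appears in the statement only because the definition of $\rh_G$ is given for ergodic actions.
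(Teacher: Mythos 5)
Your proof is correct and takes essentially the same approach as the paper: both reduce to showing $\salg_G(\alpha) \vee \cF = \salg_\Gamma(\alpha) \vee \cF$ for every countable partition $\alpha$, and both establish this by using Lemma \ref{LEM EXPMOVE} (applied within the $\Gamma$-action to the $(\Gamma,\cF)$-expressible transformation $T_g$) to show that $\salg_\Gamma(\alpha) \vee \cF$ is $G$-invariant, then invoking symmetry. Your auxiliary $\sigma$-algebra $\mathcal{A}_g$ is just a slightly more explicit packaging of the same invariance argument, and your observation that ergodicity is not used is accurate.
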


\begin{proof}
It suffices to show that for every countable partition $\alpha$, $\salg_G(\alpha) \vee \cF = \salg_\Gamma(\alpha) \vee \cF$. Indeed, since the transformation associated to each $g \in G$ is $(\Gamma, \cF)$-expressible, it follows from Lemma \ref{LEM EXPMOVE} that the $\sigma$-algebra $\salg_\Gamma(\alpha) \vee \cF$ is $G$-invariant and contains $\alpha$. Therefore $\salg_G(\alpha) \vee \cF \subseteq \salg_\Gamma(\alpha) \vee \cF$. With the same argument we obtain the reverse containment.
\end{proof}

The lemma below and the corollaries which follow it provide us with all elements of the pseudo-group $[[E_G^X]]$ which will be needed in forthcoming sections.

\begin{lem} \label{LEM SIMPLEMIX}
Let $G \acts (X, \mu)$ be an ergodic {\pmp} action. Let $A, B \subseteq X$ be Borel sets with $0 < \mu(A) \leq \mu(B)$. Then there exists a $\salg_G(\{A, B\})$-expressible function $\theta \in [[E_G^X]]$ with $\dom(\theta) = A$ and $\rng(\theta) \subseteq B$.
\end{lem}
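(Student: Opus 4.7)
The plan is to build $\theta$ by a greedy exhaustion argument along an enumeration of $G$, and then read off expressibility from the explicit construction.

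Fix an enumeration $G = \{g_n : n \geq 0\}$ with all $g_n$ distinct. I will recursively define a decreasing sequence $A_0 \supseteq A_1 \supseteq \cdots$ of ``remaining'' subsets of $A$ and $B_0 \supseteq B_1 \supseteq \cdots$ of ``remaining'' subsets of $B$, together with disjoint ``matched'' sets $C_n \subseteq A_n$. Set $A_0 = A$ and $B_0 = B$, and at stage $n$ let
\[
C_n = A_n \cap g_n^{-1}(B_n), \qquad A_{n+1} = A_n \setminus C_n, \qquad B_{n+1} = B_n \setminus g_n \cdot C_n.
\]
Then define $Z_{g_n}^\theta = C_n$ and $\theta(x) = g_n \cdot x$ for $x \in C_n$. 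By construction the $C_n$ are pairwise disjoint, $\theta(C_n) \subseteq B$, and different $C_n$ are sent into pairwise disjoint pieces of $B$, so $\theta$ is a well-defined Borel injection from $\cup_n C_n$ into $B$ with $\theta(x) \in G \cdot x$.

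The key step is to show $\mu(A_\infty) = 0$ where $A_\infty = \bigcap_n A_n$, so that $\dom(\theta) = A$ modulo null sets. Let $B_\infty = \bigcap_n B_n$. Because the action is measure-preserving, $\mu(C_n) = \mu(g_n \cdot C_n)$ for each $n$, so $\mu(A_n) - \mu(A_{n+1}) = \mu(B_n) - \mu(B_{n+1})$, and telescoping gives
\[
\mu(A) - \mu(A_\infty) = \mu(B) - \mu(B_\infty),
\]
so $\mu(A_\infty) \leq \mu(B_\infty)$ using $\mu(A) \leq \mu(B)$. On the other hand, at stage $n$ every $x \in A_n$ with $g_n \cdot x \in B_n$ was removed, so for each $n$, $g_n \cdot A_\infty \cap B_\infty = \varnothing$. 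Thus the $G$-invariant set $G \cdot A_\infty$ is disjoint from $B_\infty$. By ergodicity $\mu(G \cdot A_\infty) \in \{0,1\}$; if $\mu(A_\infty) > 0$ then $\mu(B_\infty) = 0$, contradicting $\mu(A_\infty) \leq \mu(B_\infty)$. Hence $\mu(A_\infty) = 0$.

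It remains to check expressibility. A straightforward induction shows $A_n, B_n \in \salg_G(\{A,B\})$: indeed $A_0, B_0$ are in this $\sigma$-algebra, and since $\salg_G(\{A,B\})$ is $G$-invariant, $g_n^{-1}(B_n)$ and $g_n \cdot C_n$ both lie in it, and then so do $A_{n+1}$ and $B_{n+1}$. Therefore each $C_n = Z_{g_n}^\theta$ is $\salg_G(\{A,B\})$-measurable, and $\{Z_g^\theta : g \in G\}$ (with $Z_g^\theta = \varnothing$ for $g$ outside the enumeration, which here is not needed) is a $\salg_G(\{A,B\})$-measurable partition of $\dom(\theta) = A$ realizing $\theta$ by the action. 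Hence $\theta \in [[E_G^X]]$ is $\salg_G(\{A,B\})$-expressible with $\dom(\theta) = A$ and $\rng(\theta) \subseteq B$. The main obstacle is verifying $\mu(A_\infty) = 0$, which is where ergodicity is essential; everything else is bookkeeping.
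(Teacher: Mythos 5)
Your proof is correct and follows essentially the same construction as the paper: a greedy exhaustion along an enumeration of $G$, with $C_n = Z_{g_n}^\theta$ matching the paper's inductive definition exactly. The only superficial difference is in how the exhaustion is shown to be complete — you track the measure identity $\mu(A) - \mu(A_\infty) = \mu(B) - \mu(B_\infty)$ and invoke ergodicity via the invariant set $G \cdot A_\infty$, while the paper reasons directly from $\mu(\dom(\theta)) < \mu(B)$ and the existence of some $g_n$ carrying a positive-measure piece of the leftover of $A$ into the leftover of $B$; these are the same argument with different bookkeeping.
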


\begin{proof}
Let $g_0, g_1, \ldots$ be an enumeration of $G$. Set $Z_{g_0}^\theta = A \cap g_0^{-1} \cdot B$ and inductively define
$$Z_{g_n}^\theta = \Big( A \setminus \Big( \textstyle{\bigcup_{i = 0}^{n-1} Z_{g_i}^\theta} \Big) \Big) \bigcap g_n^{-1} \cdot \Big( B \setminus \Big( \textstyle{\bigcup_{i = 0}^{n - 1} g_i \cdot Z_{g_i}^\theta} \Big) \Big).$$
Define $\theta : \bigcup_{n \in \N} Z_{g_n}^\theta \rightarrow B$ by setting $\theta(x) = g_n \cdot x$ for $x \in Z_{g_n}^\theta$. Clearly $\theta$ is $\salg_G(\{A, B\})$-expressible.

Set $C = A \setminus \dom(\theta)$. Towards a contradiction, suppose that $\mu(C) > 0$. Then we have
$$\mu(\rng(\theta)) = \mu(\dom(\theta)) < \mu(A) \leq \mu(B).$$
So $\mu(B \setminus \rng(\theta)) > 0$ and by ergodicity there is $n \in \N$ with
$$\mu \Big( C \cap g_n^{-1} \cdot (B \setminus \rng(\theta)) \Big) > 0.$$
However, this implies that $\mu(C \cap Z_{g_n}^\theta) > 0$, a contradiction. We conclude that, up to a null set, $\dom(\theta) = A$.
\end{proof}

\begin{cor} \label{COR MAKEPART}
Let $G \acts (X, \mu)$ be a {\pmp} ergodic action. If $C \subseteq B \subseteq X$ and $\mu(C) = \frac{1}{n} \cdot \mu(B)$ with $n \in \N$, then there is a $\salg_G(\{C, B\})$-measurable partition $\xi$ of $B$ into $n$ pieces with each piece having measure $\frac{1}{n} \cdot \mu(B)$ and with $C \in \xi$.
\end{cor}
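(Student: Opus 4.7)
The idea is to iteratively produce $n-1$ ``copies'' of $C$ inside $B \setminus C$ using Lemma~\ref{LEM SIMPLEMIX}, keeping careful track of expressibility at each stage. If $n = 1$ the trivial partition $\xi = \{C\} = \{B\}$ works, so assume $n \geq 2$ and $\mu(B) > 0$.

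I construct pairwise-disjoint Borel sets $C_0 = C, C_1, \ldots, C_{n-1} \subseteq B$, each of measure $\mu(B)/n$ and each $\salg_G(\{C, B\})$-measurable, by induction. Suppose $C_0, \ldots, C_k$ have been constructed for some $k < n - 1$. Set $B_k = B \setminus (C_0 \cup \cdots \cup C_k)$; by induction $B_k \in \salg_G(\{C, B\})$ and $\mu(B_k) = \frac{n-k-1}{n} \mu(B) \geq \mu(B)/n = \mu(C) > 0$. Apply Lemma~\ref{LEM SIMPLEMIX} to the pair $(C, B_k)$ to obtain $\theta_{k+1} \in [[E_G^X]]$ with $\dom(\theta_{k+1}) = C$, $\rng(\theta_{k+1}) \subseteq B_k$, and such that $\theta_{k+1}$ is $\salg_G(\{C, B_k\})$-expressible. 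Since $B_k \in \salg_G(\{C, B\})$, we have $\salg_G(\{C, B_k\}) \subseteq \salg_G(\{C, B\})$, so $\theta_{k+1}$ is in fact $\salg_G(\{C, B\})$-expressible. Set $C_{k+1} = \rng(\theta_{k+1}) = \theta_{k+1}(C)$; by Lemma~\ref{LEM EXPMOVE}, $C_{k+1} \in \salg_G(\{C\}) \vee \salg_G(\{C, B\}) = \salg_G(\{C, B\})$, and since $\theta_{k+1}$ is measure-preserving we have $\mu(C_{k+1}) = \mu(C) = \mu(B)/n$.

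After $n-1$ steps, $C_0, \ldots, C_{n-1}$ are pairwise-disjoint $\salg_G(\{C, B\})$-measurable subsets of $B$, each of measure $\mu(B)/n$, so their union exhausts $B$ up to a null set. Thus $\xi = \{C_0, \ldots, C_{n-1}\}$ is the desired partition with $C_0 = C \in \xi$.

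The only genuine content is the iterative application of Lemma~\ref{LEM SIMPLEMIX}; the step that requires care is the bookkeeping needed to ensure the intermediate sets $B_k$ stay in $\salg_G(\{C, B\})$ so that the resulting pieces $C_{k+1}$ also lie in $\salg_G(\{C, B\})$, which is immediate from Lemma~\ref{LEM EXPMOVE} once $\theta_{k+1}$ is known to be $\salg_G(\{C, B\})$-expressible.
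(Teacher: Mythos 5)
Your proof is correct and follows essentially the same route as the paper: iteratively apply Lemma~\ref{LEM SIMPLEMIX} to $(C, B\setminus(\text{pieces so far}))$, then use Lemma~\ref{LEM EXPMOVE} to keep each new piece inside $\salg_G(\{C,B\})$. If anything you are a touch more careful than the paper's wording in spelling out why the map produced by Lemma~\ref{LEM SIMPLEMIX} applied to $(C,B_k)$ is $\salg_G(\{C,B\})$-expressible (namely because $B_k \in \salg_G(\{C,B\})$ by the inductive hypothesis), a step the paper states implicitly.
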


\begin{proof}
Set $C_1 = C$. Once $\salg_G(\{C, B\})$-measurable subsets $C_1, \ldots, C_{k-1}$ of $B$, each of measure $\frac{1}{n} \cdot \mu(B)$, have been defined, we apply Lemma \ref{LEM SIMPLEMIX} to get a $\salg_G(\{C, B\})$-expressible function $\theta \in [[E_G^X]]$ with $\dom(\theta) = C$ and
$$\rng(\theta) \subseteq B \setminus (C_1 \cup \cdots \cup C_{k-1}).$$
We set $C_k = \theta(C)$. We note that $\mu(C_k) = \frac{1}{n} \cdot \mu(B)$ and $C_k \in \salg_G(\{C, B\})$ by Lemma \ref{LEM EXPMOVE}. Finally, set $\xi = \{C_1, \ldots, C_n\}$.
\end{proof}

In the corollary below we write $\id_A \in [[E_G^X]]$ for the identity function on $A$ for $A \subseteq X$.

\begin{cor} \label{COR PERMUTE}
Let $G \acts (X, \mu)$ be an ergodic {\pmp} action. If $\xi = \{C_1, \ldots, C_n\}$ is a collection of pairwise disjoint Borel sets of equal measure, then there is a $\salg_G(\xi)$-expressible function $\theta \in [[E_G^X]]$ which cyclically permutes the members of $\xi$, meaning that $\dom(\theta) = \rng(\theta) = \cup \xi$, $\theta(C_k) = C_{k+1}$ for $1 \leq k < n$, $\theta(C_n) = C_1$, and $\theta^n = \id_{\cup \xi}$.
\end{cor}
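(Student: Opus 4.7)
The plan is to build $\theta$ as a concatenation of partial bijections $\theta_k : C_k \to C_{k+1}$ (for $k = 1, \dots, n-1$) together with a closing map $\theta_n : C_n \to C_1$, and to choose $\theta_n$ precisely so that the full composition around the cycle is the identity. The raw material for the $\theta_k$ comes from Lemma \ref{LEM SIMPLEMIX}, and the expressibility of $\theta$ will follow from Lemma \ref{LEM EXPGROUP}.

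More concretely, for each $1 \leq k \leq n-1$, since $\mu(C_k) = \mu(C_{k+1})$, I would apply Lemma \ref{LEM SIMPLEMIX} with $A = C_k$ and $B = C_{k+1}$ to obtain a $\salg_G(\{C_k, C_{k+1}\})$-expressible element $\theta_k \in [[E_G^X]]$ with $\dom(\theta_k) = C_k$ and $\rng(\theta_k) \subseteq C_{k+1}$. Because $\theta_k$ is measure-preserving and $\mu(C_k) = \mu(C_{k+1})$, we in fact have $\rng(\theta_k) = C_{k+1}$ up to a null set. Since $\salg_G(\{C_k, C_{k+1}\}) \subseteq \salg_G(\xi)$, each $\theta_k$ is $\salg_G(\xi)$-expressible. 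Now define
$$\theta_n = \bigl( \theta_{n-1} \circ \theta_{n-2} \circ \cdots \circ \theta_1 \bigr)^{-1} \: C_n \to C_1.$$
By iterated application of Lemma \ref{LEM EXPGROUP}, $\theta_n$ is $\salg_G(\xi)$-expressible as well. Finally, set $\theta(x) = \theta_k(x)$ for $x \in C_k$, $1 \leq k \leq n$. Since the $C_k$ are pairwise disjoint and $\{\theta_k\}$ have disjoint domains and disjoint ranges mapping $C_k$ onto $C_{k+1}$ (indices mod $n$), this $\theta$ is a well-defined bijection from $\cup \xi$ to itself; combining the $\cF$-measurable decompositions witnessing expressibility of each $\theta_k$ (restricted to its domain $C_k$) yields one for $\theta$, so $\theta$ is $\salg_G(\xi)$-expressible.

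It remains to check $\theta^n = \id_{\cup \xi}$. For $x \in C_1$ this holds by definition of $\theta_n$. For $x \in C_k$ with $k > 1$, one has
$$\theta^n(x) = \theta_{k-1} \circ \cdots \circ \theta_1 \circ \theta_n \circ \theta_{n-1} \circ \cdots \circ \theta_k(x),$$
and substituting $\theta_n = \theta_1^{-1} \circ \cdots \circ \theta_{n-1}^{-1}$ makes the middle factors cancel telescopically, leaving $\id_{C_k}$. I expect no substantive obstacle: the only subtlety is bookkeeping to ensure the closing map $\theta_n$ remains in the pseudo-group (immediate, since inverses and compositions of pseudo-group elements stay in the pseudo-group) and remains $\salg_G(\xi)$-expressible (exactly the content of Lemma \ref{LEM EXPGROUP}).
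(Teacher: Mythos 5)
Your proof is correct and uses essentially the same strategy as the paper: obtain the required partial bijections from Lemma \ref{LEM SIMPLEMIX}, close the cycle so that $\theta^n=\id$, and invoke Lemma \ref{LEM EXPGROUP} for $\salg_G(\xi)$-expressibility. The only cosmetic difference is that the paper routes all maps through the hub $C_1$ (taking $\phi_k\colon C_1\to C_k$ and setting $\theta\res C_k=\phi_{k+1}\circ\phi_k^{-1}$, which makes $\theta^n=\id$ automatic), whereas you build the chain $\theta_k\colon C_k\to C_{k+1}$ directly and define the closing map $\theta_n$ as the inverse of the full composition; the two parametrizations are equivalent.
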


\begin{proof}
By Lemma \ref{LEM SIMPLEMIX}, for each $2 \leq k \leq n$ there is a $\salg_G(\xi)$-expressible function $\phi_k \in [[E_G^X]]$ with $\dom(\phi_k) = C_1$ and $\rng(\phi_k) = C_k$. We define $\theta : \cup \xi \rightarrow \cup \xi$ by
$$\theta(x) = \begin{cases}
\phi_2(x) & \text{if } x \in C_1\\
\phi_{k+1} \circ \phi_k^{-1}(x) & \text{if } x \in C_k \text{ and } 1 < k < n\\
\phi_n^{-1}(x) & \text{if } x \in C_n.
\end{cases}$$
Then $\theta$ cyclically permutes the members of $\xi$ and has order $n$. Finally, each restriction $\theta \res C_k$ is $\salg_G(\xi)$-expressible by Lemma \ref{LEM EXPGROUP} and thus $\theta$ is $\salg_G(\xi)$-expressible.
\end{proof}

\section{Countably infinite partitions} \label{SECT INFT}

In this section, we show how to replace countably infinite partitions by finite ones. This will allow us to carry-out counting arguments in proving the main theorem. Our work in this section retraces and improves upon methods used by the author in \cite{S12}. We improve upon \cite{S12} in two ways. First, we work in a relative setting where a $G$-invariant sub-$\sigma$-algebra $\cF$ is given, and second we show that one can control the Shannon entropy of the newly constructed partition.

For a finite set $S$ we let $S^{< \omega}$ denote the set of all finite words with letters in $S$ (the $\omega$ in the superscript denotes the first infinite ordinal). For $z \in S^{< \omega}$ we let $|z|$ denote the length of the word $z$. The lemma below is a relativized version of a similar result due to Krieger \cite{Kr70}.

\begin{lem} \label{LEM KRIEGER}
Let $(X, \mu)$ be a probability space, let $\cF$ be a sub-$\sigma$-algebra, let $(Y, \nu)$ be the associated factor of $(X, \mu)$, and let $\mu = \int_Y \mu_y \ d \nu(y)$ be the disintegration of $\mu$ over $\nu$. If $\xi$ is a countable Borel partition of $X$ with $\sH(\xi | \cF) < \infty$, then there is a Borel function $L : Y \times \xi \rightarrow \{0, 1, 2\}^{<\omega}$ which has finite average length
$$\int_Y \sum_{C \in \xi} |L(y, C)| \cdot \mu_y(C) \ d \nu(y) < \infty$$
and such that $\nu$-almost-every restriction $L(y, \cdot) : \xi \rightarrow \{0, 1, 2\}^{< \omega}$ is essentially injective in the sense that $L(y, C) = L(y, C')$ and $C \neq C'$ implies $\mu_y(C) \cdot \mu_y(C') = 0$.
\end{lem}

\begin{proof}
If $\xi$ is finite then we can simply fix an injection $L : \xi \rightarrow \{0, 1, 2\}^k$ for some $k \in \N$. So suppose that $\xi$ is infinite. Say $\xi = \{C_1, C_2, \ldots\}$. For $y \in Y$ let $\sigma(y) : \N \rightarrow \N$ be the unique bijection satisfying for all $n \in \N$: either $\mu_y(C_{\sigma(y)(n+1)}) < \mu_y(C_{\sigma(y)(n)})$ or else $\mu_y(C_{\sigma(y)(n+1)}) = \mu_y(C_{\sigma(y)(n)})$ and $\sigma(y)(n+1) > \sigma(y)(n)$. Since each map $y \mapsto \mu_y(C_k)$ is Borel (see \S\ref{SECT PRELIM}), we see that $\sigma : Y \rightarrow \N^\N$ is Borel.

For each $n$ let $t(n) \in \{0, 1, 2\}^{<\omega}$ be the ternary expansion of $n$. Note that $|t(n)| \leq \log_3(n) + 1$. For $y \in Y$ define
$$L(y, C_{\sigma(y)(n)}) = t(n) \text{ for } n \in \N \quad \text{and} \quad L(y, C_k) = t(1) \text{ for } k \in \N \setminus \sigma(y)(\N).$$
If $|t(n)| = |L(y, C_{\sigma(y)(n)})| > - \log \mu_y(C_{\sigma(y)(n)})$ then for all $k \leq n$
$$\mu_y(C_{\sigma(y)(k)}) \geq \mu_y(C_{\sigma(y)(n)}) > e^{-|t(n)|} \geq \frac{1}{e} \cdot e^{- \log_3(n)} = \frac{1}{e} \cdot n^{- \log_3(e)}.$$
Thus
$$\frac{1}{e} \cdot n^{1 - \log_3(e)} = n \cdot \frac{1}{e} \cdot n^{- \log_3(e)} < \sum_{k = 1}^n \mu_y(C_{\sigma(y)(k)}) \leq 1,$$
and hence $n \leq \exp(1 / (1 - \log_3(e)))$. Letting $m$ be the least integer greater than $\exp(1 / (1 - \log_3(e)))$, we have that $|L(y, C_{\sigma(y)(n)})| \leq - \log \mu_y(C_{\sigma(y)(n)})$ for all $y \in Y$ and all $n > m$. Therefore, recalling that $\mu_y(C_k) = 0$ for all $k \in \N \setminus \sigma(y)(\N)$, we have
\begin{align*}
\sum_{n \in \N} |L(y, C_n)| \cdot \mu_y(C_n) & = \sum_{n \in \N} |L(y, C_{\sigma(y)(n)})| \cdot \mu_y(C_{\sigma(y)(n)})\\
 & \leq m \cdot |t(m)| + \sum_{n > m} |L(y, C_{\sigma(y)(n)})| \cdot \mu_y(C_{\sigma(y)(n)}) \\
 & \leq m \cdot |t(m)| + \sum_{n \in \N} - \mu_y(C_n) \log \mu_y(C_n) \\
 & = m \cdot |t(m)| + \sH_{\mu_y}(\xi).
\end{align*}
Integrating both sides over $Y$ and using $\int_Y \sH_{\mu_y}(\xi) \ d \nu(y) = \sH(\xi | \cF) < \infty$ completes the proof.
\end{proof}

\begin{prop} \label{PROP FINGEN}
Let $G \acts (X, \mu)$ be an ergodic {\pmp} action, let $\cF$ be a $G$-invariant sub-$\sigma$-algebra, and let $\xi$ be a countable Borel partition with $\sH(\xi | \cF) < \infty$. Then for every $\epsilon > 0$ there is a finite Borel partition $\alpha$ with $\salg_G(\alpha) \vee \cF = \salg_G(\xi) \vee \cF$ and $\sH(\alpha | \cF) < \sH(\xi | \cF) + \epsilon$.
\end{prop}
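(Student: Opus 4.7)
The strategy is to truncate $\xi$ to a finite partition capturing nearly all its conditional entropy, and to encode the residual $\xi$-information via a ternary code of finite expected length distributed across $G$-translates. Enumerate $\xi = \{C_0, C_1, \ldots\}$ and set $\xi_m = \{C_0, \ldots, C_{m-1}, J_m\}$ where $J_m = \bigcup_{i \geq m} C_i$. Since the $\xi_m$ form an increasing sequence of finite partitions with $\bigvee_m \salg(\xi_m) = \salg(\xi)$, Lemma \ref{LEM SHAN}(v) applied with $\alpha = \xi$, $\beta = \xi_m$ (noting $\xi \vee \xi_m = \xi$) together with (vi) gives $\sH(\xi \mid \xi_m \vee \cF) = \sH(\xi \mid \cF) - \sH(\xi_m \mid \cF) \to 0$ as $m \to \infty$. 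Fix $m$ large so this quantity is below $\epsilon/3$.

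Apply Lemma \ref{LEM KRIEGER} to obtain a Borel map $L : Y \times \xi \to \{0, 1, 2\}^{<\omega}$, with $(Y, \nu)$ the factor associated to $\cF$ and $\mu = \int \mu_y\,d\nu(y)$ the disintegration, such that $L(y, \cdot)$ is injective $\nu$-a.e.\ and $\int_Y \sum_C |L(y, C)|\,\mu_y(C)\,d\nu(y) < \infty$. The finiteness of this expected length is crucial: by enlarging $m$ if necessary we may arrange that the expected codeword length on $J_m$ is arbitrarily small. For $x \in J_m$, write $w(x) := L(\pi(x), C(x))$; by injectivity of $L(\pi(x), \cdot)$, knowing $\pi(x)$ and $w(x)$ determines $C(x)$.

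The core construction is to ``annotate'' each $x \in J_m$ with the letters of $w(x)$ at a Borel sequence of distinct $G$-translates. Using the marker lemma (Lemma \ref{LEM MARKER}) iteratively --- in the spirit of the constructions in Sections \ref{SECT ACT}--\ref{SECT ALG} --- one produces, for each $x \in J_m$, a Borel sequence $g_1(x) \cdot x, g_2(x) \cdot x, \ldots, g_{|w(x)|}(x) \cdot x$ of distinct $G$-translates, pairwise disjoint as $x$ varies over $J_m$, and arranged so that the resulting annotation region $A \subseteq X$ and the individual slots are recoverable from $\cF$ together with $G$-translates of $\alpha$. Define $\alpha$ over the finite alphabet $\Sigma = \xi_m \times \{\bot, 0, 1, 2, \#\}$ by recording at each $x \in A$ the pair $(\xi_m(x), \text{letter of the codeword it annotates})$ (with $\#$ as an end-of-word marker), and $(\xi_m(x), \bot)$ elsewhere.

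Reconstruction is then routine: $\cF$ gives $y = \pi(x)$; $\alpha(x)$ gives $\xi_m(x)$; if $x \in J_m$ then one reads the $\alpha$-labels at the annotation slots of $x$ (which are $G$-translates of $x$, hence accessible in $\salg_G(\alpha)$) to recover $w(x)$, and injectivity of $L(y, \cdot)$ recovers $C(x) \in \xi$. For the entropy bound, Lemma \ref{LEM SHAN}(v) gives
\[
\sH(\alpha \mid \cF) = \sH(\xi_m \mid \cF) + \sH(\alpha \mid \xi_m \vee \cF) \leq \sH(\xi \mid \cF) + \mu(A) \cdot \log 5,
\]
and we have arranged $\mu(A) \log 5 < \epsilon$ by choice of $m$. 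The main obstacle is the annotation step: producing a Borel, globally disjoint assignment of annotation slots whose structure is readable from $\alpha$ and $\cF$, particularly when the $G$-action is not free. This is where the marker technology developed earlier in the paper is essential.
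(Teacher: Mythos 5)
Your opening moves agree with the paper: Lemma~\ref{LEM KRIEGER} is indeed the entry point, truncation to a finite partition is the right first reduction, and the structure of the entropy estimate via Lemma~\ref{LEM SHAN}(v) is essentially the one used. But the central step of your plan --- the ``annotation'' construction --- is not carried out, and the route you propose for it is not the paper's route and has a genuine gap.

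The paper does not use the marker technology (Lemma~\ref{LEM MARKER} / Lemma~\ref{LEM SMARKER}) for this proposition at all; those appear only in \S\ref{SECT ACT}--\S\ref{SECT ALG}. Instead, the paper works entirely inside the pseudo-group: it sets $P_n = \{x : |\ell(x)| \geq n\}$, fixes $N$ with $\sum_{n \geq N} \mu(P_n) < \delta$, and then builds measure-preserving maps $\theta_n \in [[E_G^X]]$ (via Lemma~\ref{LEM SIMPLEMIX}) that relocate each $P_n$ ($n \geq N$) into $X \setminus \bigl(P_N \cup \bigcup_{k<n}\theta_k(P_k)\bigr)$. The images are automatically pairwise disjoint by construction, the total measure of $P = \bigcup_{n\geq N}\theta_n(P_n)$ is controlled by $\sum \mu(P_n)$, and the $n$-th ternary digit of $\ell(x)$ is stored at $\theta_n(x)$. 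Recovery is inductive and hinges on expressibility (Lemmas~\ref{LEM EXPMOVE}, \ref{LEM EXPGROUP}): $\gamma_k \subseteq \salg_G(\alpha)$ forces $\theta_k$ to be $\salg_G(\alpha)$-expressible, which lets one recover $P_{k+1} = \theta_k^{-1}(Q)$ and then the $(k{+}1)$-st digit from $\theta_{k+1}^{-1}(B^i)$. No finite window of $G$-translates, no end-of-word marker, and no assumption of freeness is needed.

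Your marker-based scheme, by contrast, runs into three obstacles that you flag but do not resolve. First, the marker lemma controls disjointness of $F$-translates for a \emph{fixed finite} $F$, whereas codewords $w(x)$ have unbounded length, so the slot sequence $g_1(x)\cdot x, \ldots, g_{|w(x)|}(x)\cdot x$ would require infinitely many incompatible applications of the lemma, and the total annotation region cannot be bounded by a single marker application. Second, for non-free actions, $g \cdot x$ need not be distinct for distinct $g$, so ``distinct $G$-translates'' as slots is not even well-defined without further work (this is precisely the kind of problem the pseudo-group formulation avoids, since elements of $[[E_G^X]]$ are honest Borel bijections regardless of freeness). Third, there is a circularity in making the slots ``readable from $\alpha$ and $\cF$'': the positions of the slots depend on $|w(x)|$, which is exactly the information $\alpha$ is supposed to encode. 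The paper resolves this by recording, via the two-set partition $\cQ$, a ``continue/stop'' bit at each $\theta_n(x)$, and that bit lives at a location already determined by previously recovered data --- this is the inductive bookkeeping you would need to reproduce, and it does not obviously transfer to the marker picture. In short: the gap is not a matter of filling in routine details; the paper's proof replaces your annotation step with a structurally different device, and without it the argument does not close.
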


\begin{proof}
Let $\pi : (X, \mu) \rightarrow (Y, \nu)$ be the factor map associated to $\cF$, and let $\mu = \int \mu_y \ d \nu(y)$ be the disintegration of $\mu$ over $\nu$. Apply Lemma \ref{LEM KRIEGER} to obtain a Borel function $L: Y \times \xi \rightarrow \{0, 1, 2\}^{< \omega}$ such that $\nu$-almost-every restriction $L(y, \cdot) : \xi \rightarrow \{0, 1, 2\}^{<\omega}$ is essentially injective and
$$\int_Y \sum_{C \in \xi} |L(y, C)| \cdot \mu_y(C) \ d \nu(y) < \infty.$$
We define $\ell : X \rightarrow \{0, 1, 2\}^{< \omega}$ by
$$\ell(x) = L(\pi(x), C)$$
for $x \in C \in \xi$. Observe that $\ell$ is $\salg(\xi) \vee \cF$-measurable and
$$\int_X |\ell(x)| \ d \mu(x) = \int_Y \int_X |\ell(x)| \ d \mu_y(x) \ d \nu(y) = \int_Y \sum_{C \in \xi} |L(y, C)| \cdot \mu_y(C) \ d \nu(y) < \infty.$$

For $n \in \N$ let $\cP_n = \{P_n, X \setminus P_n\}$ where
$$P_n = \{x \in X \: |\ell(x)| \geq n\}.$$
Then the $P_n$'s are decreasing and have empty intersection. Refine $\cP_n$ to $\beta_n = \{X \setminus P_n, B_n^0, B_n^1, B_n^2\}$ where for $i \in \{0, 1, 2\}$
$$B_n^i = \{x \in P_n \: \ell(x)(n) = i\}.$$
For $n \in \N$ define
$$\gamma_n = \bigvee_{k \leq n} \beta_k.$$
Since each restriction $L(y, \cdot) : \xi \rightarrow \{0, 1, 2\}^{< \omega}$ is essentially injective we have that
\begin{equation} \label{EQN ALPHA}
\xi \subseteq \cF \vee \bigvee_{n \in \N} \salg(\gamma_n).
\end{equation}

Fix $0 < \delta < \min(1/4, \epsilon / 2)$ with
$$-\delta \cdot \log(\delta) - (1 - \delta) \cdot \log(1 - \delta) + \delta \cdot \log(7) < \epsilon.$$
Since
$$\sum_{n \in \N} \mu(P_n) = \int_X |\ell(x)| \ d \mu(x) < \infty$$
we may fix $N \in \N$ so that $\sum_{n = N}^\infty \mu(P_n) < \delta$. Observe that in particular $\mu(P_N) < \delta$ and thus
$$\mu(P_N) + \sum_{n = N}^\infty \mu(P_n) < 2 \delta < 1 / 2.$$

For $n \geq N$ we seek to build $\salg_G(\cP_n \vee \gamma_{n-1})$-expressible functions $\theta_n \in [[E_G^X]]$ with $\dom(\theta_n) = P_n$ and
$$\rng(\theta_n) \subseteq X \setminus \left( P_N \cup \bigcup_{k = N}^{n-1} \theta_k(P_k) \right).$$
We build the $\theta_n$'s by induction on $n \geq N$. To begin we note that $\mu(P_N) < \mu(X \setminus P_N)$ and we apply Lemma \ref{LEM SIMPLEMIX} to obtain $\theta_N$. Now assume that $\theta_N, \ldots, \theta_{n-1}$ have been defined and posses the properties stated above. Then since $\gamma_{n-1}$ refines $\cP_k \vee \gamma_{k-1}$ for every $k < n$, we obtain from Lemma \ref{LEM EXPMOVE}
$$P_N \cup \bigcup_{k = N}^{n-1} \theta_k(P_k) \in \salg_G(\gamma_{n-1}).$$
Also, by our choice of $N$ we have that
\begin{align*}
\mu(P_n) \leq \mu(P_N) < \frac{1}{2} < 1 - 2 \delta & < 1 - \mu(P_N) - \sum_{k = N}^{n-1} \mu(P_k)\\
 & = \mu \left( X \setminus \Big( P_N \cup \textstyle{\bigcup_{k = N}^{n-1}} \, \theta_k(P_k) \Big) \right).
\end{align*}
Therefore we may apply Lemma \ref{LEM SIMPLEMIX} to obtain $\theta_n$. This defines the functions $\theta_n$, $n \geq N$.

Define the partition $\beta = \{X \setminus P, B^0, B^1, B^2\}$ of $X$ by
\begin{align*}
P & = \bigcup_{n \geq N} \theta_n(P_n); \\
B^i & = \bigcup_{n \geq N} \theta_n(B_n^i).
\end{align*}
Note that the above expressions do indeed define a partition of $X$ since the images of the $\theta_n$'s are pairwise disjoint. Also define $\cQ = \{Q, X \setminus Q\}$ where
$$Q = \bigcup_{n \geq N} \theta_n(P_{n+1}).$$
Note that $Q$ is contained in $P$ and so $\beta$ might not refine $\cQ$. Set $\alpha = \gamma_N \vee \beta \vee \cQ$. Then $\alpha$ is finite. Using Lemma \ref{LEM SHAN} and the facts that $X \setminus P \in \beta \vee \cQ$, $\mu(P) < \delta$, and $\sH_{\mu_y}(\gamma_N) \leq \sH_{\mu_y}(\xi)$ for $\nu$-almost-every $y \in Y$ (since $\xi$ $\mu_y$-almost-everywhere refines $\gamma_N$), we obtain
\begin{align*}
\sH(\alpha | \cF) & \leq \sH(\gamma_N | \cF) + \sH(\beta \vee \cQ) \\
 & = \sH(\gamma_N | \cF) + \sH(\{P, X \setminus P\}) + \sH(\beta \vee \cQ | \{P, X \setminus P\}) \\
 & \leq \sH(\gamma_N | \cF) - \mu(P) \cdot \log \mu(P) - \mu(X \setminus P) \log \mu(X \setminus P) + \mu(P) \cdot \log(7) \\
 & < \sH(\gamma_N | \cF) + \epsilon \\
 & = \int_Y \sH_{\mu_y}(\gamma_N) \ d \nu(y) + \epsilon \\
 & \leq \int_Y \sH_{\mu_y}(\xi) \ d \nu(y) + \epsilon \\
 & = \sH(\xi | \cF) + \epsilon.
\end{align*}
Thus it only remains to check that $\salg_G(\alpha) \vee \cF = \salg_G(\xi) \vee \cF$.

First notice that the function $\ell$ and all of the partitions $\gamma_n$ and $\cP_n$ are $\salg_G(\xi) \vee \cF$-measurable and therefore each $\theta_k$ is $\salg_G(\xi) \vee \cF$-expressible. It follows from Lemma \ref{LEM EXPMOVE} that $\beta$, $\cQ$, and $\alpha$ are $\salg_G(\xi) \vee \cF$-measurable. Thus $\salg_G(\alpha) \vee \cF \subseteq \salg_G(\xi) \vee \cF$. Now we consider the reverse inclusion. By induction and by (\ref{EQN ALPHA}) it suffices to assume that $\gamma_k \subseteq \salg_G(\alpha)$ and prove that $\gamma_{k+1} \subseteq \salg_G(\alpha)$ as well. This is immediate when $k \leq N$. So assume that $k \geq N$ and that $\gamma_k \subseteq \salg_G(\alpha)$. Since $\theta_k$ is expressible with respect to $\salg_G(\gamma_k) \subseteq \salg_G(\alpha)$, we have that
$$P_{k+1} = \theta_k^{-1}(Q) \in \salg_G(\alpha)$$
by Lemmas \ref{LEM EXPMOVE} and \ref{LEM EXPGROUP}. Therefore $\cP_{k+1} \subseteq \salg_G(\alpha)$. Now since $\theta_{k+1}$ is expressible with respect to $\salg_G(\cP_{k+1} \vee \gamma_k) \subseteq \salg_G(\alpha)$ we have that for $i \in \{0, 1, 2\}$
$$B_{k+1}^i = \theta_{k+1}^{-1}(B^i) \in \salg_G(\alpha)$$
by Lemmas \ref{LEM EXPMOVE} and \ref{LEM EXPGROUP}. Thus $\beta_{k+1} \subseteq \salg_G(\alpha)$ and we conclude that $\gamma_{k+1} \subseteq \salg_G(\alpha)$. This completes the proof.
\end{proof}

\section{Finite subequivalence relations} \label{SECT EQREL}

The methods of the previous section produce finite generating partitions but do not provide any control over the cardinality or distribution of the partition constructed. Overcoming this difficulty is the main focus of this paper and requires entirely new techniques. We develop these techniques in this section and in Section \ref{SECT ACT}. The goal of this section is to construct finite subequivalence relations which will ultimately be used to replace the traditional role of the Rokhlin lemma and the Shannon--McMillan--Breiman theorem.

For an equivalence relation $E$ on $X$ and $x \in X$, we write $[x]_E$ for the $E$-class of $x$. Recall that a set $T \subseteq X$ is a \emph{transversal} for $E$ if $|T \cap [x]_E| = 1$ for almost-every $x \in X$. We will work with equivalence relations which are generated by an element of the pseudo-group in the following sense.

\begin{defn}
Let $G \acts (X, \mu)$ be a {\pmp} action, let $B \subseteq X$ be a Borel set of positive measure, and let $E$ be an equivalence relation on $B$ with $E \subseteq E_G^X \cap B \times B$. We say that $E$ is \emph{generated by} $\theta \in [[E_G^X]]$ if $\dom(\theta) = \rng(\theta) = B$ and $[x]_E = \{ \theta^i(x) \: i \in \Z\}$ for almost-all $x \in B$. In this case, we write $E = E_\theta$.
\end{defn}

\begin{lem} \label{LEM AVGMIX}
Let $G \acts (X, \mu)$ be an ergodic {\pmp} action, let $B \subseteq X$ have positive measure, let $\alpha$ be a finite partition of $X$, and let $\epsilon > 0$. Then there is an equivalence relation $E$ on $B$ with $E \subseteq E_G^X \cap B \times B$ and $n \in \N$ so that for $\mu$-almost-every $x \in B$, the $E$-class of $x$ has cardinality $n$ and
$$\forall A \in \alpha \qquad \frac{\mu(A \cap B)}{\mu(B)} - \epsilon < \frac{|A \cap [x]_E|}{|[x]_E|} < \frac{\mu(A \cap B)}{\mu(B)} + \epsilon.$$
Moreover, $E$ admits a $\salg_G(\alpha \cup \{B\})$-measurable transversal and is generated by a $\salg_G(\alpha \cup \{B\})$-expressible function $\theta : B \rightarrow B$ in $[[E_G^X]]$ which satisfies $\theta^n = \id_B$.
\end{lem}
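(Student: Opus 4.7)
The plan is to use Lemma \ref{LEM RATCOMB} to reduce the proportions $a_i := \mu(A_i \cap B)/\mu(B)$ to a convex combination of rational proportions with common denominator $n$, then to realize each ``rational type'' as a cyclic permutation of a partition of $B$ into equal-measure pieces, and finally to glue these cycles into a single $\theta : B \to B$ of order $n$. Concretely, write $\alpha = \{A_1, \dots, A_p\}$, set $a_i = \mu(A_i \cap B)/\mu(B)$, and apply Lemma \ref{LEM RATCOMB} with tolerance $\epsilon$ to obtain $n \in \N$, probability vectors $\rv^j = (k_i^j/n)_i$ with integer numerators $k_i^j \geq 0$, and convex coefficients $\cv = (c_j)$ satisfying $\av = \sum_{j=1}^p c_j \rv^j$ and $|a_i - k_i^j/n| < \epsilon$ for all $i,j$.

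The heart of the construction is to produce a $\salg_G(\alpha \cup \{B\})$-measurable partition $B = \bigsqcup_{j=1}^p B^{(j)}$ with $\mu(B^{(j)}) = c_j \mu(B)$ and $\mu(A_i \cap B^{(j)}) = c_j r_i^j \mu(B)$ for every $i,j$; equivalently, each $A_i \cap B$ should be split into $p$ pieces of prescribed measures $c_j r_i^j \mu(B)$, which correctly sum to $a_i \mu(B) = \mu(A_i \cap B)$. Granting this, for each $j$ the set $A_i \cap B^{(j)}$ has measure $k_i^j \cdot \mu(B^{(j)})/n$, so Corollary \ref{COR MAKEPART} applied successively cuts each $A_i \cap B^{(j)}$ into $k_i^j$ pieces of measure $\mu(B^{(j)})/n$; collecting these pieces over $i$ yields a partition $\xi^{(j)} = \{D_1^{(j)}, \dots, D_n^{(j)}\}$ of $B^{(j)}$ into $n$ equal-measure pieces, of which exactly $k_i^j$ lie inside $A_i$. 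Corollary \ref{COR PERMUTE} then produces a $\salg_G(\alpha \cup \{B\})$-expressible cyclic permutation $\theta_j : B^{(j)} \to B^{(j)}$ with $\theta_j^n = \id_{B^{(j)}}$, and gluing via $\theta(x) = \theta_j(x)$ for $x \in B^{(j)}$ gives $\theta \in [[E_G^X]]$ which is $\salg_G(\alpha \cup \{B\})$-expressible by Lemma \ref{LEM EXPGROUP} and satisfies $\theta^n = \id_B$. Every $\theta$-orbit inside $B^{(j)}$ visits each $D_s^{(j)}$ exactly once, so contains exactly $k_i^j$ elements of $A_i$; hence $E = E_\theta$ has all classes of size $n$, the proportion of $A_i$ in any class contained in $B^{(j)}$ is $r_i^j$ (within $\epsilon$ of $a_i$), and $T = \bigsqcup_j D_1^{(j)}$ is a $\salg_G(\alpha \cup \{B\})$-measurable transversal.

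The main obstacle is constructing the initial partition $\{B^{(j)}\}$ inside $\salg_G(\alpha \cup \{B\})$, since the coefficients $c_j$ are generally irrational and so the required sub-piece measures $c_j r_i^j \mu(B)$ are not obviously realizable by the combinatorial tools developed in Section \ref{SECT PSEUDO}. The plan is to handle this via iterated applications of Lemma \ref{LEM SIMPLEMIX}, transporting mass between $A_i \cap B$ and $G$-translates of $\alpha$-pieces: ergodicity of $G \acts (X,\mu)$ forces the countably-generated sub-$\sigma$-algebra $\salg_G(\alpha \cup \{B\})$ to be rich enough---essentially non-atomic, in the spirit of a Sierpi\'nski--Lyapunov argument---to realize subsets of $A_i \cap B$ of any prescribed measure at most $\mu(A_i \cap B)$. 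Once this flexibility is established, everything else is a routine bookkeeping application of Corollaries \ref{COR MAKEPART} and \ref{COR PERMUTE}.
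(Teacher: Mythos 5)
Your overall plan is the same as the paper's: apply Lemma \ref{LEM RATCOMB}, cut $B$ into pieces $B^{(j)}$ with prescribed measures and prescribed intersections with each $A_i$, then apply Corollary \ref{COR MAKEPART} and Corollary \ref{COR PERMUTE} to each $B^{(j)}$ and glue. The bookkeeping after the partition $\{B^{(j)}\}$ is in hand matches the paper exactly. But there is a genuine gap precisely at the step you yourself flag as ``the main obstacle,'' and the way you propose to close it does not work.

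You claim that ergodicity forces $\salg_G(\alpha \cup \{B\})$ to be ``essentially non-atomic,'' and that Lemma \ref{LEM SIMPLEMIX} then lets you cut $A_i \cap B$ into $\salg_G(\alpha \cup \{B\})$-measurable pieces of arbitrary prescribed (irrational) measure. Both halves of this are wrong. First, ergodicity does \emph{not} make the sub-$\sigma$-algebra $\salg_G(\alpha \cup \{B\})$ non-atomic: if $\alpha$ is trivial and $B = X$, or more generally whenever the factor generated by $\alpha \cup \{B\}$ is finite, the $\sigma$-algebra is atomic. Second, even when that factor is non-atomic, Lemma \ref{LEM SIMPLEMIX} only transports a \emph{given} measurable set into another one of at least equal measure; it offers no mechanism for producing a $\salg_G(\alpha \cup \{B\})$-measurable subset of $A_i \cap B$ of a prescribed irrational measure out of thin air. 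No amount of iterating it creates new fine measurable structure in a fixed sub-$\sigma$-algebra.

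The paper resolves this cleanly and without any ``mass-transport'' argument in $X$: pass to the factor $\pi : (X, \mu) \to (Y, \nu)$ associated to $\salg_G(\alpha \cup \{B\})$. Every Borel subset of $Y$ pulls back to a $\salg_G(\alpha \cup \{B\})$-measurable subset of $X$ by definition of the associated factor, so the measurability constraint evaporates downstairs. By ergodicity, $(Y, \nu)$ is either non-atomic or finite. In the non-atomic case, one can split any Borel subset of $Y$ into pieces of any prescribed measures by elementary standard-Borel-space theory, so the desired partition $\{Z_j'\}$ (your $\{B^{(j)}\}$) exists trivially, irrational $c_j$ notwithstanding. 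In the finite case the proportions $\mu(A_i \cap B)/\mu(B)$ are automatically rational with denominator $|B'|$, so Lemma \ref{LEM RATCOMB} is unnecessary and one proceeds directly with a single $\rv$, again with no need to fabricate irrational-measure pieces. Your proposal needs both of these observations; as written it asserts a false dichotomy (always non-atomic) and appeals to a tool (\ref{LEM SIMPLEMIX}) that cannot do the job.
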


\begin{proof}
Let $\pi : (X, \mu) \rightarrow (Y, \nu)$ be the factor map associated to the $G$-invariant sub-$\sigma$-algebra generated by $\alpha \cup \{B\}$. Enumerate $\alpha$ as $\alpha = \{A_1, A_2, \ldots, A_p\}$. Set $B' = \pi(B)$ and $\alpha' = \{A_i' \: 1 \leq i \leq p\}$ where $A_i' = \pi(A_i)$. Note that $\alpha'$ is a partition of $(Y, \nu)$ and that $\nu(A_i' \cap B') = \mu(A_i \cap B)$.

First, let's suppose that $(Y, \nu)$ is non-atomic. Pick $n \in \N$ satisfying $p / n < \epsilon$, and for $1 \leq i \leq p$ set
$$r_i = \lfloor n \nu(A_i' \cap B') / \nu(B') \rfloor.$$
Since $(Y, \nu)$ is non-atomic, we can find a partition $\xi'$ of $B'$ into $n$ pieces each of measure $\frac{1}{n} \cdot \nu(B')$ such that for every $i$, $A_i' \cap B'$ contains at least $r_i$ many classes of $\xi'$. Then at most $p$ many classes of $\xi'$ are not contained in any $A_i' \cap B'$. Set $\xi = \pi^{-1}(\xi')$. Then the classes of $\xi$ lie in $\salg_G(\alpha \cup \{B\})$. Apply Corollary \ref{COR PERMUTE} to get a $\salg_G(\alpha \cup \{B\})$-expressible function $\theta \in [[E_G^X]]$ which cyclically permutes the classes of $\xi$. Set $E = E_\theta$. Then for $\mu$-almost-every $x \in B$, the $E$-class of $x$ has cardinality $n$ and
$$\forall i \qquad - \epsilon \leq - \frac{1}{n} < \frac{|A_i \cap [x]_E|}{|[x]_E|} - \frac{\mu(A_i \cap B)}{\mu(B)} \leq \frac{p}{n} < \epsilon.$$

In the case that $(Y, \nu)$ has an atom, we deduce by ergodicity that, modulo a null set, $Y$ is finite. Say $|Y| = m$ and each point in $Y$ has measure $\frac{1}{m}$. Set $n = |B'|$. Clearly there are integers $k_i \in \N$, with $\sum_{i = 1}^p k_i = n$ and
$$\frac{\mu(A_i \cap B)}{\mu(B)} = \frac{\nu(A_i' \cap B')}{\nu(B')} = \frac{k_i / m}{n / m} = \frac{k_i}{n}.$$
Let $\xi'$ be the partition of $B'$ into points, and pull back $\xi'$ to a partition $\xi$ of $B$. Now apply Corollary \ref{COR PERMUTE} and follow the argument from the non-atomic case.
\end{proof}

\begin{cor} \label{COR AVGFUNCMIX}
Let $G \acts (X, \mu)$ be an ergodic {\pmp} action, let $B \subseteq X$ have positive measure, let $\epsilon > 0$, and let $F = \{ f : B \rightarrow \R \}$ be a finite collection of finite valued Borel functions. Then there is an equivalence relation $E$ on $B$ with $E \subseteq E_G^X \cap B \times B$ and $n \in \N$ so that for $\mu$-almost-every $x \in B$, the $E$-class of $x$ has cardinality $n$ and
$$\forall f \in F \qquad \frac{1}{\mu(B)} \cdot \int_B f \ d \mu - \epsilon < \frac{1}{|[x]_E|} \cdot \sum_{y \in [x]_E} f(y) < \frac{1}{\mu(B)} \cdot \int_B f \ d \mu + \epsilon.$$
Moreover, if each $f \in F$ is $\cF$-measurable then $E$ admits a $\salg_G(\cF \cup \{B\})$-measurable transversal and is generated by a $\salg_G(\cF \cup \{B\})$-expressible function $\theta : B \rightarrow B$ in $[[E_G^X]]$ which satisfies $\theta^n = \id_B$.
\end{cor}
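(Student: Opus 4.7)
The plan is to reduce to Lemma \ref{LEM AVGMIX} by replacing the finite collection $F$ of finite-valued functions with a single finite partition of $X$ on whose cells every $f \in F$ is constant.

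First, for each $f \in F$ the collection of (Borel) level sets $\{f^{-1}(v) \: v \in \rng(f)\}$ is a finite Borel partition of $B$. Let $\beta$ be the common refinement over all $f \in F$; then $\beta$ is a finite Borel partition of $B$, and every $f \in F$ is constant on each cell $A \in \beta$, say with value $v_A^f$. Set $\alpha = \beta \cup \{X \setminus B\}$, a finite Borel partition of $X$. Let $M = 1 + \max\{ |v_A^f| \: A \in \beta,\ f \in F\}$ (finite by hypothesis) and choose $\epsilon' > 0$ with $M \cdot |\beta| \cdot \epsilon' < \epsilon$.

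Apply Lemma \ref{LEM AVGMIX} to $B$, $\alpha$, and $\epsilon'$ to obtain an equivalence relation $E$ on $B$ and $n \in \N$ such that for $\mu$-almost every $x \in B$, $|[x]_E| = n$ and for every $A \in \alpha$,
$$\left| \frac{\mu(A \cap B)}{\mu(B)} - \frac{|A \cap [x]_E|}{|[x]_E|} \right| < \epsilon'.$$
Moreover, $E$ admits a $\salg_G(\alpha \cup \{B\})$-measurable transversal and is generated by a $\salg_G(\alpha \cup \{B\})$-expressible function $\theta \in [[E_G^X]]$ with $\theta^n = \id_B$. For $A \in \beta$ (so $A \subseteq B$) we have $\mu(A \cap B) = \mu(A)$, and for every $f \in F$ and almost-every $x \in B$,
$$\left| \frac{1}{\mu(B)} \int_B f\, d\mu - \frac{1}{|[x]_E|} \sum_{y \in [x]_E} f(y) \right| = \left| \sum_{A \in \beta} v_A^f \left( \frac{\mu(A)}{\mu(B)} - \frac{|A \cap [x]_E|}{|[x]_E|} \right) \right| \leq M \cdot |\beta| \cdot \epsilon' < \epsilon.$$

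For the ``moreover'' clause, suppose each $f \in F$ is $\cF$-measurable. Then every level set of every $f$ lies in $\cF$, so $\beta \subseteq \cF$. Consequently $\alpha \subseteq \cF \cup \{X \setminus B\} \subseteq \salg_G(\cF \cup \{B\})$, which yields $\salg_G(\alpha \cup \{B\}) \subseteq \salg_G(\cF \cup \{B\})$. Therefore the transversal and the generating function $\theta$ produced by Lemma \ref{LEM AVGMIX} are automatically $\salg_G(\cF \cup \{B\})$-measurable and $\salg_G(\cF \cup \{B\})$-expressible, respectively.

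The argument is essentially routine once the reduction to a single partition is made; the only point requiring any care is matching the constants so that the strict inequality in the conclusion is preserved, which is handled by the choice of $\epsilon'$.
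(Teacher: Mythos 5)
Your proof is correct and follows the same approach as the paper: pass from the functions to the finite partition of $B$ into common level sets and apply Lemma~\ref{LEM AVGMIX}. The paper's own proof is terser (two sentences), leaving implicit both the extension of the level-set partition of $B$ to a partition of $X$ and the need to shrink $\epsilon$ to $\epsilon'$ so that the telescoping sum over cells comes out below $\epsilon$; you spell both out correctly.
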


\begin{proof}
Define a partition $\alpha$ of $B$ so that $x, y \in B$ lie in the same piece of $\alpha$ if and only if $f(x) = f(y)$ for all $f \in F$. Then $\alpha$ is a finite partition. Now the desired equivalence relation $E$ is obtained from Lemma \ref{LEM AVGMIX}.
\end{proof}

The conclusions of the previous lemma and corollary are not too surprising since you are allowed to ``see'' the sets which you wish to mix, i.e. you are allowed to use $\salg_G(\alpha \cup \{B\})$. The following proposition however is unexpected. It roughly says that you can achieve the same conclusion even if you are restricted to only seeing a very small sub-$\sigma$-algebra. We will use the proposition below in the same fashion one typically uses the Rokhlin lemma and the Shannon--McMillan--Breiman theorem, although technically the proposition below bears more similarity with the Rokhlin lemma and the ergodic theorem.

Let us say a few words on the Rokhlin lemma to highlight the similarity. For a free {\pmp} action $\Z \acts (X, \mu)$, $n \in \N$, and $\epsilon > 0$, the Rokhlin lemma provides a Borel set $S \subseteq X$ such that the sets $i \cdot S$, $0 \leq i \leq n - 1$, are pairwise disjoint and union to a set having measure at least $1 - \epsilon$. The set $S$ naturally produces a subequivalence relation $E$ defined as follows. For $x \in X$ set $x_S = (-i) \cdot x$ where $(-i) \cdot x \in S$ and $(-j) \cdot x \not\in S$ for all $0 \leq j < i$. We set $x \ E \ y$ if and only if $x_S = y_S$. Clearly every $E$ class has cardinality at least $n$, and a large measure of $E$-classes have cardinality precisely $n$. A key fact which is frequently used in classical results such as Krieger's theorem is that the equivalence relation $E$ is easily described. Specifically, $S$ is small since $\mu(S) \leq 1 / n$, and so $E$ can be defined by using the small sub-$\sigma$-algebra $\salg_\Z(\{S\})$.

\begin{prop} \label{PROP RLEM}
Let $G \acts (X, \mu)$ be an ergodic {\pmp} action with $(X, \mu)$ non-atomic, let $\alpha$ be a finite collection of Borel subsets of $X$, let $\epsilon > 0$, and let $N \in \N$. Then there are $n \geq N$, Borel sets $S_1, S_2 \subseteq X$ with $\mu(S_1) + \mu(S_2) < \epsilon$, and a $\salg_G(\{S_1, S_2\})$-expressible $\theta \in [E_G^X]$ such that $E_\theta$ admits a $\salg_G(\{S_1, S_2\})$-measurable transversal, and for almost-every $x \in X$ we have $|[x]_{E_\theta}| = n$ and
$$\forall A \in \alpha \qquad \mu(A) - \epsilon < \frac{|A \cap [x]_{E_\theta}|}{|[x]_{E_\theta}|} < \mu(A) + \epsilon.$$
\end{prop}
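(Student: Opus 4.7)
Plan. I would apply Lemma \ref{LEM AVGMIX} to $X$ with the partition $\alpha$ and a tight parameter $\epsilon' \ll \epsilon$, obtaining a cyclic $\phi \in [E_G^X]$ of order $n$ (with $n$ chosen large enough so that $n \geq N$ and $1/n < \epsilon/2$), a $\salg_G(\alpha)$-measurable transversal $T$ of measure $1/n$, and $E_\phi$-classes of size $n$ that $\epsilon'$-mix $\alpha$. The function $\phi$ and transversal $T$ would serve directly as $\theta$ and $S_1$ (with $\mu(S_1) = 1/n < \epsilon/2$), apart from one problem: $\phi$ is only $\salg_G(\alpha)$-expressible, not $\salg_G(\{S_1, S_2\})$-expressible for a small $S_2$.

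The main task is therefore to reduce the $\sigma$-algebra required for expressibility. I would construct a second small set $S_2 \subseteq X$ (with $\mu(S_2) < \epsilon/2$) such that $\salg_G(\{S_1, S_2\})$ is rich enough to contain all the sets $Z_g^\phi$ arising in the decomposition of $\phi$ as a piecewise $G$-action. Crucially, since $\salg_G(\{S_1, S_2\})$ is $G$-invariant and hence generated by the translates $\{g S_1, g S_2 \: g \in G\}$, it can be surprisingly large even when both $S_1$ and $S_2$ have small measure. By tailoring the construction inside Lemma \ref{LEM AVGMIX} so that $\phi$ uses only finitely many group elements, and so that the partition pieces $Z_g^\phi$ are built from $G$-translates of a single small auxiliary set, one can arrange for $S_2$ to carry precisely the coding information required.

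The main obstacle is this $\sigma$-algebra reduction step. As emphasized in the outline of the introduction, this is the only place where the original group action (rather than the orbit equivalence relation alone) is used critically; the trick is to exploit the $G$-action on the small sets $S_1, S_2$ to produce enough measurable structure to encode $\phi$'s defining partition, without inflating the total measure $\mu(S_1) + \mu(S_2)$ past $\epsilon$. Once the expressibility is arranged, the mixing conclusion $\mu(A) - \epsilon < |A \cap [x]_{E_\theta}| / |[x]_{E_\theta}| < \mu(A) + \epsilon$ follows from the $\epsilon'$-mixing guaranteed by Lemma \ref{LEM AVGMIX} (provided $\epsilon'$ was chosen small enough to absorb any perturbation introduced in the reduction step), and the required $\salg_G(\{S_1, S_2\})$-measurable transversal can simply be taken to be $S_1$ itself.
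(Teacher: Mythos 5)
Your proposal correctly identifies the starting point (Lemma \ref{LEM AVGMIX}) and the key difficulty (reducing the $\sigma$-algebra of expressibility from $\salg_G(\alpha)$ to $\salg_G(\{S_1, S_2\})$ with $S_1, S_2$ small), but the reduction step as you describe it cannot work, and the paper in fact takes a structurally different and more delicate route around it.

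The issue is that if you apply Lemma \ref{LEM AVGMIX} directly to $X$ with the partition $\alpha$, the defining sets $Z_g^\phi$ of the resulting $\phi$ are built from the pieces $\xi_j = \pi^{-1}(\xi_j')$, which are genuinely $\salg_G(\alpha)$-measurable subsets of $X$. There is no a priori bound on the ``information content'' of these sets, so there is no reason a small $S_2$ should be able to code them: you would be trying to encode an essentially arbitrary finite $\salg_G(\alpha)$-measurable partition of $X$ into the translates of a set of small measure, and that is impossible in general. Your suggestion that one could ``tailor the construction so that the pieces $Z_g^\phi$ are built from $G$-translates of a single small auxiliary set'' is exactly the content that needs a proof; as stated it is circular, since the pieces depend on $\alpha$ and the whole point is to avoid seeing $\alpha$.

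What the paper actually does is a two-level construction that caps the information content before any coding happens. It first builds a Kakutani-type tower $E_h$ of height $m$ over a base $S_1$ of measure $1/m$, and---crucially---this tower is $\salg_G(\{S_1\})$-expressible all by itself (Corollaries \ref{COR MAKEPART} and \ref{COR PERMUTE}). Then, on the base $S_1$, it considers the density functions $d_A(s) = |A \cap [s]_{E_h}|/m$ for $A\in\alpha$; each $d_A$ takes at most $m+1$ values, so the joint function $d_\alpha$ has range of size at most $(m+1)^{|\alpha|}$. Lemma \ref{LEM AVGMIX} (via Corollary \ref{COR AVGFUNCMIX}) is applied only to mix these finitely-valued functions on $S_1$, producing a vertical mixing $E_v$ that is $\salg_G(\cF\cup\{S_1\})$-expressible where $\cF = \salg_G(\{d_A : A\in\alpha\})$ has bounded combinatorial complexity. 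It is precisely this bound $(m+1)^{|\alpha|} < 2^{\lceil \epsilon m / 4\rceil}$, combined with the $h$-tower structure, that allows one to code $d_\alpha$ into a set $S_2 \subseteq \bigcup_{1\le i\le\ell} h^i(S_1)$ of measure $\le \ell/m < \epsilon/2$. Your outline is missing both the two-tower architecture and the observation that one should mix density functions rather than $\alpha$ itself; without those, the $\sigma$-algebra reduction does not go through.
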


\begin{proof}
Pick $m > \max(4 / \epsilon, \ N)$ with $m \in \N$ and
$$|\alpha| \cdot \log_2(m + 1) < \frac{\epsilon}{4} \cdot m.$$
Let $S_1 \subseteq X$ be any Borel set with $\mu(S_1) = \frac{1}{m} < \frac{\epsilon}{4}$. Apply Corollaries \ref{COR MAKEPART} and \ref{COR PERMUTE} to obtain a $\salg_G(\{S_1\})$-expressible function $h \in [E_G^X]$ such that $\dom(h) = \rng(h) = X$, $h^m = \id_X$, and such that $\{h^i(S_1) \: 0 \leq i < m\}$ is a partition of $X$. The induced Borel equivalence relation $E_h$ is finite, in fact almost-every $E_h$-class has cardinality $m$, and it has $S_1$ as a transversal. We imagine the classes of $E_h$ as extending horizontally to the right, and we visualize $S_1$ as a vertical column.

We consider the distribution of $\alpha \res [s]_{E_h}$ for each $s \in S_1$. For $A \in \alpha$ define $d_A : S_1 \rightarrow \R$ by
$$d_A(s) = \frac{|A \cap [s]_{E_h}|}{|[s]_{E_h}|} = \frac{1}{m} \cdot \Big| A \cap [s]_{E_h} \Big|.$$
Note that for each $A \in \alpha$
$$\int_{S_1} d_A \ d \mu = \frac{1}{m} \cdot \mu(A) = \mu(S_1) \cdot \mu(A).$$
By Corollary \ref{COR AVGFUNCMIX} there is $k \in \N$ and an equivalence relation $E_v \subseteq E_G^X \cap S_1 \times S_1$ on $S_1$ such that for almost every $s \in S_1$, the $E_v$-class of $s$ has cardinality $k$ and
$$\forall A \in \alpha \qquad \mu(A) - \epsilon < \frac{1}{|[s]_{E_v}|} \cdot \sum_{s' \in [s]_{E_v}} d_A(s') < \mu(A) + \epsilon.$$
Moreover, if we let $\cF$ denote the $G$-invariant sub-$\sigma$-algebra generated by the functions $d_A$, $A \in \alpha$, then $E_v$ admits a $\salg_G(\cF \cup \{S_1\})$-measurable transversal $T$ and is generated by a $\salg_G(\cF \cup \{S_1\})$-expressible function $v \in [[E_G^X]]$ which satisfies $\dom(v) = \rng(v) = S_1$ and $v^k = \id_{S_1}$.

Let $E = E_v \vee E_h$ be the equivalence relation generated by $E_v$ and $E_h$. Then $T \subseteq S_1$ is a transversal for $E$, and for every $s \in T$
$$\Big| [s]_E \Big| = \sum_{s' \in [s]_{E_v}} \Big| [s']_{E_h} \Big| = k \cdot m.$$
Setting $n = k \cdot m \geq N$, we have that almost every $E$-class has cardinality $n$. Also, for every $A \in \alpha$ and $s \in T$ we have
$$\frac{|A \cap [s]_E|}{|[s]_E|} = \frac{1}{k \cdot m} \cdot \sum_{s' \in [s]_{E_v}} \Big| A \cap [s']_{E_h} \Big| = \frac{1}{|[s]_{E_v}|} \cdot \sum_{s' \in [s]_{E_v}} d_A(s').$$
It follows that for $\mu$-almost-every $x \in X$
$$\forall A \in \alpha \qquad \mu(A) - \epsilon < \frac{|A \cap [x]_E|}{|[x]_E|} < \mu(A) + \epsilon.$$

Now consider the partition $\xi = \{T_{i,j} \: 0 \leq i < k, \ 0 \leq j < m\}$ of $X$ where
$$T_{i,j} = h^j \circ v^i (T).$$
Note that $T_{i,j} \in \salg_G(\cF \cup \{S_1\})$ by Lemmas \ref{LEM EXPMOVE} and \ref{LEM EXPGROUP}. We will define a function $\theta \in [E_G^X]$ which generates $E$ by defining $\theta$ on each piece of $\xi$. We define
$$\theta \res T_{i,j} = \begin{cases}
h \res T_{i,j} & \text{if } j + 1 < m \\
v \circ h \res T_{i,j} & \text{if } j + 1 = m.
\end{cases}$$
In regard to the second case above, one should observe that $h(T_{i,m-1}) = T_{i,0}$ since $h^m = \id_X$. Since $v$ satisfies $v^k = \id_{S_1}$ and $n = k \cdot m$, we see that $\theta$ satisfies $\theta^n = \id_X$. We also have $E = E_\theta$. Finally, $\theta$ is $\salg_G(\cF \cup \{S_1\})$-expressible since each restriction $\theta \res T_{i, j}$ is $\salg_G(\cF \cup \{S_1\})$-expressible by Lemma \ref{LEM EXPGROUP}.

To complete the proof, we must find a Borel set $S_2 \subseteq X$ with $\mu(S_2) < \frac{3}{4} \cdot \epsilon < \epsilon - \mu(S_1)$ such that $\cF \subseteq \salg_G(\{S_1, S_2\})$. Notice that $|\rng(d_A)| \leq m + 1$ for every $A \in \alpha$ and therefore the product map
$$d_\alpha = \prod_{A \in \alpha} d_A : S_1 \rightarrow \Big\{ 0, \frac{1}{m}, \frac{2}{m}, \ldots, 1 \Big\}^\alpha$$
has an image of cardinality at most $(m + 1)^{|\alpha|}$. Set $\ell = \lceil (\epsilon / 4) \cdot m \rceil$ (i.e. the least integer greater than or equal to $(\epsilon / 4) \cdot m$). Since $(\epsilon / 4) \cdot m  > 1$ we have that $\ell < (\epsilon / 2) \cdot m$. By our choice of $m$ we have
$$(m + 1)^{|\alpha|} < 2^{(\epsilon / 4) \cdot m} \leq 2^\ell.$$
Therefore there is an injection
$$r : \{0, 1/m, \ldots, 1\}^\alpha \rightarrow \{0, 1\}^\ell.$$
Now we will define $S_2$ so that, for every $s \in S_1$, the integers $\{1 \leq i \leq \ell \: h^i(s) \in S_2\}$ will encode the value $r \circ d_\alpha(s)$. Specifically, we define
$$S_2 = \{h^i(s) \: 1 \leq i \leq \ell, \ s \in S_1, \ r(d_\alpha(s))(i) = 1\}.$$
We have that $S_2 \subseteq \bigcup_{1 \leq i \leq \ell} h^i(S_1)$ and therefore
$$\mu(S_2) \leq \ell \cdot \mu(S_1) < \left( \frac{\epsilon}{2} \cdot m \right) \cdot \frac{1}{m} = \frac{\epsilon}{2}$$
as required. Finally, we check that $\cF \subseteq \salg_G(\{S_1, S_2\})$. Fix $p \in \{0, 1/m, \ldots, 1\}^\alpha$. Set
$$I_p^0 = \{1 \leq i \leq \ell \: r(p)(i) = 0\} \quad \text{and} \quad I_p^1 = \{1 \leq i \leq \ell \: r(p)(i) = 1\}.$$
Then for $s \in S_1$ we have
\begin{align*}
d_\alpha(s) = p & \Longleftrightarrow r(d_\alpha (s)) = r(p) \\
 & \Longleftrightarrow (\forall i \in I_p^0) \ \ h^i(s) \not\in S_2 \quad \text{and} \quad (\forall i \in I_p^1) \ \ h^i(s) \in S_2 \\
 & \Longleftrightarrow s \in S_1 \cap \left( \bigcap_{i \in I_p^0} h^{-i}(X \setminus S_2) \right) \cap \left( \bigcap_{i \in I_p^1} h^{-i}(S_2) \right).
\end{align*}
So $d_\alpha^{-1}(p) \in \salg_G(\{S_1, S_2\})$ by Lemmas \ref{LEM EXPMOVE} and \ref{LEM EXPGROUP}. Thus $\cF \subseteq \salg_G(\{S_1, S_2\})$.
\end{proof}

\section{Distributions on finite sets}

In this section we present a few counting lemmas from information theory which we will need. These facts are well known and were used in classical proofs of Krieger's finite generator theorem. At the end of this section we will briefly sketch why replacing the Rokhlin lemma and the Shannon--McMillan--Breiman theorem in the classical proof of Krieger's theorem with Proposition \ref{PROP RLEM} does not (yet) result in a proof of our main theorem. This will illustrate what new techniques are required and will motivate the technical constructions in the next section.

For a finite probability vector $\pv$, $n \in \N$, and $\epsilon \geq 0$, we let $L_{\pv,\epsilon}^n$ be the set of functions $\ell : \{0, \ldots, n-1\} \rightarrow \{0, \ldots, |\pv|-1\}$ which approximate the distribution of $\pv$ in the sense that
$$\forall 0 \leq t < |\pv| \qquad \left| \frac{|\ell^{-1}(t)|}{n} - p_t \right| \leq \epsilon.$$
Similarly, if $(X, \mu)$ is a probability space and $\xi$ is a finite partition of $X$, then we let $L_{\xi, \epsilon}^n$ be the set of functions $\ell : \{0, 1, \ldots, n - 1\} \rightarrow \xi$ such that
$$\forall C \in \xi \qquad \left| \frac{|\ell^{-1}(C)|}{n} - \mu(C) \right| \leq \epsilon.$$
If $\xi$ is a finite partition of $(X, \mu)$ and $\theta : (X, \mu) \rightarrow (X, \mu)$ is a measure-preserving bijection with every $\theta$-orbit having cardinality $n$, then we associate to each $x \in X$ its \emph{$(\xi, \theta)$-name} $\nm_\xi^\theta(x) \in L_{\xi, \infty}^n$ defined by setting $\nm_\xi^\theta(x)(i) = C$ if $\theta^i(x) \in C \in \xi$.

If $\xi$ and $\zeta$ are finite partitions of $(X, \mu)$ and $\xi$ is finer than $\zeta$, then we define the \emph{coarsening map} $\pi_\zeta : \xi \rightarrow \zeta$ to be the unique map satisfying $C \subseteq \pi_\zeta(C)$ for all $C \in \xi$. By applying $\pi_\zeta$ coordinate-wise, we obtain a map $\pi_\zeta : L_{\xi, \infty}^n \rightarrow L_{\zeta, \infty}^n$.

\begin{lem} \label{LEM RELSTIRLING}
Let $(X, \mu)$ be a probability space and let $\xi$ and $\zeta$ be finite partitions of $X$. Suppose that $\xi$ refines $\zeta$ and let $\pi_\zeta : \xi \rightarrow \zeta$ be the coarsening map. Then for every $\kappa > 0$ there is $\epsilon_0 > 0$ so that for all $0 < \epsilon < \epsilon_0$, all sufficiently large $n$, and every $z \in L_{\zeta, \epsilon}^n$
$$\exp \Big( n \cdot \sH(\xi | \zeta) - n \cdot \kappa \Big) \leq \Big| \Big\{ c \in L_{\xi, \epsilon}^n \: \pi_\zeta(c) = z \Big\} \Big| \leq \exp \Big( n \cdot \sH(\xi | \zeta) + n \cdot \kappa \Big).$$
\end{lem}

\begin{proof}
This is a well known fact from information theory which can be quickly deduced from Stirling's formula. See \cite[Lemma 2.13]{CsKo}.
\end{proof}

By taking $\zeta$ to be the trivial partition in the previous lemma, we obtain the following.

\begin{cor} \label{COR STIRLING}
Let $\pv$ be a finite probability vector. Then for every $\kappa > 0$ there is $\epsilon_0 > 0$ so that for all $0 < \epsilon < \epsilon_0$ and all sufficiently large $n$
$$\exp \Big( n \cdot \sH(\pv) - n \cdot \kappa \Big) \leq \Big| L_{\pv, \epsilon}^n \Big| \leq \exp \Big( n \cdot \sH(\pv) + n \cdot \kappa \Big).$$
\end{cor}

For $x \in \R$ we write $\lfloor x \rfloor$ and $\lceil x \rceil$ for the greatest integer less than or equal to $x$ and the least integer greater than or equal to $x$, respectively.

\begin{cor} \label{COR CHOOSE}
Fix $0 < \delta < 1$. Then for every $\kappa > 0$ and for all sufficiently large $n$ we have
$$\binom{n}{\lfloor \delta \cdot n \rfloor} \leq \exp \Big( n \cdot \sH(\delta, 1 - \delta) + n \cdot \kappa\Big).$$
\end{cor}

\begin{proof}
Set $\pv = (1 - \delta, \delta)$. By definition $\binom{n}{\lfloor \delta \cdot n \rfloor}$ is the number of subsets of $\{0, \ldots, n-1\}$ having cardinality $\lfloor \delta \cdot n \rfloor$. Such subsets naturally correspond, via their characteristic functions, to elements of $L_{\pv, \epsilon}^n$ when $n > 1 / \epsilon$. Thus when $n > 1 / \epsilon$ we have $\binom{n}{\lfloor \delta \cdot n \rfloor} \leq |L_{\pv, \epsilon}^n|$. Now apply Corollary \ref{COR STIRLING}.
\end{proof}

The normalized Hamming metric $\dHam$ on the set $L_{\pv, \infty}^n$ is defined by
$$\dHam(\ell, \ell') = \frac{1}{n} \cdot | \{i \: \ell(i) \neq \ell'(i)\} |.$$

\begin{cor} \label{COR SEP}
Let $\pv$ be a finite probability vector. Then for every $\kappa > 0$ there are $\delta, \epsilon_0 > 0$ so that for all $0 < \epsilon < \epsilon_0$ and all sufficiently large $n$ there exists $K \subseteq L_{\pv, \epsilon}^n$ satisfying $\dHam(k, k') > 2 \delta$ for all $k \neq k' \in K$ and $|K| \geq \exp(n \cdot \sH(\pv) - n \cdot \kappa)$.
\end{cor}

\begin{proof}
Fix $\delta, \epsilon_0 > 0$ so that
$$\sH(2 \delta, 1 - 2 \delta) + 2 \delta \cdot \log |\pv| < \kappa / 3$$
and $|L_{\pv, \epsilon}^n| \geq \exp(n \cdot \sH(\pv) - n \cdot \kappa / 3)$ for all $0 < \epsilon < \epsilon_0$ and for sufficiently large $n$. For $V \subseteq L_{\pv, \infty}^n$ let
$$B(V; \rho) = \{\ell \in L_{\pv, \infty}^n \: \exists v \in V \ \ \dHam(\ell, v) \leq \rho\}$$
be the ball about $V$ of radius $\rho$. Basic combinatorics implies that for $0 < \rho < 1$
$$\Big| B(V; \rho) \Big| \leq |V| \cdot \binom{n}{\lfloor \rho \cdot n \rfloor} \cdot |\pv|^{\rho \cdot n}.$$
Fix $0 < \epsilon < \epsilon_0$. For each $n$ let $K_n \subseteq L_{\pv, \epsilon}^n$ be maximal with the property that $\dbar(k, k') > 2 \delta$ for all $k \neq k' \in K_n$. Then by maximality of $K_n$ we have $L_{\pv, \epsilon}^n \subseteq B(K_n; 2 \delta)$ and thus
$$| L_{\pv, \epsilon}^n | \leq | B(K_n; 2 \delta) | \leq |K_n| \cdot \binom{n}{\lfloor 2 \delta \cdot n \rfloor} \cdot |\pv|^{2 \delta \cdot n}.$$
Solving for $|K_n|$ and letting $n$ be sufficiently large gives
\begin{align*}
|K_n| & \geq | L_{\pv, \epsilon}^n | \cdot \binom{n}{\lfloor 2 \delta \cdot n \rfloor}^{-1} \cdot |\pv|^{- 2 \delta \cdot n}\\
 & \geq \exp(n \cdot \sH(\pv) - n \cdot \kappa / 3 - n \cdot \sH(2 \delta, 1 - 2 \delta) - n \cdot \kappa / 3 - n \cdot 2 \delta \cdot \log |\pv|)\\
 & \geq \exp(n \cdot \sH(\pv) - n \kappa).\qedhere
\end{align*}
\end{proof}

We present one more technical lemma we will need.

\begin{lem} \label{LEM J}
Let $\pv$ be a finite probability vector, let $\epsilon, \delta > 0$, and let $n \in \N$. If $\ell \in L_{\pv, \epsilon}^n$ then there is $J \subseteq \{0, \ldots, n-1\}$ such that $|J| \leq (\epsilon n + 1) \cdot |\pv| + \delta n$ and
$$\forall 0 \leq t < |\pv| \qquad \frac{1}{n} \cdot \Big| \{ i : \ell(i) = t\} \setminus J \Big| \leq (1 - \delta) p_t.$$
\end{lem}

\begin{proof}
For each $t$ we have $|\{i : \ell(i) = t\}| \leq n p_t + n \epsilon$. So we may choose $J$ so that for every $t$
$$|J \cap \{i : \ell(i) = t\}| = \max(0, \ \lceil |\{i : \ell(i) = t\}| - (1 - \delta) n p_t \rceil) \leq \lceil \epsilon n + \delta n p_t\rceil.$$
Then $J$ will have the desired property and
\begin{equation*}
|J| \leq \sum_{t = 0}^{|\pv| - 1} \lceil \epsilon n + \delta n p_t \rceil \leq \epsilon n \cdot |\pv| + \delta n + |\pv| = (\epsilon n + 1) |\pv| + \delta n.\qedhere
\end{equation*}
\end{proof}

Before closing this section we briefly clarify to the reader what new methods are required in order to prove the main theorem. Let us consider the simplest setting where partitions and probability vectors are finite, $\cF = \{X, \varnothing\}$ is trivial, and $r = 1$. The argument we present below is simply a sketch intended to give some intuition and motivation.

Let $G \acts (X, \mu)$ be a {\pmp} ergodic action with $\mu$ non-atomic, let $\xi$ be a finite generating partition, and let $\pv$ be a finite probability vector with $\sH(\xi) < \sH(\pv)$. We would like to construct a generating partition $\alpha = \{A_i : 0 \leq i < |\pv|\}$ with $\mu(A_i) = p_i$ for all $i$. Pick $n_0 \in \N$ and $\epsilon > 0$. By Proposition \ref{PROP RLEM} there are $n \geq n_0$, Borel sets $S_1, S_2 \subseteq X$ with $\mu(S_1) + \mu(S_2) < \epsilon$, and a $\salg_G(\{S_1, S_2\})$-expressible $\theta \in [E_G^X]$ such that $E_\theta$ admits a $\salg_G(\{S_1, S_2\})$-measurable transversal $Y$ and such that for $\mu$-almost-every $x \in X$, the $E_\theta$ class of $x$ has cardinality $n$, and
$$\forall C \in \xi  \qquad \mu(C) - \epsilon < \frac{|C \cap [x]_{E_\theta}|}{|[x]_{E_\theta}|} < \mu(C) + \epsilon.$$
So we have $\nm_\xi^\theta(y) \in L_{\xi, \epsilon}^n$ for almost-every $y \in Y$.

Since $|L_{\xi, \epsilon}^n| \approx \exp(n \cdot \sH(\xi)) < \exp(n \cdot \sH(\pv)) \approx |L_{\pv,\epsilon}^n|$, by beginning with a sufficiently small $\epsilon$ and sufficiently large $n_0$, we can conclude from Corollary \ref{COR STIRLING} that
$$|L_{\xi,\epsilon}^n| < |L_{\pv,\epsilon}^n|.$$
So there is an injection $f : L_{\xi,\epsilon}^n \rightarrow L_{\pv, \epsilon}^n$. For $y \in Y$ set $c_y = \nm_\xi^\theta(y)$ and $a_y = f(c_y)$. Since $\{\theta^i(Y) : 0 \leq i < n\}$ is a partition of $X$, there is a unique partition $\alpha = \{A_i : 0 \leq i < |\pv|\}$ of $X$ satisfying $\nm_\alpha^\theta(y) = a_y$ for all $y \in Y$. Specifically, $x \in A_t$ if and only if $a_y(i) = t$ where $y \in Y$ and $0 \leq i < n$ satisfy $\theta^i(y) = x$. The condition $\nm_\alpha^\theta(y) = a_y \in L_{\pv, \epsilon}^n$ implies that $|\mu(A_i) - p_i| \leq \epsilon$ for all $i$. Since $f$ is an injection, $c_y = f^{-1}(a_y)$ is determined from $a_y$. It can be deduced from this fact that
\begin{equation} \label{EQN FAIL}
\xi \subseteq \salg_{\langle \theta \rangle}(\alpha \vee \{Y, X \setminus Y\}).
\end{equation}

Now it becomes clear what is missing in order to prove our main theorem. We do want $\mu(A_i) = p_i$ instead of $|\mu(A_i) - p_i| \leq \epsilon$, but this is only a minor problem. The major problem is that instead of (\ref{EQN FAIL}) we want $\xi \subseteq \salg_G(\alpha)$, so that $\alpha$ is a generating partition. For this, it would suffice to have both (\ref{EQN FAIL}) hold and $S_1, S_2 \in \salg_G(\alpha)$. In this case, $Y$ would be measurable and $\theta$ would be expressible with respect to $\salg_G(\{S_1, S_2\}) \subseteq \salg_G(\alpha)$ and therefore
$$\xi \subseteq \salg_{\langle \theta \rangle}(\alpha \vee \{Y, X \setminus Y\}) \subseteq \salg_G(\alpha).$$
So we want to have both (\ref{EQN FAIL}) hold and $S_1, S_2 \in \salg_G(\alpha)$ simultaneously. This first requirement on $\alpha$ uses $\theta$-translates and the second uses $G$-translates. The difficulty is that we must build an $\alpha$ which simultaneously encodes messages under the $\theta$-action and encodes messages under the $G$-action, and these two actions are almost completely unrelated. We solve this problem in the next section.

\section{Coding small sets} \label{SECT ACT}

The goal of this section is to construct, for any two sets $S_1, S_2$ having small measure, a pre-partition $\beta = \{B_0, B_1\}$ with the property that $\mu(\cup \beta)$ is small and $S_1, S_2 \in \ralg_G(\beta)$. This task is vital to the proof of the main theorem, as it will connect the $G$-action and the transformation $\theta$ used in Proposition \ref{PROP RLEM}.

We first focus our attention on building a pre-partition $\beta$ and a set $R$, $0 < \mu(R) < 1$, with $R \in \ralg_G(\beta)$. The pre-partition $\beta$ will consist of two (disjoint) sets $B_0, B_1$. On an intuitive level, it is likely helpful to imagine points in $B_0$ as ``labeled with $0$,'' points in $B_1$ as ``labeled with $1$'', and points in $X \setminus (B_0 \cup B_1)$ as ``unlabeled.'' The condition $R \in \ralg_G(\beta)$ then roughly means that no matter how the unlabeled points are later labeled, for every $x \in X$ it is possible to determine from the labeling of its orbit whether $x \in R$. This condition is similar to the notions of locally recognizable functions, membership tests, and recognizable sets appearing in \cite{GJS09,GJS12,ST14}. It is the similarity with recognizable sets which led us to use the letter $R$.

\begin{figure}[ht]
\begin{center}
\setlength{\unitlength}{4mm}
\begin{picture}(15,11)(0,0)

\put(7.5,0.75){\makebox(0,0)[b]{$F$}}
\qbezier(0,4.5)(0,11)(10.5,11)
\qbezier(0,4.5)(0,0)(7.5,0)
\qbezier(7.5,0)(16,0)(16,7.5)
\qbezier(10.5,11)(16,11)(16,7.5)

\put(0,0){
\put(4.5,3.25){\makebox(0,0)[b]{$W$}}
\qbezier(3,4.5)(3,3)(4.5,3)
\qbezier(3,4.5)(3,6)(4.5,6)
\qbezier(4.5,3)(6,3)(6,4.5)
\qbezier(4.5,6)(6,6)(6,4.5)
\put(4.5,4.35){\makebox(0,0)[b]{$\bullet$}\makebox(1.25,1.75)[b]{$1_G$}}

\put(4.5,7.5){\makebox(0,0)[b]{$\bullet$}\makebox(1.25,1.75)[b]{$c$}}
}

\put(-0.5,1){
\put(9,5.75){\makebox(0,0)[b]{$\bullet$}\makebox(1.25,1.75)[b]{$q_1$}}
\put(9,3.25){\makebox(0,0)[b]{$\bullet$}\makebox(1.25,1.75)[b]{$q_2$}}
\put(11.25,2){\makebox(0,0)[b]{$\bullet$}\makebox(1.25,1.75)[b]{$q_3$}}
\put(13.5,3.25){\makebox(0,0)[b]{$\bullet$}\makebox(1.25,1.75)[b]{$q_4$}}
\put(13.5,5.75){\makebox(0,0)[b]{$\bullet$}\makebox(1.25,1.75)[b]{$q_5$}}
\put(11.25,7){\makebox(0,0)[b]{$\bullet$}\makebox(1.25,1.75)[b]{$q_6$}}
}

\end{picture}
\caption{\label{fig:WF}The structure in $G$ which will be needed in constructing $R$ and $\beta = \{B_0, B_1\}$.}
\end{center}
\end{figure}
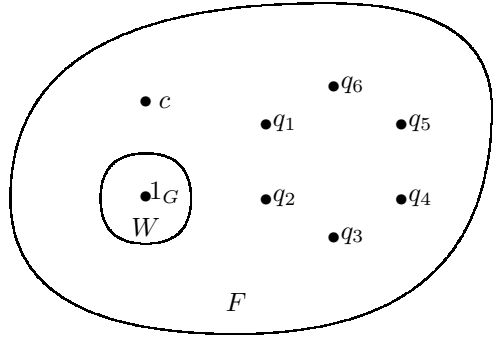

A naive but suggestive idea for building $R, B_0, B_1$ is to fix a finite window $W \subseteq G$ and a finite set $F$ containing $W^2$, and label all points in $W \cdot R$ with $1$ (i.e. set $B_1 = W \cdot R$), label all points in $(F \setminus W) \cdot R$ with $0$, and label more points $0$ as needed so that for every $x \not\in R$ there is a point in $W \cdot x$ labeled $0$. It seems plausible then that $x \in R$ if and only if $W \cdot x$ is labeled identically $1$. If so, $R \in \ralg_G(\beta)$ as desired. This naive approach is the right idea but does not quite work. For example, this may fail if $W$ has too much symmetry, such as if $W$ is a finite subgroup, for one might not be able to distinguish $R$ from $W \cdot R$. This problem can be easily fixed by wisely choosing a ``checkpoint'' $c \in F \setminus W$ and labeling $c \cdot R$ with $1$ in order to break any potential symmetries. In the case of free actions nothing more is required, but for non-free actions additional problems emerge which are a bit tedious to handle. There are a few interacting problems in the case of a non-free action, but in brief the primary problem is that we may have $W \cdot x = \{x\}$ for some $x = c \cdot r$ with $r \in R$. We will overcome this problem by introducing two new points $q_1, q_2 \in F \setminus W$. In fact, we will construct a set $Q = \{q_1, \ldots, q_6\} \subseteq F \setminus W$ of ``query points'' whose labels will hold important information. However $q_3, \ldots, q_6$ will not be needed until the second half of this section. See Figure \ref{fig:WF} for an illustration.

The following lemma is well known.

\begin{lem} \label{LEM MARKER}
Let $G \acts (X, \mu)$ be a {\pmp} action. If $Y \subseteq X$ is Borel and $F \subseteq G$ is finite, then there exists a Borel set $D \subseteq Y$ such that $Y \subseteq F^{-1} F \cdot D$ and $F \cdot d \cap F \cdot d' = \varnothing$ for all $d \neq d' \in D$. In particular, if $\mu(Y) > 0$ then $\mu(D) > 0$.
\end{lem}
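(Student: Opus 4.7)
The plan is to construct $D$ as a Borel subset of $Y$ that is \emph{maximal} with respect to the property that the translates $F \cdot d$, $d \in D$, are pairwise disjoint. Setting $H = F^{-1}F$ (a finite symmetric subset of $G$ containing $e$), the disjointness $F \cdot d \cap F \cdot d' = \varnothing$ for distinct $d, d' \in D$ is equivalent to $d' \notin H \cdot d$. So we are asking for a Borel maximal independent set in the symmetric Borel graph on $Y$ whose edge relation is $y \sim y'$ iff $y \neq y'$ and $y' \in H \cdot y$; this graph has bounded degree at most $|H|-1 \leq |F|^2 - 1$. Once such a $D$ is produced, the covering conclusion $Y \subseteq F^{-1}F \cdot D$ is automatic, because for any $y \in Y \setminus D$ maximality forces some $d \in D$ with $y \in H \cdot d$, whence $y \in F^{-1}F \cdot D$.

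To build $D$ Borel-measurably, I would enumerate $G = \{g_0 = e, g_1, g_2, \ldots\}$ and drive a greedy construction off this enumeration. Starting with $B_0 = Y$, at each stage $n \geq 1$ I would select a Borel set $C_n \subseteq B_{n-1}$ of ``provisional winners''---points $y$ which beat every $H$-neighbor still in $B_{n-1}$ according to a Borel tie-breaking rule read off from whether $g_i \cdot y \in B_{n-1}$ for small $i$---and then remove the entire set $H \cdot C_n$ to form $B_n$. Setting $D = \bigcup_n C_n$ gives a Borel set, independence is maintained at every stage (because any two rival neighbors never both qualify), and the bounded degree of the graph together with the fact that disqualifying a point requires only finitely many group elements from $H$ ensures that every $y \in Y$ is eventually either placed in $D$ or removed into $H \cdot D$, producing a Borel maximal independent set. (Equivalently, one may first extract a Borel proper $|H|$-coloring of the graph using the enumeration of $G$ and then run a $|H|$-stage greedy selection through the color classes.)

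The main obstacle is arranging the tie-breaking rule so that it is Borel and so that the construction covers all of $Y$ after countably many stages, even when the action is not free (so that a single pair $(y, y')$ of neighbors may have multiple group elements of $H$ witnessing the adjacency); the point is that membership of $y$ in $C_n$ depends only on the Borel data $\{i \leq n : g_i \cdot y \in B_{n-1}\}$, which keeps each $C_n$ Borel and handles multiplicities uniformly. Once $D$ has been obtained, the final measure statement is immediate: since $\mu$ is $G$-invariant and $Y \subseteq F^{-1}F \cdot D$,
$$\mu(Y) \leq \mu(F^{-1}F \cdot D) \leq \sum_{h \in F^{-1}F} \mu(h \cdot D) = |F^{-1}F| \cdot \mu(D) \leq |F|^2 \cdot \mu(D),$$
so $\mu(D) \geq \mu(Y)/|F|^2 > 0$ whenever $\mu(Y) > 0$.
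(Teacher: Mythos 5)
Your overall plan is the right one and matches the paper's: both proofs construct $D$ as a Borel maximal independent set in the bounded-degree graph on $Y$ with edges $y \sim y'$ iff $y \neq y'$ and $y' \in F^{-1}F \cdot y$, and both run a greedy construction to build it. However, the tie-breaking mechanism you propose has a genuine gap. You say that membership of $y$ in $C_n$ should ``depend only on the Borel data $\{i \leq n : g_i \cdot y \in B_{n-1}\}$.'' This data does not distinguish neighboring points in general, and the construction can fail outright. Concretely, take $G = \Z$ acting by shift on $X = \{0,1\}^\Z$ with Bernoulli measure, $Y = X$, and $F = \{0, 1\}$. Then $B_0 = Y = X$, so $\{i \leq n : g_i \cdot y \in B_0\} = \{0, 1, \ldots, n\}$ for \emph{every} $y$. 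Whatever antisymmetric ``beats'' relation you read off this data, no point beats any other, $C_n = \varnothing$ at every stage, and $D = \varnothing$. (If instead the relation is non-strict, both $y$ and a neighbor $y'$ would qualify, destroying independence.) The same problem recurs inductively: $B_{n-1}$ is always obtained from $Y$ by removing sets defined via the group, so it inherits whatever symmetry caused the tie.

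The essential ingredient you are missing is the standard Borel structure of $X$ itself: one needs a countable family $\{B_n\}$ of Borel sets that \emph{separates points} of $X$. This is what the paper uses. It builds a countable $G$-invariant algebra $\mathcal{C}$ generated by a separating sequence $B_n$ together with the sets $Y_k = \{y \in Y : |F^{-1}F \cdot y| = k\}$ (the latter to handle non-freeness), enumerates the elements of $\mathcal{C}$ on which $F$-translates are pairwise disjoint, and runs the greedy construction against this enumeration. The separating property guarantees that for every $y \in Y$, the finitely many points in $F^{-1}F \cdot y$ can be separated by finitely many $B_n$'s, producing a set in $\mathcal{C}$ that is $F$-discrete and contains $y$; this is what makes the greedy process exhaustive. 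Your group enumeration alone cannot supply this, because the group action is symmetric and cannot by itself distinguish distinct points in the same orbit. For the same reason, your parenthetical alternative (``extract a Borel proper $|H|$-coloring using the enumeration of $G$'') also needs a separating family; that is precisely what the Kechris--Solecki--Todorcevic coloring theorem uses. Your concluding measure computation, on the other hand, is correct.
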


\begin{proof}
Define the Borel graph $\Gamma \subseteq Y \times Y$ by $(y, y') \in \Gamma$ if and only if $y \neq y'$ and $F \cdot y \cap F \cdot y' \neq \varnothing$. Since every vertex has finite degree in $\Gamma$, a result of Kechris--Solecki--Todorcevic \cite[Prop 4.2 and Prop. 4.5]{KST99} states that there is a maximal (with respect to containment) Borel set $D$ which is $\Gamma$-independent (i.e. no two elements of $D$ are adjacent). Since $D$ is $\Gamma$-independent we have $F \cdot d \cap F \cdot d' = \varnothing$ for all $d \neq d'$, and since $D$ is maximal we have $Y \subseteq F^{-1} F \cdot D$.
\end{proof}

In the outline above we mentioned labeling points $0$ as needed so that for most $x \in X$ there is a point in $W \cdot x$ labeled $0$. The lemma below will help us determine where to place these $0$'s.

\begin{lem} \label{LEM SMARKER}
Let $G \acts (X, \mu)$ be a {\pmp} ergodic action with $(X, \mu)$ non-atomic, and let $\delta > 0$. Then there exists a finite symmetric set $W \subseteq G$ with $1_G \in W$ and a Borel set $D \subseteq X$ such that $0 < \mu(D) < \delta$ and $W \cdot x \cap D \neq \varnothing$ for all $x \in X$.
\end{lem}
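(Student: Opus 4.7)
The plan is direct: pick a tiny set of positive measure, translate it around by finitely many group elements to cover most of $X$, and then absorb the small leftover piece into the marker set itself. The machinery of Section~\ref{SECT PSEUDO} and even the preceding marker lemma (Lemma~\ref{LEM MARKER}) are not needed here; the argument uses only ergodicity and continuity of measure.

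Concretely, since $(X,\mu)$ is non-atomic, first choose any Borel set $D_0 \subseteq X$ with $0 < \mu(D_0) < \delta/2$. By ergodicity, $\mu(G \cdot D_0) = 1$, so by continuity of measure applied to the increasing union over finite subsets of $G$ there exists a finite $F \subseteq G$ with $\mu(F \cdot D_0) > 1 - \delta/2$. Let $Y = X \setminus F \cdot D_0$, set $D = D_0 \cup Y$, and set $W = F \cup F^{-1} \cup \{1_G\}$. Then $W$ is symmetric and contains $1_G$, and
$$0 < \mu(D_0) \leq \mu(D) \leq \mu(D_0) + \mu(Y) < \delta.$$

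To verify the covering property: if $x \in F \cdot D_0$, write $x = f \cdot d$ with $f \in F$ and $d \in D_0 \subseteq D$; then $f^{-1} \in W$ and $f^{-1} \cdot x = d \in D$, so $W \cdot x \cap D \neq \varnothing$. If instead $x \in Y$, then $1_G \cdot x = x \in Y \subseteq D$, so once again $W \cdot x \cap D \neq \varnothing$.

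There is no substantive obstacle in the proof; the lemma is essentially a quantitative restatement of the familiar fact that an ergodic action is approximately covered by finitely many translates of any positive-measure set, with the uncovered remainder folded into $D$ to turn the approximate covering into an exact one.
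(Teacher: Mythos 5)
Your proof is correct, and it takes a genuinely different and simpler route than the paper's. The paper chooses $F$ so that most points $x$ have $|F\cdot x| > 2/\delta$ (using that ergodicity plus non-atomicity forces almost every orbit to be infinite), then invokes the Kechris--Solecki--Todorcevic marker lemma (Lemma~\ref{LEM MARKER}) twice to produce a set $D$ whose $F$-translates are pairwise disjoint; this separation forces $\mu(D_A)\le\delta/2$ via the bound $\frac{2}{\delta}\mu(D_A)\le\int_{D_A}|F\cdot x|\,d\mu\le 1$. You invert the logic: pick the small set first (non-atomicity gives a $D_0$ with $0<\mu(D_0)<\delta/2$), use ergodicity plus countability of $G$ and continuity of measure to cover all but $\delta/2$ of $X$ by finitely many translates, and fold the leftover into $D$. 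This replaces two applications of Lemma~\ref{LEM MARKER} with a one-line ergodicity argument. The paper's construction does yield an extra property not stated in the lemma, namely $F\cdot d\cap F\cdot d'=\varnothing$ for distinct $d,d'\in D$, but that property is never used downstream (Propositions~\ref{PROP CODE} and \ref{PROP REC} only use the covering condition $W\cdot x\cap D\neq\varnothing$), so your shorter construction is fully adequate for the paper's purposes.

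Two small points worth making explicit in a final write-up. First, your argument really does give the covering condition for \emph{every} $x\in X$, not just almost every $x$: every point is either in $F\cdot D_0$ (handled by $F^{-1}\subseteq W$) or in $Y=X\setminus F\cdot D_0\subseteq D$ (handled by $1_G\in W$). Second, ergodicity is used exactly once, to say that the $G$-invariant Borel set $G\cdot D_0\supseteq D_0$ has full measure; after that, only countability of $G$ and continuity of measure are invoked. Both steps are airtight.
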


\begin{proof}
Since $(X, \mu)$ is non-atomic, we can fix a Borel set $D_1 \subseteq X$ with $0 < \mu(D_1) < \delta / 2$. As $G \acts (X, \mu)$ is ergodic, we have $\mu(G \cdot D_1) = 1$. So there must be a sufficiently large finite symmetric set $W \subseteq G$ with $1_G \in W$ satisfying $\mu(W \cdot D_1) > 1 - \delta/2$. Now set $D = D_1 \cup (X \setminus W \cdot D_1)$. Then $\mu(D) < \delta$ and $W \cdot D = X$.
\end{proof}

We will need the following fact from group theory.

\begin{lem}[B.H. Neumann, \cite{N}] \label{LEM BHN}
Let $G$ be a group, and let $H_i$, $1 \leq i \leq n$, be subgroups of $G$. Suppose there are group elements $g_i \in G$ so that
$$G = \bigcup_{i = 1}^n g_i \cdot H_i.$$
Then there is $i$ such that $|G : H_i| < \infty$.
\end{lem}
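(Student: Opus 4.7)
The plan is to prove this by strong induction on the pair $(r, n)$ in lexicographic order, where $n$ is the number of cosets in the cover and $r$ is the number of distinct subgroups among the $H_i$. For the base case $r = 1$, every coset in the cover is a coset of a single subgroup $K$, so $G = \bigcup_{i=1}^n g_i K$ immediately gives $[G : K] \leq n < \infty$.

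For the inductive step with $r \geq 2$, I would argue by contradiction, supposing that every $[G : H_i] = \infty$. First, group the cosets by the underlying subgroup: let $K_1, \ldots, K_r$ be the distinct subgroups, with $K_j$ appearing $n_j \geq 1$ times, so $\sum_j n_j = n$. The central observation is that for each index $s$, since $[G : K_s] = \infty$ but only $n_s$ cosets of $K_s$ appear in the cover, there exists a coset $b_s K_s$ disjoint from every $K_s$-coset in the cover. Consequently $b_s K_s \subseteq \bigcup_{j \neq s, i} a_{j,i} K_j$, and each intersection $b_s K_s \cap a_{j,i} K_j$ is either empty or a coset of $K_s \cap K_j$. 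Translating by $b_s^{-1}$ exhibits $K_s$ as covered by at most $n - n_s$ cosets of the subgroups $K_s \cap K_j$ (for $j \neq s$) inside $K_s$, using at most $r - 1$ distinct subgroups. By the inductive hypothesis applied with strictly smaller first coordinate, some $K_s \cap K_{\sigma(s)}$ (with $\sigma(s) \neq s$) has finite index in $K_s$.

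Because $\sigma : \{1, \ldots, r\} \to \{1, \ldots, r\}$ has no fixed points, its functional graph must contain a cycle $j_1 \to j_2 \to \cdots \to j_m \to j_1$ of length $m \geq 2$. A straightforward telescoping argument along this cycle shows that $L := K_{j_1} \cap \cdots \cap K_{j_m}$ has finite index in each $K_{j_t}$. Now the original cover can be refined by replacing every cycle coset $g K_{j_t}$ with its finite decomposition into cosets of $L$; the resulting cover of $G$ uses only $r' = r - m + 1 < r$ distinct subgroups (namely $L$ together with the non-cycle $K_j$'s). Applying the inductive hypothesis once more to this refined cover produces some subgroup in it of finite index in $G$. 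Since every non-cycle $K_j$ has infinite index by assumption, the finite-index subgroup must be $L$ itself. But $L \subseteq K_{j_1}$ forces $[G : L] \geq [G : K_{j_1}] = \infty$, which is the desired contradiction.

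The main obstacle is setting up the double induction cleanly, so that both applications of the inductive hypothesis --- inside the key observation and in closing the contradiction --- are justified by a strict decrease of the complexity measure $(r, n)$. The underlying geometric content is simply that an infinite-index subgroup has infinitely many cosets, forcing ``most'' of them to be absorbed by the other subgroups in the cover, which in turn forces nontrivial finite-index relations among the intersections $K_i \cap K_j$.
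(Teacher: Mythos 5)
Your proof is correct, but it takes a substantially more elaborate route than the paper's. Both proofs begin from the same observation: an infinite-index subgroup $H_i$ has a coset disjoint from every $H_i$-coset in the cover, hence that stray coset is swallowed by the cosets of the other subgroups. The paper then simply translates this back to cover $H$ itself by cosets of the \emph{other} $H_j$'s inside $G$, and substitutes: every $H$-coset in the original cover is replaced by finitely many cosets of the remaining subgroups, giving a new cover of $G$ with one fewer distinct subgroup. A single induction on the number of distinct subgroups then closes the argument directly, with no contradiction, no intersections, and no cycles. You instead intersect with $K_s$ and apply the inductive hypothesis \emph{inside} $K_s$ to the cover by cosets of $K_s \cap K_j$, extract a fixed-point-free self-map $\sigma$ on the index set, locate a cycle, telescope to show the common intersection $L$ has finite index in each cycle group, and then refine the cover by cosets of $L$ to drop the subgroup count and reach a contradiction. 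All the steps hold up---in particular the telescoping uses the standard fact that $[H : H \cap A] \leq [G : A]$, and your double induction on $(r, n)$ is sound though the $n$-coordinate is never actually needed (both applications of the inductive hypothesis strictly decrease $r$). The paper's version is shorter and avoids the detour through intersections and cycles entirely; your version does recover the same statement and illustrates the same mechanism (covers of $G$ propagating to covers of the subgroups themselves), but at the cost of a contradiction argument and a nontrivial combinatorial step. If you already see the substitution trick, you can bypass the whole cycle analysis.
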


The corollary below will allow us to construct the checkpoint $c$.

\begin{cor} \label{COR APPBHN}
Let $G \acts (X, \mu)$ be a {\pmp} ergodic action with $(X, \mu)$ non-atomic. Let $R \subseteq X$ have positive measure and let $W, T \subseteq G$ be finite. Then there are a Borel set $R' \subseteq R$ with $\mu(R') > 0$ and $c \in G$ such that $c W \cdot R' \cap T \cdot R' = \varnothing$.
\end{cor}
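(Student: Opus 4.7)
The plan is to argue by contradiction using Lemmas \ref{LEM MARKER} and \ref{LEM BHN}. First I would recast the desired conclusion: $cW \cdot R' \cap T \cdot R' = \varnothing$ is equivalent to $R' \cap g \cdot R' = \varnothing$ for every $g$ in the finite set $F_c := T^{-1} c W$, so the task becomes to find $c \in G$ and a positive-measure $R' \subseteq R$ that is ``independent'' under translation by each element of $F_c$. Note that $1_G \in F_c$ iff $c \in T W^{-1}$, so I restrict attention to $c \in G \setminus T W^{-1}$. The only local obstruction to finding such an $R'$ is points that are fixed by some $g \in F_c$, so define
$$Y_c := R \setminus \bigcup_{g \in F_c} \mathrm{Fix}(g), \qquad \mathrm{Fix}(g) := \{x \in X \: g \cdot x = x\}.$$

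Assuming $\mu(Y_c) > 0$, I would apply Lemma \ref{LEM MARKER} to $Y_c$ with the finite set $F := F_c \cup \{1_G\}$ to obtain a Borel $D \subseteq Y_c$ of positive measure satisfying $F \cdot d \cap F \cdot d' = \varnothing$ for all distinct $d, d' \in D$. Then $R' := D$ does the job: for $g \in F_c$, if some $d \in D$ equaled $g \cdot d''$ for $d'' \in D$, then either $d = d''$, forcing $d \in \mathrm{Fix}(g) \cap Y_c = \varnothing$, or $d \neq d''$, contradicting the disjointness of the $F$-stars. Hence $D \cap g \cdot D = \varnothing$ for every $g \in F_c$, and the conclusion $cW \cdot R' \cap T \cdot R' = \varnothing$ follows by unwinding the reformulation.

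Now suppose for contradiction that no such $c$, $R'$ exist. Then $Y_c$ is null for every $c \in G \setminus T W^{-1}$, so for almost every $x \in R$ and every such $c$ there exist $t \in T$, $w \in W$ with $t^{-1} c w \in \Stab_G(x)$, i.e.\ $c \in t \cdot \Stab_G(x) \cdot w^{-1}$. Intersecting over the countable set $G \setminus TW^{-1}$, and noting that $TW^{-1} \subseteq T \cdot \Stab_G(x) \cdot W^{-1}$ trivially since $1_G \in \Stab_G(x)$, I conclude $G = T \cdot \Stab_G(x) \cdot W^{-1}$ for a.e.\ $x \in R$. Writing $t \cdot \Stab_G(x) \cdot w^{-1} = (tw^{-1}) \cdot (w \Stab_G(x) w^{-1})$ exhibits this as a finite union of left cosets of the conjugate subgroups $w \Stab_G(x) w^{-1}$, $w \in W$, so Lemma \ref{LEM BHN} forces some such conjugate, and hence $\Stab_G(x)$ itself, to have finite index in $G$. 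But ergodicity together with the non-atomicity of $(X, \mu)$ forces almost every orbit to be infinite (otherwise $X$ would be supported on a single finite orbit, contradicting non-atomicity), so $[G : \Stab_G(x)] = |G \cdot x| = \infty$ a.e., the desired contradiction. The main conceptual step is the observation that fixed points of $F_c$ are the only local obstruction; once isolated and removed via $Y_c$, Lemma \ref{LEM MARKER} delivers the independent set and everything else is clean packaging for Neumann's lemma.
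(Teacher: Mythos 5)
Your proof is correct and uses the same three ingredients as the paper's --- B.H.\ Neumann's Lemma~\ref{LEM BHN}, the marker Lemma~\ref{LEM MARKER}, and the infinitude of a.e.\ orbit for a non-atomic ergodic action --- differing only in that you organize the argument as a proof by contradiction (all $Y_c$ null $\Rightarrow$ $G = T\cdot\Stab(x)\cdot W^{-1}$ $\Rightarrow$ finite index) while the paper argues directly (infinite index $\Rightarrow$ $T\cdot\Stab(r)\cdot W^{-1}\neq G$ $\Rightarrow$ countability yields $c$ and $R_0$ $\Rightarrow$ apply Lemma~\ref{LEM MARKER}). One small inaccuracy worth flagging: the parenthetical justifying ``a.e.\ orbit is infinite'' is not quite right --- a non-atomic ergodic measure with a.e.\ finite orbit need not concentrate on a single orbit; rather, one pushes $\mu$ forward along a Borel transversal of the (finite, hence smooth) orbit equivalence relation, observes the pushforward is non-atomic, and pulls back a subset of intermediate measure to a non-trivial $G$-invariant set --- but since the paper itself asserts this fact without proof, your argument's validity is unaffected.
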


\begin{proof}
Our assumptions imply that almost-every orbit is infinite. So for $\mu$-almost-every $r \in R$ the stability group $\Stab(r) = \{g \in G \: g \cdot r = r\}$ has infinite index in $G$ and thus by Lemma \ref{LEM BHN}
$$T \cdot \Stab(r) \cdot W^{-1} = \bigcup_{t \in T} \bigcup_{w \in W} t w^{-1} \cdot (w \Stab(r) w^{-1}) \neq G.$$
As $G$ is countable, there is $c \in G$ and a non-null Borel set $R_0 \subseteq R$ with
$$c \not\in T \cdot \Stab(r) \cdot W^{-1}$$
for all $r \in R_0$. It follows that $c W \cdot r \cap T \cdot r = \varnothing$ for all $r \in R_0$. Now apply Lemma \ref{LEM MARKER} to get positive measure Borel set $R' \subseteq R_0$ with $(c W \cup T) \cdot r \cap (c W \cup T) \cdot r' = \varnothing$ for all $r \neq r' \in R'$.
\end{proof}

The next lemma will later be applied six times in order to build $Q = \{q_1, \ldots, q_6\}$.

\begin{lem} \label{LEM AVOID}
Let $G \acts (X, \mu)$ be a {\pmp} ergodic action with $(X, \mu)$ non-atomic. Let $R, Y \subseteq X$ be positive measure Borel sets and let $T \subseteq G$ be finite. Then there are $q \in G$ and a Borel set $R' \subseteq R$ of positive measure such that $q \cdot R' \subseteq Y$ and $q \cdot R' \cap T \cdot R' = \varnothing$.
\end{lem}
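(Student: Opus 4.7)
The plan is: first find a single $q \in G$ together with a positive-measure set $B \subseteq R$ on which $q \cdot r \in Y$ and $q \cdot r \notin T \cdot r$ simultaneously, and then apply Lemma \ref{LEM MARKER} to shrink $B$ to $R'$ so that the remaining collisions $q \cdot r = t \cdot r'$ (for $r \neq r'$ in $R'$, $t \in T$) are also eliminated.

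The crux is the following ``large intersection'' fact: for $\mu$-almost every $r \in X$, $|G \cdot r \cap Y| > |T|$. Since the action is pmp, ergodic, and $(X,\mu)$ non-atomic, almost every orbit is infinite (a.s.\ finite orbits would force $X$ to consist essentially of a single finite orbit by ergodicity, contradicting non-atomicity). The $G$-invariant function $N(r) := |G \cdot r \cap Y|$ is therefore a.s.\ constant, equal to some $N_0 \in \{0,1,\dots,\infty\}$, with $N_0 \geq 1$ since $\mu(Y) > 0$. If $N_0$ were finite, then $E_G \res Y$ would be a countable Borel equivalence relation on $Y$ with classes of size $N_0$, hence smooth, yielding a Borel transversal $Z \subseteq Y$ of $E_G$ with $\mu(Z) \geq \mu(Y)/N_0 > 0$. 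But no pmp ergodic countable Borel equivalence relation with a.s.\ infinite classes admits a positive-measure Borel transversal: in the essentially free case this follows from the identity $|G| \cdot \mu(Z) = \int |\Stab(x)| \, d\mu(x)$ combined with $|\Stab(x)| < \infty$ a.s., and the general case reduces to this by quotienting by the action's kernel.

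Given the large-intersection fact, for each $g \in G$ set $B_g := \{r \in R : g \cdot r \in Y \text{ and } g \cdot r \notin T \cdot r\}$. For a.e.\ $r \in R$, since $|G \cdot r \cap Y| > |T|$ at least one $g$ witnesses membership in $B_g$; hence $\bigcup_{g \in G} B_g = R$ modulo a null set. By countability of $G$, some $B_q$ has positive measure; fix such a $q$. Applying Lemma \ref{LEM MARKER} to $B_q$ and the finite set $F := \{1_G\} \cup q^{-1} T$ gives a Borel $R' \subseteq B_q$ of positive measure with $F \cdot r \cap F \cdot r' = \varnothing$ for distinct $r, r' \in R'$.

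To verify: $q \cdot R' \subseteq Y$ because $R' \subseteq B_q$. For $q \cdot R' \cap T \cdot R' = \varnothing$, given $r, r' \in R'$ and $t \in T$ we show $q \cdot r \neq t \cdot r'$. If $r = r'$, this is immediate from $r \in B_q$. If $r \neq r'$, the property $F \cdot r \cap F \cdot r' = \varnothing$ applied to $1_G \in F$ and $q^{-1}t \in F$ gives $r \neq q^{-1} t \cdot r'$, equivalently $q \cdot r \neq t \cdot r'$. The main obstacle is the large-intersection step, whose content is the non-smoothness of the orbit equivalence relation $E_G$ in the pmp ergodic non-atomic setting; everything else is a routine application of Lemma \ref{LEM MARKER} and countable additivity.
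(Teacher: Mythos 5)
Your argument has a genuine gap in the step where you rule out a finite value of $N_0 := |G\cdot r \cap Y|$, and, separately, it is much longer than what the lemma requires.

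The gap: you reduce to the fact that an ergodic pmp aperiodic countable Borel equivalence relation admits no positive-measure Borel partial transversal, and then claim the general (non-free) case ``reduces to the essentially free case by quotienting by the action's kernel.'' This reduction is false. Quotienting $G$ by $\{g : g\cdot x = x \text{ a.e.}\}$ only produces a \emph{faithful} action, not an essentially free one, and there are faithful ergodic pmp actions of countably infinite groups with almost every stabilizer infinite. For instance, the group of finitary permutations of $\mathbb{N}$ acting on $(\{0,1\}^{\mathbb{N}}, \mu_{1/2}^{\mathbb{N}})$ is faithful (the kernel is trivial) and ergodic by Hewitt--Savage, yet for a.e.\ $x$ the stabilizer $\Stab(x)$ is the infinite group of finitary permutations preserving the partition $\{n : x_n = 0\} \sqcup \{n : x_n = 1\}$. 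For such actions your identity $|G|\cdot\mu(Z) = \int |\Stab(x)|\,d\mu(x)$ reads $\infty = \infty$ and gives nothing. Since this paper explicitly treats non-free actions, this case cannot be set aside. The fact you want is nevertheless true and follows directly from Lemma~\ref{LEM SIMPLEMIX}: if $Z$ were a positive-measure partial transversal of $E_G^X$, choose Borel $Z_1 \subseteq Z$ with $0 < \mu(Z_1) \le \mu(Z \setminus Z_1)$ (possible by non-atomicity), apply Lemma~\ref{LEM SIMPLEMIX} to get $\theta \in [[E_G^X]]$ with $\dom(\theta) = Z_1$ and $\rng(\theta) \subseteq Z \setminus Z_1$, and note that $z$ and $\theta(z)$ are then distinct, $E_G^X$-equivalent points of $Z$ --- contradicting the transversal property.

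Even with that repair, the route is a detour. The paper's own proof is immediate: shrink $R$ to a Borel $R_0 \subseteq R$ with $0 < \mu(R_0) < \mu(Y)/|T|$, so that $\mu(Y \setminus T\cdot R_0) > 0$; by ergodicity some $q\cdot R_0$ meets $Y \setminus T\cdot R_0$ in positive measure; set $R' := q^{-1}\big(q\cdot R_0 \cap (Y \setminus T\cdot R_0)\big)$. Then $q\cdot R' \subseteq Y$ and $q\cdot R' \cap T\cdot R_0 = \varnothing$, so a fortiori $q\cdot R' \cap T\cdot R' = \varnothing$. By making $T\cdot R_0$ too small to cover $Y$, the ``land in $Y$'' and ``avoid $T\cdot R'$'' conditions are satisfied simultaneously, with no need for a large-intersection fact and no need to invoke Lemma~\ref{LEM MARKER} to separate cross-collisions.
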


\begin{proof}
Let $R_0 \subseteq R$ be a Borel set with $\mu(R_0) > 0$ and $\mu(Y \setminus T \cdot R_0) > 0$. By ergodicity, there is $q \in G$ such that $R_0 \cap q^{-1} \cdot (Y \setminus T \cdot R_0)$ has positive measure. Set
\begin{equation*}
R' = R_0 \cap q^{-1} \cdot (Y \setminus T \cdot R_0). \qedhere
\end{equation*}
\end{proof}

Now we are ready to build $W, F \subseteq G$ and $c, q_1, \ldots, q_6 \in G$ as pictured in Figure \ref{fig:WF}. We mention that when we will later apply this lemma, the set $Y$ will be defined as $Y = \{y \in X : |W \cdot y| > 1\}$. We also mention that the equalities in clauses (v) and (vi) below are mostly due to the fact that $1_G \in W$, while in clause (vii) it would be preferable that the intersection be empty but due to non-trivial stabilizers the stated containment is the best one can hope for. Following this lemma we will immediately build a pre-partition $\beta$ with $R \in \ralg_G(\beta)$.

\begin{lem} \label{LEM TECH}
Let $G \acts (X, \mu)$ be a {\pmp} ergodic action with $(X, \mu)$ non-atomic. Let $Y \subseteq X$ be a Borel set of positive measure, let $W \subseteq G$ be finite and symmetric with $1_G \in W$, and let $m \in \N$. Then there exist $n \in \N$, $F \cup Q \cup \{c\} \subseteq G$, and a Borel set $R \subseteq X$ with $Q = \{q_1, \ldots, q_6\}$, $\mu(R) = \frac{1}{n}$, $n > m \cdot |F|$, and satisfying the following:
\begin{enumerate}
\item[\rm (i)] $Q \cdot R \subseteq Y$;
\item[\rm (ii)] $|(\{c\} \cup Q) \cdot r \setminus W \cdot r| = 7$ for all $r \in R$;
\item[\rm (iii)] $( W \cup \{c\} \cup Q )^2 \subseteq F$;
\item[\rm (iv)] $F \cdot r \cap F \cdot r' = \varnothing$ for all $r \neq r' \in R$;
\item[\rm (v)] $W q \cdot R \cap (W \cup \{c\} \cup Q) \cdot R = q \cdot R$ for every $q \in Q$;
\item[\rm (vi)] $c W \cdot R \cap (W \cup \{c\} \cup Q) \cdot R = c \cdot R$;
\item[\rm (vii)] $Q c \cdot R \cap (W \cup \{c\} \cup Q) \cdot R \subseteq c \cdot R$;
\item[\rm (viii)] for all $r \in R$, either $q_1 c \cdot r \neq c \cdot r$ or $q_2 c \cdot r = c \cdot r$.
\end{enumerate}
\end{lem}

\begin{proof}
Set $R_0 = X$. By induction on $1 \leq i \leq 6$ we choose $q_i \in G$ and a Borel set $R_i \subseteq R_{i-1}$ such that $\mu(R_i) > 0$, $q_i \cdot R_i \subseteq Y$, and
$$q_i \cdot R_i \cap W (W \cup \{q_j \: j < i\}) \cdot R_i = \varnothing.$$
Both the base case and the inductive steps are taken care of by Lemma \ref{LEM AVOID}. Set $Q = \{q_1, q_2, \ldots, q_6\}$. Then $q_i \cdot R_6 \subseteq q_i \cdot R_i \subseteq Y$ and $|Q \cdot r \setminus W \cdot r| = 6$ for all $r \in R_6$. Now apply Corollary \ref{COR APPBHN} to obtain $c \in G$ and a Borel set $R_c \subseteq R_6$ with $\mu(R_c) > 0$ and
$$c W \cdot R_c \cap (\{1_G\} \cup Q^{-1})(W \cup Q \cup W Q) \cdot R_c = \varnothing.$$
Set $F = (W \cup \{c\} \cup Q)^2$ so that (iii) is satisfied.

If there is $q \in Q$ with $q c \cdot r = c \cdot r$ for all $r \in R_c$, then set $R' = R_c$ and re-index the elements of $Q$ so that $q_2 = q$. Otherwise, we may re-index $Q$ and find a Borel set $R' \subseteq R_c$ of positive measure with $q_1 c \cdot r \neq c \cdot r$ for all $r \in R'$. Now apply Lemma \ref{LEM MARKER} to obtain a positive measure Borel set $R \subseteq R'$ with $F \cdot r \cap F \cdot r' = \varnothing$ for all $r \neq r' \in R$. By shrinking $R$ if necessary, we may suppose that $\mu(R) = \frac{1}{n}$ for some $n > m \cdot |F|$. Then (iv) is immediately satisfied, (viii) is satisfied since $R \subseteq R'$, and (i) is satisfied since $R \subseteq R_6$. Clause (ii) also holds since $c \cdot r \in c W \cdot r$ is disjoint from $(W \cup Q) \cdot r$ for every $r \in R$.

Recall that $W = W^{-1}$ and $1_G \in W$. Fix $1 \leq i \leq 6$. By the definition of $q_i$ we have $W q_i \cdot R \cap W \cdot R = \varnothing$, and if $j \neq i$ then $W q_i \cdot R \cap q_j \cdot R = \varnothing$. Also, the definition of $c$ implies that $W q_i \cdot R \cap c \cdot R = \varnothing$. Therefore
$$W q_i \cdot R \cap (W \cup \{c\} \cup Q) \cdot R \subseteq q_i \cdot R.$$
This establishes (v) since $1_G \in W$. By definition of $c$ we have $Q c \cdot R \cap (W \cup Q) \cdot R = \varnothing$. So (vii) follows. Similarly, $c W \cdot R \cap (W \cup Q) \cdot R = \varnothing$ which gives one inclusion in (vi), and the reverse inclusion follows from $1_G \in W$.
\end{proof}

Now we construct a pre-partition $\beta$ with $R \in \ralg_G(\beta)$.

\begin{lem} \label{LEM REC}
Let $G \acts (X, \mu)$ be a {\pmp} ergodic action with $(X, \mu)$ non-atomic. Let $W \subseteq G$ be finite and symmetric with $1_G \in W$, and let $D \subseteq X$ be a Borel set with $W \cdot x \cap D \neq \varnothing$ for all $x \in X$. Assume that the set $Y = \{x \in X \: |W \cdot x| \geq 2\}$ has positive measure, and let $F \cup Q \cup \{c\} \subseteq G$ and $R \subseteq X$ be as in Lemma \ref{LEM TECH}. If $\beta = \{B_0, B_1\}$ is a pre-partition satisfying
\begin{align}
B_1 & \supseteq (W \cup \{c, q_1\}) \cdot R, \quad \text{and}\label{eqn:B1}\\
B_0 & \supseteq (D \setminus F \cdot R) \bigcup \Big( F \cdot R \setminus (W \cup \{c\} \cup Q) \cdot R \Big) \bigcup q_2 \cdot R\label{eqn:B0}
\end{align}
then $R \in \ralg_G(\beta)$.
\end{lem}

\begin{proof}
We will use the roman numerals (i) through (viii) to refer to clauses of Lemma \ref{LEM TECH}. If $r \in R$ then it is immediate from the definitions that $(W \cup \{c, q_1\}) \cdot r \subseteq B_1$ and $q_2 \cdot r \in B_0$. So it suffices to show that if $x \not\in R$ then either $(W \cup \{c, q_1\}) \cdot x \cap B_0 \neq \varnothing$ or $q_2 \cdot x \in B_1$.

Fix $x \not\in R$. Let $w \in W$ be such that $w \cdot x \in D$. If $w \cdot x \in B_0$ then we are done. So suppose that $w \cdot x \not\in B_0$. Since $w \cdot x \in D \setminus B_0$, by (\ref{eqn:B0}) we must have $w \cdot x \in F \cdot R$. Now $w \cdot x \in F \cdot R \setminus B_0$ so again by (\ref{eqn:B0}) we obtain $w \cdot x \in (W \cup \{c\} \cup Q) \cdot R$. If $x \in B_0$ then we are done since $1_G \in W$. So suppose that $x \not\in B_0$. Since $W$ is symmetric, $x \in W \cdot w \cdot x$ and hence $x \in F \cdot R$ by (iii). Again, $x \in F \cdot R \setminus B_0$ so (\ref{eqn:B0}) implies $x \in (W \cup \{c\} \cup Q) \cdot R$.

The previous paragraph shows that if $W \cdot x$ is disjoint with $B_0$ then we must have $x \in (W \cup \{c\} \cup Q) \cdot R$. We will divide the remainder of the argument into three cases: $x \in W \cdot R \setminus R$, $x \in c \cdot R$, and $x \in Q \cdot R$. Along the way we will illuminate why $c, q_1, q_2$ are needed in this construction.

Suppose that $x \in W \cdot R \setminus R$. Here is the problem in which $W$ together with stabilizers may possess too much symmetry: even though $x \not\in R$, it may be that $W \cdot x \subseteq W \cdot R \subseteq B_1$. This problem is resolved by using the checkpoint $c$. Since $x \not\in R$, it follows from (vi) that $c \cdot x \not\in (W \cup \{c\} \cup Q) \cdot R$. By (iii) we have $c \cdot x \in F \cdot R$, so by (\ref{eqn:B0}) we find that $c \cdot x \in B_0$. Thus in the case $x \in W \cdot R \setminus R$ we are done.

Now suppose that $x \in c \cdot R$. The problem in this case is that we may have $W \cdot x = \{x\}$, and based on the conditions imposed on $c$ this situation is unavoidable to the best knowledge of the author. We handle this problem by using the points $q_1$ and $q_2$. Fix $r \in R$ with $x = c \cdot r$. By (viii) we have that either $q_1 c \cdot r \neq c \cdot r$ or $q_2 c \cdot r = c \cdot r$. In the latter case, (\ref{eqn:B1}) gives
$$q_2 \cdot x = q_2 c \cdot r = c \cdot r \in B_1$$
and we are done. So assume that $q_1 c \cdot r \neq c \cdot r$. Then $q_1 c \cdot r \not\in c \cdot R$ by (iii) and (iv) and so by (vii)
$$q_1 c \cdot r \not\in (W \cup \{c\} \cup Q) \cdot R.$$
As $q_1 c \in F$ by (iii), we find that
$$q_1 \cdot x = q_1 c \cdot r \in F \cdot R \setminus (W \cup \{c\} \cup Q) \cdot R \subseteq B_0$$
which finishes this case.

Finally, suppose that $x \in Q \cdot R$. Thankfully $Q$ does not create any new problems and the argument can terminate here. Fix $r \in R$ and $q \in Q$ with $x = q \cdot r$. By (i) $q \cdot r \in Y$ and hence there is $w \in W$ with $w q \cdot r \neq q \cdot r$. It follows $w q \cdot r \not\in q \cdot R$ by (iii) and (iv) and so by (v)
$$w q \cdot r \not\in (W \cup \{c\} \cup Q) \cdot R.$$
Therefore
\begin{equation*}
w \cdot x = w q \cdot r \in F \cdot R \setminus (W \cup \{c\} \cup Q) \cdot R \subseteq B_0.\qedhere
\end{equation*}
\end{proof}

We are now ready for the main result of this section.

\begin{prop} \label{PROP CODE}
Let $G \acts (X, \mu)$ be a {\pmp} ergodic action with $(X, \mu)$ non-atomic and let $0 < \delta < 1$. Then there are $\epsilon > 0$ and a Borel set $M \subseteq X$ with $\mu(M) = \delta$ with the following property: for any $S_1, S_2 \subseteq X$ satisfying $\mu(S_1) + \mu(S_2) < \epsilon$ there is a two-piece partition $\beta = \{B_0, B_1\}$ of $M$ with $S_1, S_2 \in \ralg_G(\beta)$.
\end{prop}

\begin{proof}
By Lemma \ref{LEM SMARKER}, there is a finite symmetric set $W \subseteq G$ with $1_G \in W$ and a Borel set $D \subseteq X$ with $\mu(D) < \delta / 2$ such that $W \cdot x \cap D \neq \varnothing$ for all $x \in X$. Note that if $|W \cdot x| = 1$ then $x \in D$. Thus the set $Y = \{x \in X \: |W \cdot x| \geq 2\}$ has positive measure. Apply Lemma \ref{LEM TECH} to obtain $F \cup \{c\} \cup Q \subseteq G$ with $Q = \{q_1, \ldots, q_6\}$ and $R \subseteq X$ with $\mu(R) = \frac{1}{n}$, where $n > 2 |F| / \delta$. Fix $k \in \N$ with
$$\log_2(2 n k) < k - 1$$
and let $Z_1$ and $Z_2$ be disjoint Borel subsets of $R$ with $\mu(Z_1) = \mu(Z_2) = \frac{1}{2 n k}$. Set $Z = Z_1 \cup Z_2$ and note that $\mu(Z) = \frac{1}{n k} = \frac{1}{k} \cdot \mu(R)$. Fix $\epsilon > 0$ with $\epsilon < \frac{1}{6 n k}$. Let $M \subseteq X$ be any Borel set with $D \cup F \cdot R \subseteq M$ and $\mu(M) = \delta$.

Apply Corollaries \ref{COR MAKEPART} and \ref{COR PERMUTE} to obtain a $\salg_G(\{Z, R\})$-expressible function $\rho \in [[E_G^X]]$ such that $\dom(\rho) = \rng(\rho) = R$, $\rho^k = \id_R$, and such that $\{\rho^i(Z) \: 0 \leq i < k\}$ is a partition of $R$. For each $j = 1, 2$, again apply these corollaries to obtain a $\salg_G(\{Z_j\})$-expressible function $\psi_j \in [E_G^X]$ such that $\dom(\psi_j) = \rng(\psi_j) = X$, $\psi_j^{2 n k} = \id_X$, and such that $\{\psi_j^i(Z_j) \: 0 \leq i < 2 n k\}$ is a partition of $X$. We mention that there are no assumed relationships between $\psi_1$, $\psi_2$, and $\rho$.

Let $S_1, S_2 \subseteq X$ be Borel sets with $\mu(S_1) + \mu(S_2) < \epsilon$. Our intention will be to encode how the sets $\psi_1^i(Z_1)$ meet $S_1$ and similarly how the sets $\psi_2^i(Z_2)$ meet $S_2$. For $1 \leq m \leq 2 n k$ and $j = 1, 2$, let $Z_j^m$ be the set of $z \in Z_j$ such that
$$| \{ 0 \leq i < 2 n k \: \psi_j^i(z) \in S_j\} | \geq m.$$
Then $Z_j^1 \supseteq Z_j^2 \supseteq \cdots \supseteq Z_j^{2 n k}$ and
$$\sum_{m = 1}^{2 n k} \mu(Z_1^m \cup Z_2^m) = \mu(S_1) + \mu(S_2) < \epsilon.$$
Setting $Z_j^* = Z_j \setminus Z_j^1$, we have
$$\mu(Z_j^*) = \mu(Z_j) - \mu(Z_j^1) > \frac{1}{2 n k} - \epsilon > \frac{1}{3 n k} > 2 \epsilon.$$
In particular,
\begin{equation} \label{EQN KAPPA}
\mu(Z_1^* \cup Z_2^*) - \sum_{m = 1}^{2 n k} \mu(Z_1^m \cup Z_2^m) > 4 \epsilon - \epsilon = 3 \epsilon.
\end{equation}
Set $Z^m = Z_1^m \cup Z_2^m$ and $Z^* = Z_1^* \cup Z_2^*$.

For each $1 \leq m \leq 2 n k$ we wish to build a function $\theta_m \in [[E_G^X]]$ which is expressible with respect to $\salg_G(\{ Z^*, Z^1, \ldots, Z^m \})$ and satisfies $\dom(\theta_m) = Z^m$ and
$$\rng(\theta_m) \subseteq Z^* \setminus \bigcup_{k = 1}^{m-1} \theta_k(Z^k).$$
We construct these functions inductively. When $m = 1$, we have $\mu(Z^1) < \epsilon < \mu(Z^*)$ and thus $\theta_1$ is obtained immediately from Lemma \ref{LEM SIMPLEMIX}. Now suppose that $\theta_1$ through $\theta_{m-1}$ have been defined. Then
$$Z^* \setminus \bigcup_{k = 1}^{m-1} \theta_k(Z^k)$$
lies in $\salg_G(\{Z^*, Z^1, \ldots, Z^{m-1} \})$ by Lemma \ref{LEM EXPMOVE}. By (\ref{EQN KAPPA}) we have
$$\mu(Z^m) < \epsilon < \mu(Z^*) - \sum_{k = 1}^{m-1} \mu(Z^k) = \mu \left( Z^* \setminus \bigcup_{k = 1}^{m-1} \theta_k(Z^k) \right).$$
Therefore we may apply Lemma \ref{LEM SIMPLEMIX} to obtain $\theta_m$. This completes the construction.

Define $f : \bigcup_{m = 1}^{2 n k} \rng(\theta_m) \rightarrow \{0, 1, \ldots, 2 n k - 1\}$ by setting $f(\theta_m(z)) = \ell$ for $z \in Z_j^m$ if and only if $\psi_j^\ell(z) \in S_j$, and
$$| \{0 \leq i \leq \ell \: \psi_j^i(z) \in S_j \} | = m.$$
For $i, t \in \N$ we let $\mathbb{B}_i(t) \in \{0, 1\}$ denote the $i^\text{th}$ digit in the binary expansion of $t$ (so $\mathbb{B}_i(t) = 0$ for all $i > \log_2(t) + 1$). Now define a Borel set $B_1 \subseteq X$ by the rule
$$x \in B_1 \Longleftrightarrow \begin{cases}
x \in W \cdot R & \text{or} \\
x \in c \cdot R & \text{or} \\
x \in q_1 \cdot R & \text{or} \\
x \in q_3 \cdot Z & \text{or} \\
x \in q_4 \cdot Z_1 & \text{or} \\
x \in q_5 \cdot Z^1 & \text{or} \\
x \in q_6 \cdot \theta_m(Z^{m+1}) & \text{for some } 1 \leq m < 2 n k, \text{ or} \\
x = q_6 \cdot \rho^i(z) & \text{where } 1 \leq i < k, \ z \in \dom(f), \\ & \qquad\qquad \text{and } \mathbb{B}_i(f(z)) = 1.
\end{cases}$$
It is important to note that $B_1 \subseteq (W \cup \{c\} \cup Q) \cdot R$. In particular, $B_1 \subseteq F \cdot R$ by Lemma \ref{LEM TECH}.(iii). We also define the Borel set
$$B_0 = M \setminus B_1 \supseteq (D \setminus F \cdot R) \cup \Big( F \cdot R \setminus (W \cup \{c\} \cup Q) \cdot R \Big) \cup \Big( (W \cup \{c\} \cup Q) \cdot R \setminus B_1 \Big).$$
Note that clauses (iii) and (iv) of Lemma \ref{LEM TECH} imply that for every $r \neq r' \in R$
$$(W \cup \{c\} \cup Q) \cdot r \cap (W \cup \{c\} \cup Q) \cdot r' = \varnothing.$$
Thus from clause (ii) of Lemma \ref{LEM TECH} we obtain the following one-way implications
$$x \in B_0 \Longleftarrow \begin{cases}
x \in q_2 \cdot R & \text{or} \\
x \in q_3 \cdot (R \setminus Z) & \text{or} \\
x \in q_4 \cdot (R \setminus Z_1) & \text{or} \\
x \in q_5 \cdot (R \setminus Z^1) & \text{or} \\
x \in q_6 \cdot \textstyle{\bigcap_{m = 1}^{2 n k - 1} (Z \setminus \theta_m(Z^{m+1}))} & \text{or} \\
x = q_6 \cdot \rho^i(z) & \text{where } 1 \leq i < k, \ z \in \dom(f), \\ & \qquad\qquad \text{and } \mathbb{B}_i(f(z)) = 0.
\end{cases}$$
In particular, $q_2 \cdot R \subseteq B_0$. Therefore $\beta = \{B_0, B_1\}$ satisfies the assumptions of Lemma \ref{LEM REC}.

We will now check that $S_1, S_2 \in \ralg_G(\beta)$. By Lemma \ref{LEM REC} we have $R \in \ralg_G(\beta)$. By $G$-invariance of $\ralg_G(\beta)$, we have $q_i \cdot R \in \ralg_G(\beta)$ for $1 \leq i \leq 6$. Since $q_i \cdot R \subseteq B_0 \cup B_1$, it immediately follows from the definition of $\ralg_G(\beta)$ that $B_0 \cap q_i \cdot R$ and $B_1 \cap q_i \cdot R$ lie in $\ralg_G(\beta)$. Defining the partition
$$\gamma = \Big\{R, X \setminus (R \cup Q \cdot R) \Big\} \cup \Big\{B_0 \cap q_i \cdot R : 1 \leq i \leq 6 \Big\} \cup \Big\{B_1 \cap q_i \cdot R : 1 \leq i \leq 6 \Big\},$$
we have $\salg_G(\gamma) \subseteq \ralg_G(\beta)$. It suffices to show that $S_1, S_2 \in \salg_G(\gamma)$.

We have $x \in Z$ if and only if $q_3 \cdot x \in B_1 \cap q_3 \cdot R \in \gamma$. Thus $Z \in \salg_G(\gamma)$. Similarly, $x \in Z_1$ if and only if $q_4 \cdot x \in B_1 \cap q_4 \cdot R$. As $R \in \gamma$, we conclude that $R, Z, Z_1, Z_2 = Z \setminus Z_1 \in \salg_G(\gamma)$. It follows that $\rho$, $\psi_1$, and $\psi_2$ are $\salg_G(\gamma)$-expressible.

We prove by induction on $1 \leq m \leq 2 n k$ that $Z^m, Z_1^m, Z_2^m \in \salg_G(\gamma)$ and that $\theta_m$ is $\salg_G(\gamma)$-expressible. Since $x \in Z^1$ if and only if $q_5 \cdot x \in B_1 \cap q_5 \cdot R$, we have $Z^1 \in \salg_G(\gamma)$. Also $Z_1^1 = Z^1 \cap Z_1$ and $Z_2^1 = Z^1 \cap Z_2$ are in $\salg_G(\gamma)$. So $Z^* = Z \setminus Z^1$, $Z_1^* = Z_1 \setminus Z_1^1$, and $Z_2^* = Z_2 \setminus Z_2^1$ are in $\salg_G(\gamma)$ as well. It follows that $\theta_1$ is $\salg_G(\gamma)$-expressible. Now inductively suppose that $Z^i \in \salg_G(\gamma)$ and that $\theta_i$ is $\salg_G(\gamma)$-expressible for all $1 \leq i \leq m$. Then $z \in Z^{m+1}$ if and only if $z \in Z^m$ and $q_6 \cdot \theta_m(z) \in B_1 \cap q_6 \cdot R$. In other words,
$$Z^{m+1} = \theta_m^{-1} \Big(q_6^{-1} \cdot (B_1 \cap q_6 \cdot R) \Big).$$
Thus $Z^{m+1} \in \salg_G(\gamma)$ by Lemmas \ref{LEM EXPMOVE} and \ref{LEM EXPGROUP}. Similarly, $Z_1^{m+1} = Z^{m+1} \cap Z_1$ and $Z_2^{m+1} = Z^{m+1} \cap Z_2$ are in $\salg_G(\gamma)$. Finally, $\theta_{m+1}$ is expressible with respect to $\salg_G(\{Z^*, Z^1, \ldots, Z^{m+1}\}) \subseteq \salg_G(\gamma)$. This completes the inductive argument.

Now to complete the proof we show that $S_1, S_2 \in \salg_G(\gamma)$. We first argue that $f$ is $\salg_G(\gamma)$-measurable. It follows from the previous paragraph and Lemma \ref{LEM EXPMOVE} that $\dom(f) \in \salg_G(\gamma)$. Observe that the numbers $\ell \in \rng(f)$ are distinguished by their first $(k - 1)$-binary digits $\mathbb{B}_i(\ell)$, $1 \leq i < k$, since by construction $\log_2(2 n k) < k - 1$. So for $0 \leq \ell < 2 n k$, if we set $I_0 = \{1 \leq i < k \: \mathbb{B}_i(\ell) = 0\}$ and $I_1 = \{1 \leq i < k\} \setminus I_0$ then we have
$$f^{-1}(\ell) = \dom(f) \cap \bigcap_{i \in I_0} \rho^{-i} \Big( q_6^{-1} \cdot (B_0 \cap q_6 \cdot R) \Big) \cap \bigcap_{i \in I_1} \rho^{-i} \Big( q_6^{-1} \cdot (B_1 \cap q_6 \cdot R) \Big).$$
Thus $f^{-1}(\ell) \in \salg_G(\gamma)$ by Lemmas \ref{LEM EXPMOVE} and \ref{LEM EXPGROUP}. Now suppose that $x \in S_j$. Then there is $z \in Z_j$ and $0 \leq \ell < 2 n k$ with $x = \psi_j^\ell(z)$. It follows that $z \in Z_j^m$ where
$$m = |\{0 \leq i \leq \ell \: \psi_j^i(z) \in S_j\}|.$$
Furthermore, $\ell = f(\theta_m(z))$. Conversely, if there is $1 \leq m \leq 2 n k$, $z \in Z_j^m$, and $0 \leq \ell < 2 n k$ with $x = \psi_j^\ell(z)$ and $f(\theta_m(z)) = \ell$, then $x \in S_j$. Therefore
\begin{equation*}
S_j = \bigcup_{\ell = 0}^{2 n k - 1} \bigcup_{m = 1}^{2 n k} \psi_j^\ell \Big( Z_j \cap \theta_m^{-1}(f^{-1}(\ell)) \Big) \in \salg_G(\gamma) \subseteq \ralg_G(\beta). \qedhere
\end{equation*}
\end{proof}

\section{Krieger's finite generator theorem} \label{SECT KRIEGER}

We now present the main theorem.

\begin{thm} \label{THM RELBASIC}
Let $G \acts (X, \mu)$ be a {\pmp} ergodic action with $(X, \mu)$ non-atomic, and let $\cF$ be a $G$-invariant sub-$\sigma$-algebra. If $\xi$ is a countable Borel partition of $X$, $0 < r \leq 1$, and $\pv$ is a probability vector with $\sH(\xi | \cF) < r \cdot \sH(\pv)$, then there is a Borel pre-partition $\alpha = \{A_i : 0 \leq i < |\pv|\}$ with $\mu(A_i) = r p_i$ for every $i$ and $\salg_G(\xi) \vee \cF \subseteq \ralg_G(\alpha) \vee \cF$.
\end{thm}

\begin{proof}
Apply Proposition \ref{PROP FINGEN} to obtain a finite Borel partition $\xi'$ with $\salg_G(\xi') \vee \cF = \salg_G(\xi) \vee \cF$ and $\sH(\xi' | \cF) < r \cdot \sH(\pv)$. Since $\xi'$ is finite, by Lemma \ref{LEM SHAN} we have that $\sH(\xi' | \cF)$ is equal to the infimum of $\sH(\xi' | \zeta)$ over finite $\cF$-measurable partitions $\zeta$ of $X$. So fix a finite $\cF$-measurable partition $\zeta$ with $\sH(\xi' | \zeta) < r \cdot \sH(\pv)$. Since $\sH(\xi' \vee \zeta | \zeta) = \sH(\xi' | \zeta)$ and $\salg_G(\xi' \vee \zeta) \vee \cF = \salg_G(\xi') \vee \cF$, we may replace $\xi'$ with $\xi' \vee \zeta$ if necessary and assume that $\xi'$ refines $\zeta$. Let $\pi_\zeta : \xi' \rightarrow \zeta$ be the coarsening map. Finally, by Lemma \ref{LEM SHAN} we may let $\qv$ be a finite probability vector which coarsens $\pv$ and satisfies $\sH(\xi' | \zeta) < r \cdot \sH(\qv) \leq r \cdot \sH(\pv)$. Since $\sH(\qv) > 0$, by permuting the coordinates of $\qv$ if necessary we may assume that $0 < q_0 \leq q_1$. 

Fix $\kappa > 0$ with $\sH(\xi' | \zeta) + \kappa < r \sH(\qv) - r \kappa$. Apply Corollary \ref{COR SEP} to $\qv, \kappa$ to obtain $\delta_0, \epsilon_0 > 0$ with the property that for all $0 < \epsilon < \epsilon_0$ and all sufficiently large $n$
\begin{equation} \label{eqn:sep}
\exists K \subseteq L_{\qv,\epsilon}^n \quad |K| \geq \exp(n \cdot \sH(\qv) - n \cdot \kappa) \quad \text{and} \quad \forall k \neq k' \in K \ \dHam(k, k') > 2 \delta_0.
\end{equation}
By Lemma \ref{LEM RELSTIRLING} we may shrink $\epsilon_0$ so that for all $0 < \epsilon < \epsilon_0$, all sufficiently large $n$, and all $z \in L_{\zeta, \epsilon}^n$
\begin{equation} \label{eqn:relstirling}
|\{c \in L_{\xi',\epsilon}^n : \pi_\zeta(c) = z\}| \leq \exp(n \cdot \sH(\xi' | \zeta) + n \cdot \kappa).
\end{equation}
Set $\delta = r \cdot q_0 \cdot \delta_0 / 6$. Let $M$ with $\mu(M) = \delta$ and $\epsilon > 0$ be given by Proposition \ref{PROP CODE}. By shrinking $\epsilon$ if necessary, we may assume that $\epsilon < \epsilon_0$ and $\epsilon |\qv| < \delta_0 / 6$. Let $n_0 \in \N$ be such that for all $n \geq \lfloor r n_0 \rfloor$: statement (\ref{eqn:sep}) holds, for all $z \in L_{\zeta, \epsilon}^n$ inequality (\ref{eqn:relstirling}) holds,
$$(\epsilon n + 1) \cdot |\qv| < (\delta_0 / 6) n, \quad \text{and}$$
\begin{equation} \label{EQN KAPPA2}
n \cdot \sH(\xi' | \zeta) + n \kappa < \lfloor r n \rfloor \cdot \sH(\qv) - \lfloor r n \rfloor \cdot \kappa.
\end{equation}

By Proposition \ref{PROP RLEM} there are $n \geq n_0$, Borel sets $S_1, S_2 \subseteq X$ with $\mu(S_1) + \mu(S_2) < \epsilon$, and a $\salg_G(\{S_1, S_2\})$-expressible $\theta \in [E_G^X]$ such that $E_\theta$ admits a $\salg_G(\{S_1, S_2\})$-measurable transversal $Y$ and such that for $\mu$-almost-every $x \in X$, the $E_\theta$ class of $x$ has cardinality $n$,
\begin{align*}
& \forall C \in \xi' \cup \zeta  & \mu(C) - \epsilon < \frac{|C \cap [x]_{E_\theta}|}{|[x]_{E_\theta}|} & < \mu(C) + \epsilon, \\
& \text{and} & \qquad \frac{|M \cap [x]_{E_\theta}|}{|[x]_{E_\theta}|} & < \mu(M) + \delta = 2 \delta.
\end{align*}
So we have $\nm_{\xi'}^\theta(y) \in L_{\xi', \epsilon}^n$ and $\nm_\zeta^\theta(y) \in L_{\zeta, \epsilon}^n$ for almost-every $y \in Y$.

We set $m = \lfloor r \cdot n \rfloor$ and encourage the reader to pay attention to the distinction between $m$ and $n$. Let $K \subseteq L_{\qv, \epsilon}^m$ be as given by (\ref{eqn:sep}) so that $\dHam(k, k') > 2 \delta_0$ for all $k \neq k' \in K$. By (\ref{eqn:sep}), (\ref{eqn:relstirling}), and (\ref{EQN KAPPA2}) we have that for every $z \in L_{\zeta, \epsilon}^n$
\begin{equation*}
\Big| \{c \in L_{\xi', \epsilon}^n : \pi_\zeta(c) = z\} \Big| \leq \exp \Big( n \cdot \sH(\xi' | \zeta) + n \cdot \kappa \Big) < \exp \Big( m \cdot \sH(\qv) - m \cdot \kappa \Big) \leq |K|.
\end{equation*}
Thus for every $z \in L_{\zeta, \epsilon}^n$ we may fix an injection $f_z : \{c \in L_{\xi', \epsilon}^n : \pi_\zeta(c) = z\} \rightarrow K \subseteq L_{\qv, \epsilon}^m$.

For $y \in Y$ set $z_y = \nm_\zeta^\theta(y)$, $c_y = \nm_{\xi'}^\theta(y)$, and $\tilde{a}_y = f_{z_y}(c_y) \in K \subseteq L_{\qv, \epsilon}^m$. Also define $M_y = \{0 \leq i < n \: \theta^i(y) \in M\}$. Then
$$|M_y| < 2 \delta \cdot n = 2 (q_0 \cdot \delta_0 / 6) r n < (\delta_0 / 3) (m + 1) \leq (2 \delta_0 / 3) m$$
for $\mu$-almost-every $y \in Y$. Since $\tilde{a}_y \in L_{\qv, \epsilon}^m$, Lemma \ref{LEM J} provides a set $J_y \subseteq \{0, 1, \ldots, m-1\}$ with $|J_y| < (\epsilon m + 1) \cdot |\qv| + (\delta_0/6) m < (\delta_0 / 3) m$ such that for all $0 \leq t < |\qv|$
\begin{equation} \label{eqn:dist}
\frac{1}{m} \cdot \Big| \tilde{a}_y^{-1}(t) \setminus (M_y \cup J_y) \Big| \leq \frac{1}{m}  \cdot \Big| \tilde{a}_y^{-1}(t) \setminus J_y \Big| < (1 - \delta_0 / 6) q_t.
\end{equation}
Since there are only finitely many choices for $J_y$, it is easy to arrange the map $y \mapsto J_y$ to be Borel. We then let $J$ be the Borel set $J = \{\theta^j(y) \: y \in Y, \ j \in J_y\}$.

Define the pre-partition $\alpha^0 = \{A_t^0 \: 0 \leq t < |\qv|\}$ by setting
$$A_t^0 = \{\theta^i(y) \: y \in Y, \ 0 \leq i < m, \ i \not\in M_y \cup J_y, \text{ and } \tilde{a}_y(i) = t\}.$$
Observe that $\mu(Y) = 1 / n$ since $Y$ is a transversal for $E_\theta$. By (\ref{eqn:dist}) we have that for every $0 \leq t < |\qv|$
\begin{align*}
\mu(A_t^0) & = \int_Y |\tilde{a}_y^{-1}(t) \setminus (M_y \cup J_y)| \ d \mu(y)\\
 & < \frac{m}{n} \cdot (1 - \delta_0/6) q_t \leq r (1 - \delta_0 / 6)\cdot q_t = \left(r - \frac{\delta}{q_0} \right) q_t.
\end{align*}
Thus $\mu(A_t^0) < r \cdot q_t$ and, since $q_0 \leq q_1$, we have $\mu(A_0^0) < r \cdot q_0 - \delta = r \cdot q_0 - \mu(M)$ and similarly $\mu(A_1^0) \leq r \cdot q_1 - \mu(M)$.

Apply Proposition \ref{PROP CODE} to get a partition $\beta = \{B_0, B_1\}$ of $M$ with $S_1, S_2 \in \ralg_G(\beta)$. Since $M$ is disjoint from $\cup \alpha^0$ and $\mu$ is non-atomic, there is a pre-partition $\alpha' = \{A_t' : 0 \leq t < |\qv|\}$ with $\mu(A_t') = r \cdot q_t$ for every $0 \leq t < |\qv|$, $A_0^0 \cup B_0 \subseteq A_0'$, $A_1^0 \cup B_1 \subseteq A_1'$, and $A_t^0 \subseteq A_t'$ for $2 \leq t < |\qv|$. The pre-partition $\alpha'$ extends both $\alpha^0$ and $\beta$. In particular, $S_1, S_2 \in \ralg_G(\beta) \subseteq \ralg_G(\alpha')$ by Lemma \ref{LEM EXT}. We have that $\theta$ is expressible and $Y$ is measurable with respect to $\salg_G(\{S_1, S_2\}) \subseteq \ralg_G(\alpha')$. By Lemma \ref{LEM EXPGROUP} it follows that $\theta^i$ is $\ralg_G(\alpha')$-expressible for all $i \in \Z$. We will show that $\xi' \subseteq \ralg_G(\alpha') \vee \cF$.

We claim that the map $y \in Y \mapsto \tilde{a}_y$ is $\ralg_G(\alpha')$-measurable. We check this via the definition of reduced $\sigma$-algebras. Fix $y \in Y$ and $x \in X$ with either $x \not\in Y$ or $\tilde{a}_x \neq \tilde{a}_y$. If $x \not\in Y$ then we are done since $Y \in \ralg_G(\alpha')$ (since then there is $g \in G$ with $g \cdot x, g \cdot y \in \cup \alpha'$ and with $\alpha'$ separating $g \cdot x$ from $g \cdot y$). So suppose that $x \in Y$ and $\tilde{a}_x \neq \tilde{a}_y$. Then $\dHam(\tilde{a}_y, \tilde{a}_x) > 2 \delta_0$ since $\tilde{a}_x, \tilde{a}_y \in K$. Set $I = \{0 \leq i < m \: \tilde{a}_y(i) \neq \tilde{a}_x(i)\}$ and note $|I| > 2 \delta_0 \cdot m$. Since
$$\Big| M_y \cup J_y \cup M_x \cup J_x \Big| < 2 \cdot (2 \delta_0 / 3) m + 2 \cdot (\delta_0 / 3) m = 2 \delta_0 m,$$
we may fix $i \in I \setminus (M_y \cup J_y \cup M_x \cup J_x)$. Since $\theta^i$ is $\ralg_G(\alpha')$-expressible, there is a $\ralg_G(\alpha')$-measurable partition $\{Z_g : g \in G\}$ of $X$ such that $\theta^i(z) = g \cdot z$ for all $g \in G$ and $z \in Z_g$. If $y$ and $x$ are separated by the partition $\{Z_g : g \in G\}$ then, since this partition is $\ralg_G(\alpha')$-measurable, there must be $h \in G$ with both $h \cdot y$ and $h \cdot x$ lying in $\cup \alpha'$ and separated by $\alpha'$. We are done in this case. So assume there is $g \in G$ with $y, x \in Z_g$. Then $g \cdot y = \theta^i(y)$ lies in $A_t^0 \subseteq A_t'$ where $t = \tilde{a}_y(i)$ and similarly $g \cdot x = \theta^i(x)$ lies in $A_s^0 \subseteq A_s'$ where $s = \tilde{a}_x(i)$. As $t \neq s$ we have that $g \cdot y$ and $g \cdot x$ lie in $\cup \alpha'$ and are separated by $\alpha'$. This proves the claim.

We observe that the map $y \in Y \mapsto z_y$ is $\ralg_G(\alpha') \vee \cF$-measurable since $\zeta \subseteq \cF$ and the value of $z_y$ is entirely determined by the location of $y$ in the partition $\bigvee_{i=0}^{n-1} \theta^{-i}(\zeta) \res Y$ of $Y$. This partition is $\ralg_G(\alpha') \vee \cF$-measurable by Lemmas \ref{LEM EXPMOVE} and \ref{LEM EXPGROUP}. So the map $y \in Y \mapsto (z_y, \tilde{a}_y)$ is $\ralg_G(\alpha') \vee \cF$-measurable. Since $c_y = f_{z_y}^{-1}(\tilde{a}_y)$, it follows that the map $y \in Y \mapsto c_y$ is $\ralg_G(\alpha') \vee \cF$-measurable as well. For $C \in \xi'$ we have
\begin{equation*}
C = \{\theta^i(y) \: y \in Y, \ 0 \leq i < n, \text{ and } c_y(i) = C\} = \bigcup_{i = 0}^{n-1} \theta^i \Big( \{y \in Y \: c_y(i) = C\} \Big).
\end{equation*}
Therefore $\xi' \subseteq \ralg_G(\alpha') \vee \cF$ by Lemmas \ref{LEM EXPMOVE} and \ref{LEM EXPGROUP}. We conclude that
$$\salg_G(\xi) \vee \cF = \salg_G(\xi') \vee \cF \subseteq \ralg_G(\alpha') \vee \cF.$$
Finally, since $(X, \mu)$ is non-atomic, $\mu(A_t') = r \cdot q_t$, and $\qv$ is a coarsening of $\pv$, there is a refinement $\alpha$ of $\alpha'$ with $\mu(A_t) = r \cdot p_t$ for all $0 \leq t < |\pv|$. Clearly we still have $\salg_G(\xi) \vee \cF \subseteq \ralg_G(\alpha) \vee \cF$.
\end{proof}

Note that Theorem \ref{INTRO THM2} follows from the above theorem by choosing a partition $\xi$ with $\sH(\xi | \cF)$ close to $\rh_G(X, \mu | \cF)$ and with $\salg_G(\xi) \vee \cF = \mathcal{B}(X)$.

\begin{cor} \label{COR ADD}
Let $G \acts (X, \mu)$ be a {\pmp} ergodic action with $(X, \mu)$ non-atomic, and let $\cF$ be a $G$-invariant sub-$\sigma$-algebra. If $G \acts (Y, \nu)$ is a factor of $G \acts (X, \mu)$ and $\Sigma$ is the sub-$\sigma$-algebra of $X$ associated to $Y$ then
$$\rh_G(X, \mu | \cF) \leq \rh_G(Y, \nu) + \rh_G(X, \mu | \cF \vee \Sigma).$$
\end{cor}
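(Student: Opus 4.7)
The plan is to construct, for every $\epsilon > 0$, a single countable Borel partition $\alpha$ of $X$ satisfying $\salg_G(\alpha) \vee \cF = \mathcal{B}(X)$ and $\sH(\alpha \mid \cF) < \rh_G(Y, \nu) + \rh_G(X, \mu \mid \cF \vee \Sigma) + 2 \epsilon$; letting $\epsilon \to 0$ then gives the stated inequality.

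The key enabling input is the observation, recorded by the author immediately after Theorem \ref{INTRO THM2} and justified by Corollary \ref{COR RELBASIC} (which applies since $(X,\mu)$ is non-atomic), that in the infimum defining $\rh_G(X, \mu \mid \cF \vee \Sigma)$ one may replace the conditional $\sH(\cdot \mid \cF \vee \Sigma)$ by the unconditional $\sH(\cdot)$. Using this, I would first pick a countable Borel partition $\beta$ of $X$ with $\salg_G(\beta) \vee \cF \vee \Sigma = \mathcal{B}(X)$ and $\sH(\beta) < \rh_G(X, \mu \mid \cF \vee \Sigma) + \epsilon$.

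Next, directly from the definition of $\rh_G(Y,\nu)$, I would choose a countable Borel partition $\gamma_Y$ of $Y$ with $\salg_G(\gamma_Y) = \mathcal{B}(Y)$ and $\sH_\nu(\gamma_Y) < \rh_G(Y,\nu) + \epsilon$, and pull it back through the factor map $\pi : X \to Y$ to obtain the countable Borel partition $\gamma = \pi^{-1}(\gamma_Y)$ of $X$. Because $\pi$ is $G$-equivariant and measure-preserving one has $\salg_G(\gamma) = \pi^{-1}(\salg_G(\gamma_Y)) = \pi^{-1}(\mathcal{B}(Y)) = \Sigma$ and $\sH_\mu(\gamma) = \sH_\nu(\gamma_Y)$.

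Setting $\alpha = \beta \vee \gamma$ then closes the argument. On the one hand
$$\salg_G(\alpha) \vee \cF \supseteq \salg_G(\beta) \vee \Sigma \vee \cF = \mathcal{B}(X),$$
so $\alpha$ generates $\mathcal{B}(X)$ relative to $\cF$, and on the other hand Lemma \ref{LEM SHAN}(iii)--(iv) gives
$$\sH(\alpha \mid \cF) \leq \sH(\alpha) \leq \sH(\beta) + \sH(\gamma) < \rh_G(X, \mu \mid \cF \vee \Sigma) + \rh_G(Y, \nu) + 2\epsilon.$$
The only point that requires any care is that this argument genuinely needs the observation from Theorem \ref{INTRO THM2}: the more direct chain-rule attempt via $\sH(\alpha \mid \cF) = \sH(\gamma \mid \cF) + \sH(\beta \mid \salg(\gamma) \vee \cF)$ cannot be cleanly finished, because $\salg(\gamma)$ is typically strictly smaller than $\salg_G(\gamma) = \Sigma$ and Lemma \ref{LEM SHAN}(iii) passes entropies the wrong way under enlargement of the conditioning $\sigma$-algebra. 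Replacing $\sH(\beta \mid \cF \vee \Sigma)$ by $\sH(\beta)$ at the outset is what sidesteps this.
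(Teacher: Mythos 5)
Your proof is correct and is essentially the paper's own argument: both pull back a near-optimal generating partition from $Y$, use Corollary \ref{COR RELBASIC} (via the remark following Theorem \ref{INTRO THM2}) to produce a partition $\beta$ with $\salg_G(\beta)\vee\cF\vee\Sigma=\mathcal{B}(X)$ whose \emph{unconditional} entropy is within $\epsilon$ of $\rh_G(X,\mu\mid\cF\vee\Sigma)$, and then join the two and bound $\sH(\beta\vee\gamma\mid\cF)\leq\sH(\beta)+\sH(\gamma)$. Your closing remark correctly identifies why replacing the conditional entropy by the unconditional one at the outset is the essential move; the only cosmetic difference from the paper is that it dispatches the infinite case explicitly before fixing $\epsilon$.
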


\begin{proof}
This is immediate if either $\rh_G(Y, \nu)$ or $\rh_G(X, \mu | \cF \vee \Sigma)$ is infinite, so suppose that both are finite. Fix $\epsilon > 0$ and fix a generating partition $\beta'$ for $G \acts (Y, \nu)$ with $\sH(\beta') < \rh_G(Y, \nu) + \epsilon / 2$. Pull back $\beta'$ to a partition $\beta$ of $X$. Then $\sH(\beta) = \sH(\beta')$ and $\salg_G(\beta) = \Sigma$. By definition of $\rh_G(X, \mu | \cF \vee \Sigma)$, there is a partition $\gamma'$ of $X$ with
$$\sH(\gamma' | \cF \vee \Sigma) < \rh_G(X, \mu | \cF \vee \Sigma) + \epsilon / 2$$
and $\salg_G(\gamma') \vee \cF \vee \Sigma = \mathcal{B}(X)$. Apply Theorem \ref{THM RELBASIC} to get a partition $\gamma$ of $X$ with
$$\sH(\gamma) < \rh_G(X, \mu | \cF \vee \Sigma) + \epsilon / 2$$
and $\salg_G(\gamma) \vee \cF \vee \Sigma = \mathcal{B}(X)$. Then
$$\mathcal{B}(X) = \salg_G(\gamma) \vee \cF \vee \Sigma = \salg_G(\gamma \vee \beta) \vee \cF,$$
and hence
\begin{equation*}
\rh_G(X, \mu | \cF) \leq \sH(\beta \vee \gamma | \cF) \leq \sH(\beta) + \sH(\gamma) < \rh_G(Y, \nu) + \rh_G(X, \mu | \cF \vee \Sigma) + \epsilon. \qedhere
\end{equation*}
\end{proof}

\section{Relative Rokhlin entropy and amenable groups} \label{SECT AMENABLE}

We verify that for free ergodic actions of amenable groups, relative Rokhlin entropy and relative Kolmogorov--Sinai entropy agree. This result was previously established in the non-relative case by the author and Tucker-Drob \cite{ST14}.

We first recall the definition of relative Kolmogorov--Sinai entropy. Let $G$ be a countably infinite amenable group, and let $G \acts (X, \mu)$ be a free {\pmp} action. For a partition $\alpha$ and a finite set $T \subseteq G$, we write $\alpha^T$ for the join $\bigvee_{t \in T} t \cdot \alpha$, where $t \cdot \alpha = \{t \cdot A : A \in \alpha\}$. Given a $G$-invariant sub-$\sigma$-algebra $\cF$, the relative Kolmogorov--Sinai entropy is defined as
$$\ksh_G(X, \mu | \cF) = \sup_\alpha \inf_{T \subseteq G} \frac{1}{|T|} \cdot \sH(\alpha^T | \cF),$$
where $\alpha$ ranges over all finite partitions and $T$ ranges over finite subsets of $G$ \cite{DP02}. Equivalently, one can replace the infimum with a limit over a F{\o}lner sequence $(T_n)$ \cite{OW87}. Recall that a sequence $T_n \subseteq G$ of finite sets is a \emph{F{\o}lner sequence} if
$$\lim_{n \rightarrow \infty} \frac{|\Bnd{K}{T_n}|}{|T_n|} = 0$$
for every finite $K \subseteq G$, where $\Bnd{K}{T} = \{t \in T : t K \not\subseteq T\}$. We also write $\Int{K}{T}$ for $T \setminus \Bnd{K}{T}$.

\begin{prop} \label{PROP RELROK}
Let $G$ be a countably infinite amenable group, let $G \acts (X, \mu)$ be a free ergodic action, and let $\cF$ be a $G$-invariant sub-$\sigma$-algebra. Then the relative Kolmogorov--Sinai entropy and relative Rokhlin entropy coincide:
$$\ksh_G(X, \mu | \cF) = \rh_G(X, \mu | \cF).$$
\end{prop}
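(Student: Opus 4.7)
The plan is to establish both inequalities separately. The inequality $h_G(X, \mu|\cF) \leq \rh_G(X, \mu|\cF)$ is the easier of the two. For any countable Borel partition $\alpha$ satisfying $\salg_G(\alpha) \vee \cF = \mathcal{B}(X)$ with $\sH(\alpha|\cF) < \infty$, the relative Kolmogorov--Sinai generator theorem for free ergodic actions of amenable groups gives $h_G(X, \mu|\cF) = h_G(X, \mu, \alpha|\cF) = \lim_n \frac{1}{|T_n|}\sH(\alpha^{T_n}|\cF)$ along any F{\o}lner sequence $(T_n)$. The $G$-invariance of $\cF$ gives $\sH(t \cdot \alpha | \cF) = \sH(\alpha|\cF)$ for each $t \in G$, and subadditivity of conditional Shannon entropy then yields $\frac{1}{|T_n|}\sH(\alpha^{T_n}|\cF) \leq \sH(\alpha|\cF)$; taking the infimum over all such $\alpha$ produces the inequality. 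This direction can alternatively be deduced from Corollary \ref{COR ADD} combined with Seward--Tucker-Drob's absolute Rokhlin generator theorem \cite{ST14} applied to both $(X, \mu)$ and $(Y, \nu)$ and the Abramov--Rokhlin addition formula $h_G(X, \mu) = h_G(Y, \nu) + h_G(X, \mu|\cF)$.

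The reverse inequality $\rh_G(X, \mu|\cF) \leq h_G(X, \mu|\cF)$ requires constructing $\cF$-generators of small Shannon entropy, and is the main technical step. Given $\epsilon > 0$, the strategy is as follows. First, apply \cite{ST14} to $(Y, \nu)$ to obtain a Borel generator $\beta_Y$ of $Y$ with $\sH(\beta_Y) < h_G(Y, \nu) + \epsilon$, and pull back via the factor map to a countable partition $\beta \subseteq \cF$ with $\salg_G(\beta) = \cF$ and $\sH(\beta) = \sH(\beta_Y)$. Then construct a Borel generator $\alpha$ of $(X, \mu)$ that refines $\beta$ and satisfies $\sH(\alpha) < h_G(X, \mu) + \epsilon$. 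Since $\alpha \supseteq \beta$ and $\beta \subseteq \cF$, the identity $\sH(\alpha) = \sH(\beta) + \sH(\alpha|\beta)$ combined with the lower bound $\sH(\beta) \geq h_G(Y, \nu)$ (from the Kolmogorov--Sinai inequality applied to the generator $\beta_Y$) and the Abramov--Rokhlin formula gives
$\sH(\alpha|\cF) \leq \sH(\alpha|\beta) = \sH(\alpha) - \sH(\beta) < h_G(X, \mu) - h_G(Y, \nu) + \epsilon = h_G(X, \mu|\cF) + \epsilon$.
Letting $\epsilon \to 0$ then yields $\rh_G(X, \mu|\cF) \leq h_G(X, \mu|\cF)$.

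The main obstacle will be the construction of a generator $\alpha$ of $X$ that refines the prescribed $\cF$-measurable partition $\beta$ with $\sH(\alpha)$ close to $h_G(X, \mu)$ -- essentially a relative form of \cite{ST14}. The naive choice $\alpha = \alpha_0 \vee \beta$ with $\alpha_0$ an absolute generator of near-minimal entropy is insufficient, since subadditivity only yields $\sH(\alpha_0 \vee \beta) \leq \sH(\alpha_0) + \sH(\beta) \approx h_G(X, \mu) + h_G(Y, \nu)$, which is too large by the factor $\sH(\beta)$. One natural remedy is to invoke Proposition \ref{PROP OE} together with a relative Ornstein--Weiss-type orbit equivalence producing a $\Z$-action on $X$ having the same orbits as $G$ and with $\cF$-measurable orbit-change cocycle, thereby reducing to the classical relative Rokhlin generator theorem for $\Z$-actions. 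Alternatively, one can adapt the construction of \cite{ST14} to the relative setting by performing the Rokhlin--tower and coding arguments fiber-wise over $Y$ via the disintegration $\mu = \int \mu_y \, d\nu(y)$, exploiting the added flexibility of Corollary \ref{COR RELBASIC} to guarantee that the resulting partition respects $\beta$ while its conditional Shannon entropy approaches $h_G(X, \mu|\cF)$.
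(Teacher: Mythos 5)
Your first inequality, $h_G(X,\mu|\cF) \le \rh_G(X,\mu|\cF)$, is morally the same as the paper's argument, though the paper avoids invoking a ``relative Kolmogorov--Sinai generator theorem'' outright and instead derives the bound directly from the definition of $h_G(X,\mu|\cF)$ as a supremum over finite $\beta$: it uses $\sH(\beta|\salg_G(\alpha)\vee\cF)=0$ to find a finite $K$ with $\sH(\beta|\alpha^K\vee\cF)<\epsilon$, then a Følner estimate. This is worth noting because it sidesteps the need to justify convergence of $\frac{1}{|T_n|}\sH(\alpha^{T_n}|\cF)$ for a countably infinite generator $\alpha$ and the need to equate it with the supremum; your version requires both.

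The second inequality, $\rh_G(X,\mu|\cF) \le h_G(X,\mu|\cF)$, is where the real gap lies. Your main plan --- pull back a near-optimal generator $\beta$ of $(Y,\nu)$, then find a generator $\alpha$ of $(X,\mu)$ \emph{refining $\beta$} with $\sH(\alpha)<h_G(X,\mu)+\epsilon$ --- requires, as you concede, precisely a relative form of \cite{ST14}. Since the statement you are trying to prove \emph{is} that relative Rokhlin generator theorem, the plan is circular unless the auxiliary result is supplied independently. Your remedy (a) points in the right direction (Ornstein--Weiss orbit equivalence plus Proposition \ref{PROP OE}), but misses two essential points that the paper's proof handles carefully. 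First, the factor $(Y,\nu)$ associated to an arbitrary $G$-invariant $\cF$ need not be a \emph{free} $G$-action (nor non-atomic), so neither \cite{ST14} nor Ornstein--Weiss can be applied to $(Y,\nu)$ directly; the paper fixes this by first taking a free factor $(Z,\eta)$ of $(X,\mu)$ with $\rh_G(Z,\eta)<\epsilon$ and replacing $\cF$ by $\cF\vee\Sigma$ throughout, which forces the new $Y$ to be free and non-atomic and only costs $\epsilon$ by Corollary \ref{COR ADD}. Second, you invoke a ``classical relative Rokhlin generator theorem for $\Z$-actions,'' but no such theorem is used or cited in the paper; instead, the $\Z$-action on $Y$ is chosen by Ornstein--Weiss so that $h_\Z(Y,\nu)=0$, which together with Abramov--Rokhlin and Corollary \ref{COR ADD} collapses the relative $\Z$-quantities to absolute ones, and then only the \emph{absolute} Rokhlin generator theorem for $\Z$ is needed. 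That zero-entropy choice of the $\Z$-action, together with the $\cF\vee\Sigma$ augmentation, is the crux of the paper's argument and is absent from your outline, so as written remedy (a) does not close the gap.
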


\begin{proof}
We first show that $\ksh_G(X, \mu | \cF) \leq \rh_G(X, \mu | \cF)$. If $\rh_G(X, \mu | \cF) = \infty$ then there is nothing to show. So suppose that $\rh_G(X, \mu | \cF) < \infty$ and fix $\epsilon > 0$. Let $\alpha$ be a countable partition with $\salg_G(\alpha) \vee \cF = \mathcal{B}(X)$ and $\sH(\alpha | \cF) < \rh_G(X, \mu | \cF) + \epsilon$. Let $\beta$ be any finite partition of $X$ and let $(T_n)$ be a F{\o}lner sequence. Then by Lemma \ref{LEM SHAN}
$$0 = \sH(\beta | \salg_G(\alpha) \vee \cF) = \inf_{K \subseteq G} \sH(\beta | \alpha^K \vee \cF),$$
where $K$ ranges over finite subsets of $G$. Fix $K \subseteq G$ so that $\sH(\beta | \alpha^K \vee \cF) < \epsilon$. Note that $\sH(t \cdot \beta | \alpha^{t K} \vee \cF) < \epsilon$ for all $t \in G$. Therefore
\begin{align*}
\lim_{n \rightarrow \infty} \frac{1}{|T_n|} & \cdot \sH(\beta^{T_n} | \cF)\\
 & \leq \lim_{n \rightarrow \infty} \frac{1}{|T_n|} \cdot \sH(\alpha^{T_n} \vee \beta^{T_n} | \cF)\\
 & = \lim_{n \rightarrow \infty} \frac{1}{|T_n|} \cdot \sH(\alpha^{T_n} | \cF) + \frac{1}{|T_n|} \cdot \sH(\beta^{T_n} | \alpha^{T_n} \vee \cF)\\
 & \leq \lim_{n \rightarrow \infty} \frac{1}{|T_n|} \cdot \sum_{t \in T_n} \Big( \sH(t \cdot \alpha | \cF) + \sH(t \cdot \beta | \alpha^{T_n} \vee \cF) \Big)\\
 & < \lim_{n \rightarrow \infty} \rh_G(X, \mu | \cF) + \epsilon + \frac{|\Int{K}{T_n}|}{|T_n|} \cdot \epsilon + \frac{|\Bnd{K}{T_n}|}{|T_n|} \cdot \sH(\beta)\\
 & = \rh_G(X, \mu | \cF) + 2 \epsilon.
\end{align*}
Now let $\epsilon$ tend to $0$ and then take the supremum over all $\beta$.

Now we argue that $\rh_G(X, \mu | \cF) \leq \ksh_G(X, \mu | \cF)$. Again this is immediate if $\ksh_G(X, \mu | \cF) = \infty$, so we assume $\ksh_G(X, \mu | \cF) < \infty$. Since the action of $G$ is free, a theorem of Seward and Tucker-Drob \cite{ST14} states that there is a factor action $G \acts (Z, \eta)$ of $(X, \mu)$ such that the action of $G$ on $Z$ is free and $\rh_G(Z, \eta) < \epsilon$. Let $\Sigma$ be the $G$-invariant sub-$\sigma$-algebra of $X$ associated to $Z$, and let $G \acts (Y, \nu)$ be the factor of $(X, \mu)$ associated to $\cF \vee \Sigma$. Then $G$ acts freely on $(Y, \nu)$ since $(Y, \nu)$ factors onto $(Z, \eta)$. By the Ornstein--Weiss theorem \cite{OW80}, all free ergodic actions of countably infinite amenable groups are orbit equivalent. In particular, there is a free ergodic {\pmp} action $\Z \acts (Y, \nu)$ which has the same orbits as $G \acts (Y, \nu)$ and has $0$ Kolmogorov--Sinai entropy, $\ksh_\Z(Y, \nu) = 0$. By the Rokhlin generator theorem \cite{Roh67}, we have $\rh_\Z(Y, \nu) = 0$ as well.

Let's say $\Z = \langle t \rangle$. Define $c : Y \rightarrow G$ by
$$c(y) = g \Longleftrightarrow t \cdot y = g \cdot y.$$
Let $f : (X, \mu) \rightarrow (Y, \nu)$ be the factor map, and let $\Z$ act on $(X, \mu)$ by setting
$$t \cdot x = c(f(x)) \cdot x.$$
Then $\cF \vee \Sigma$ and the actions of $G$ and $\Z$ on $(X, \mu)$ satisfy the assumptions of Proposition \ref{PROP OE}. Equivalently, in the terminology of Rudolph--Weiss \cite{RW00}, the orbit-change cocycles between the actions of $G$ and $\Z$ on $X$ are $\cF \vee \Sigma$-measurable. Thus $\ksh_G(X, \mu | \cF \vee \Sigma) = \ksh_\Z(X, \mu | \cF \vee \Sigma)$ by \cite[Theorem 2.6]{RW00}. Also, since $\rh_\Z(X, \mu | \cF \vee \Sigma) \leq \rh_\Z(X, \mu)$ and $\rh_\Z(Y, \nu) = 0$, it follows from Corollary \ref{COR ADD} that
\begin{equation} \label{eqn:Z}
\rh_\Z(X, \mu | \cF \vee \Sigma) = \rh_\Z(X, \mu).
\end{equation}

We have
\begin{equation*}
\begin{array}{rcll}
\ksh_G(X, \mu | \cF \vee \Sigma) & = & \ksh_\Z(X, \mu | \cF \vee \Sigma) & \text{by the Rudolph--Weiss theorem \cite{RW00}}\\
                  & = & \ksh_\Z(X, \mu) - \ksh_\Z(Y, \nu) & \text{by the Abramov--Rokhlin theorem \cite{AR62}}\\
                  & = & \ksh_\Z(X, \mu) & \text{since } \ksh_\Z(Y, \nu) = 0\\
                  & = & \rh_\Z(X, \mu) & \text{by the Rokhlin generator theorem \cite{Roh67}}\\
                  & = & \rh_\Z(X, \mu | \cF \vee \Sigma) & \text{by Equation \ref{eqn:Z}}\\
                  & = & \rh_G(X, \mu | \cF \vee \Sigma) & \text{by Proposition \ref{PROP OE}}
\end{array}
\end{equation*}
So $\ksh_G(X, \mu | \cF \vee \Sigma) = \rh_G(X, \mu | \cF \vee \Sigma)$. Also, it is immediate from the definitions that $\ksh_G(X, \mu | \cF \vee \Sigma) \leq \ksh_G(X, \mu | \cF)$. Finally, by Corollary \ref{COR ADD} we have
$$\rh_G(X, \mu | \cF) \leq \rh_G(Z, \eta) + \rh_G(X, \mu | \cF \vee \Sigma) < \epsilon + \ksh_G(X, \mu | \cF \vee \Sigma) \leq \epsilon + \ksh_G(X, \mu | \cF).$$
Now let $\epsilon$ tend to $0$.
\end{proof}

\thebibliography{999}

\bibitem{AR62}
L. M. Abramov and V. A. Rohlin,
\textit{Entropy of a skew product of mappings with invariant measure}, Vestnik Leningrad. Univ. 17 (1962), no. 7, 5--13.

\bibitem{Al}
A. Alpeev,
\textit{On Pinsker factors for Rokhlin entropy}, Journal of Mathematical Sciences 209 (2015), no. 6, 826--829.

\bibitem{AS}
A. Alpeev and B. Seward,
\textit{Krieger's finite generator theorem for actions of countable groups III}, preprint. https://arxiv.org/abs/1705.09707.

\bibitem{B10b}
L. Bowen,
\textit{Measure conjugacy invariants for actions of countable sofic groups}, Journal of the American Mathematical Society 23 (2010), 217--245.

\bibitem{B12}
L. Bowen,
\textit{Sofic entropy and amenable groups}, Ergod. Th. \& Dynam. Sys. 32 (2012), no. 2, 427--466.

\bibitem{B12b}
L. Bowen,
\textit{Every countably infinite group is almost Ornstein}, in Dynamical Systems and Group Actions, Contemp. Math., 567, Amer. Math. Soc., Providence, RI, 2012, 67--78.

\bibitem{B16}
L. Bowen,
\textit{Zero entropy is generic}, Entropy 18 (2016), no. 6.

\bibitem{C72}
J. P. Conze,
\textit{Entropie d'un groupe ab\'{e}lien de transformations}, Z. Wahrscheinlichkeitstheorie verw. Geb. 15 (1972), 11--30.

\bibitem{CsKo}
I. Csisz\'{a}r and J. K\"{o}rner,
Information Theory: Coding Theorems for Discrete Memoryless Systems. Cambridge University Press, New York, 2011.

\bibitem{DP02}
A. Danilenko and K. Park,
\textit{Generators and Bernoullian factors for amenable actions and cocycles on their orbits}, Ergod. Th. \& Dynam. Sys. 22 (2002), 1715--1745.

\bibitem{De74}
M. Denker,
\textit{Finite generators for ergodic, measure-preserving transformations}, Prob. Th. Rel. Fields 29 (1974), no. 1, 45--55.

\bibitem{Do11}
T. Downarowicz,
Entropy in Dynamical Systems. Cambridge University Press, New York, 2011.

\bibitem{GS15}
D. Gaboriau and B. Seward,
\textit{Cost, $\ell^2$-Betti numbers, and the sofic entropy of some algebraic actions}, preprint. http://arxiv.org/abs/1509.02482.

\bibitem{GJS09}
S. Gao, S. Jackson, and B. Seward,
\textit{A coloring property for countable groups}, Mathematical Proceedings of the Cambridge Philosophical Society 147 (2009), no. 3, 579--592.

\bibitem{GJS12}
S. Gao, S. Jackson, and B. Seward,
\textit{Group colorings and Bernoulli subflows}, Memoirs of the American Mathematical Society 241 (2016), no. 1141, 1--241.

\bibitem{Gl03}
E. Glasner,
\textit{Ergodic theory via joinings}. Mathematical Surveys and Monographs, 101. American Mathematical Society, Providence, RI, 2003. xii+384 pp.

\bibitem{GK76}
C. Grillenberger and U. Krengel,
\textit{On marginal distributions and isomorphisms of stationary processes}, Math. Z. 149 (1976), no. 2, 131--154.

\bibitem{KaW72}
Y. Katznelson and B. Weiss,
\textit{Commuting measure preserving transformations}, Israel J. Math. 12 (1972), 161--173.

\bibitem{K95}
A. Kechris,
Classical Descriptive Set Theory. Springer-Verlag, New York, 1995.

\bibitem{KST99}
A. Kechris, S. Solecki, and S. Todorcevic,
\textit{Borel chromatic numbers}, Adv. in Math. 141 (1999), 1--44.

\bibitem{KL11a}
D. Kerr and H. Li,
\textit{Entropy and the variational principle for actions of sofic groups}, Invent. Math. 186 (2011), 501--558.

\bibitem{KL13}
D. Kerr and H. Li,
\textit{Soficity, amenability, and dynamical entropy}, American Journal of Mathematics 135 (2013), 721--761.

\bibitem{KL11b}
D. Kerr and H. Li,
\textit{Bernoulli actions and infinite entropy}, Groups Geom. Dyn. 5 (2011), 663--672.

\bibitem{KiW02}
Y. Kifer and B. Weiss,
\textit{Generating partitions for random transformations}, Ergod. Th. \& Dynam. Sys. 22 (2002), 1813--1830.

\bibitem{Kr70}
W. Krieger,
\textit{On entropy and generators of measure-preserving transformations}, Trans. Amer. Math. Soc. 149 (1970), 453--464.

\bibitem{Or70a}
D. Ornstein,
\textit{Bernoulli shifts with the same entropy are isomorphic}, Advances in Math. 4 (1970), 337--348.

\bibitem{Or70b}
D. Ornstein,
\textit{Two Bernoulli shifts with infinite entropy are isomorphic}, Advances in Math. 5 (1970), 339--348.

\bibitem{OW80}
D. Ornstein and B. Weiss,
\textit{Ergodic theory of amenable group actions. I : The Rohlin lemma}, Bull. Amer. Math. Soc. 2 (1980), no. 1, 161--164.

\bibitem{OW87}
D. Ornstein and B. Weiss,
\textit{Entropy and isomorphism theorems for actions of amenable groups}, Journal d'Analyse Math\'{e}matique 48 (1987), 1--141.

\bibitem{N}
B.H. Neumann,
\textit{Groups covered by permutable subsets}, J. London Math. Soc. (1954), no. 2, 236--248.

\bibitem{Roh67}
V. A. Rokhlin,
\textit{Lectures on the entropy theory of transformations with invariant measure}, Uspehi Mat. Nauk 22 (1967), no. 5, 3--56.

\bibitem{Ros88}
A. Rosenthal,
\textit{Finite uniform generators for ergodic, finite entropy, free actions of amenable groups}, Prob. Th. Rel. Fields 77 (1988), 147--166.

\bibitem{RW00}
D. J. Rudolph and B. Weiss,
\textit{Entropy and mixing for amenable group actions}, Annals of Mathematics (151) 2000, no. 2, 1119--1150.

\bibitem{S12}
B. Seward,
\textit{Ergodic actions of countable groups and finite generating partitions}, Groups, Geometry, and Dynamics 9 (2015), no. 3, 793--810.

\bibitem{S13}
B. Seward,
\textit{Every action of a non-amenable group is the factor of a small action}, Journal of Modern Dynamics 8 (2014), no. 2, 251--270.

\bibitem{S14a}
B. Seward,
\textit{Krieger's finite generator theorem for actions of countable groups II}, preprint. http://arxiv.org/abs/1501.03367.

\bibitem{S16}
B. Seward,
\textit{Weak containment and Rokhlin entropy}, preprint. http://arxiv.org/abs/1602.06680.

\bibitem{S16a}
B. Seward,
\textit{Positive entropy actions of countable groups factor onto Bernoulli shifts}, preprint. https://arxiv.org/abs/1804.05269.

\bibitem{ST14}
B. Seward and R. D. Tucker-Drob,
\textit{Borel structurability on the $2$-shift of a countable group}, Annals of Pure and Applied Logic 167 (2016), no. 1, 1--21.

\bibitem{St75}
A. M. Stepin,
\textit{Bernoulli shifts on groups}, Dokl. Akad. Nauk SSSR 223 (1975), no. 2, 300--302.

\bibitem{Su83}
\v{S}. \v{S}ujan,
\textit{Generators for amenable group actions}, Mh. Math. 95 (1983), no. 1, 67--79.

\end{document}